\newcommand{\bfu}{\boldsymbol{u}}
\newcommand{\bff}{\boldsymbol{f}}
\newcommand{\bfd}{\boldsymbol{d}}
\newcommand{\bfv}{\boldsymbol{v}}
\newcommand{\BonA}{\frac{B}{A}}
\newcommand{\afunc}{a}
\newcommand{\bfunc}{b}
\newcommand{\gfunc}{g}
\newcommand{\hfunc}{h}
\newcommand{\gcoef}{\mathfrak{g}}
\newcommand{\rorel}{\sigma}
\newcommand{\romean}{\rho_0}
\newcommand{\cmean}{c_0}
\newcommand{\Xf}{{X_{\bff}}}
\newcommand{\Xu}{{X_{\bfu}}}
\newcommand{\Xrho}{{X_{\rorel}}}
\newcommand{\XItp}{X_{\It p}}
\newcommand{\ma}{\text{(ma)}\ }
\newcommand{\pd}{\text{(pd)}\ }
\newcommand{\dS}{\, \textup{d}S}
\newcommand{\ds}{\, \textup{d}s}
\newcommand{\dx}{\, \textup{d}x}
\newcommand{\Hone}{H^1(\Omega)}
\newcommand{\Ltwo}{L^2(\Omega)}
\newcommand{\Ltwod}{L^2(\Omega)^d}
\newcommand{\Linf}{L^\infty(\Omega)}
\newcommand{\ddt}{\frac{\textup{d}}{\textup{d}t}}
\newcommand{\intO}{\int_\Omega}
\newcommand{\bfuzero}{\boldsymbol{u}_0}
\newcommand{\rorelzero}{\rorel_0}
\definecolor{grey}{rgb}{0.5,0.5,0.5}
\definecolor{brown}{rgb}{0.7,0.7,0.}
\def\It{\textup{I}_t}
\def\wtildeL{\widetilde{L}}
\def\pOmega{\partial \Omega}
\def\Xsigma{X_\sigma}
\def\Wn{W^n}
\def\N{\mathbb{N}}
\def\sigman{\sigma^n}
\def\sigmant{\sigma^{n}_t}
\def\rorelnt{\sigmant}
\def\pn{p^n}
\def\bfun{\bfu^n}
\def\bfunt{\bfu^{n}_t}
\def\xisigma{\xi^\sigma}
\def\xip{\xi^p}
\def\R{\mathbb{R}}
\def\roreln{\rorel^n}
\def\bfunzero{\bfu^n_0}
\def\wi{w_i}
\def\romeaninv{\frac{1}{\romean}}
\definecolor{darkgreen}{rgb}{0,0.5,0}
\def\Xumu{X^\mu_{\bfu}}
\def\solspacereg{\mathcal{X}^\mu}
\def\Hdiv{H(\text{div};\Omega)}
\def\LinftLinf{L_t^\infty(\Linf)}
\def\LinfLinf{L^\infty(\Linf)}
\def\LtwoTHone{L^2(0,T; \Hone)}
\def\LtwoLtwo{L^2(L^2(\Omega))}
\def\LtwoTLtwo{L^2(0,T; L^2(\Omega))}
\def\LoneLtwo{L^1(\Ltwo)}
\def\LinftLtwo{L^\infty_t(\Ltwo)}
\def\LtwotHone{L^2_t(\Hone)}
\def\LinfTLtwo{L^\infty(0,T; \Ltwo)}
\def\LoneLtwo{L^1(\Ltwo)}
\def\LtwotLtwo{L^2_t(\Ltwo)}
\def\Lsix{L^6(\Omega)}
\def\Wonethree{W^{1,3}(\Omega)}
\def\Lthree{L^3(\Omega)}
\def\LinfLtwo{L^\infty(\Ltwo)}
\def\Lsix{L^6(\Omega)}
\def\LtwotLsix{L^2_t(\Lsix)}
\def\LinftLthree{L^\infty_t(\Lthree)}
\def\LtwotLthree{L^2_t(\Lthree)}
\def\HoneTLtwod{H^1(0,T; \Ltwod)}
\def\LinfHdiv{L^\infty(\Hdiv)}
\def\LinfTHdiv{L^\infty(0,T; \Hdiv)}
\def\LtwoTLtwod{L^2(0,T; \Ltwod)}
\def\Hytwo{H^{\frac{y}{2}}(\Omega)}
\def\Hyplusonetwo{H^{\frac{y+1}{2}}(\Omega)}
\def\HoneLtwo{H^1(\Ltwo)}
\def\LinfnLtwo{L^\infty(0,\Tn; \Ltwo)}
\def\LinfnLthree{L^\infty(0,\Tn; \Lthree)}
\def\LtwonLsix{L^2(0,\Tn; \Lsix)}
\def\LtwonLthree{L^2(0,\Tn; \Lthree)}
\def\Lfour{L^4(\Omega)}
\def\LtwotLinf{L^2_t(\Linf)}
\def\LtwotHyplusonetwo{L^2_t(\Hyplusonetwo)}
\def\LtwotHytwo{L^2_t(\Hytwo)}
\def\LinftHyplusonetwo{L^\infty_t(\Hyplusonetwo)}
\def\LonetLtwo{L^1_t(\Ltwo)}
\def\LinfHyplusonetwo{L^\infty(\Hyplusonetwo)}
\def\Htwo{H^2(\Omega)}
\def\Hyplusonehalf{H^{\frac{y+1}{2}}(\Omega)}
\def\Hyhalf{H^{\frac{y}{2}}(\Omega)}
\def\LinfHyplusonehalf{L^\infty(\Hyplusonehalf)}
\def\LtwoHyhalf{L^2(\Hyhalf)}
\def\originalL{\wtildeL}
\def\modL{L}
\def\rorelt{\rorel_t}
\def\rhs{\textup{rhs}}
\def\divbfu{\nabla \cdot \bfu}
\def\ulacoef{\underline{\afunc}}
\def\olacoef{\overline{\afunc}}
\def\ulafunc{\underline{\afunc}}
\def\ulbfunc{\underline{\bfunc}}
\def\olafunc{\overline{\afunc}}
\def\olbfunc{\overline{\bfunc}}
\def\rorelnzero{\roreln_{0}}
\def\bfunzero{\bfun_{0}}
\def\xin{\xi^n}
\def\momu{(\text{mo}^\mu) }
\def\moG{(\text{mo}^{\text{G}}) }
\def\maG{(\text{ma}^{\text{G}}) }
\def\pdG{(\text{pd}^{\text{G}}) }
\def\Deltaromean{\Delta_{1/\romean}}
\def\xinsigma{\xi^{\sigma, n}}
\def\xinp{\xi^{p, n}}
\def\divbfun{\nabla \cdot \bfu^n}
\def\projection{\textup{P}^{\romean}_{\Wn}}
\def\gprojection{\textup{P}^{\romean}_{\Wn}\gfunc}
\def\gcoefprojection{\textup{P}^{\romean}_{\Wn}\gcoef}
\def\intTO{\int_0^T \int_{\Omega}}
\def\dxt{\, \textup{d}x\textup{d}t}
\def\calT{\mathcal{T}}
\def\pnstar{\pn_*}
\def\rorelnstar{\roreln_*}
\def\ball{B}
\def\rorelnone{\rorel^{n,(1)}}
\def\rorelntwo{\rorel^{n,(2)}}
\def\rorelnstarone{\rorel_*^{n,(1)}}
\def\rorelnstartwo{\rorel_*^{n,(2)}}
\def\pnone{p^{n,(1)}}
\def\pntwo{p^{n,(2)}}
\def\pnstarone{p_*^{n,(1)}}
\def\pnstartwo{p_*^{n,(2)}}
\DeclareMathOperator*{\esssup}{ess\,sup}
\def\Tn{T_n}
\def\bfvn{\bfv^n}
\def\calL{\mathcal{L}}
\def\rhsone{\rhs_1}
\def\rhstwo{\rhs_2}
\def\intt{\int_0^t}
\def\calE{\mathcal{E}}
\def\calD{\mathcal{D}}
\def\eps{\varepsilon}
\def\Xromean{X_{\romean}}
\def\XBonA{X_{B/A}}
\def\Xcmean{X_{\cmean}}
\def\Is{\textup{I}_s}
\def\Gronwall{Gr\"onwall}
\def\deltaromeancmean{\delta_{\romean, \cmean}}
\def\oneoverfoureps{\frac{1}{4\eps}}
\def\oneovertwoeps{\frac{1}{2\eps}}
\def\intT{\int_0^T}
\def\dt{\, \textup{d}t}
\def\bfumuzero{\bfu^{\mu=0}}
\def\divbfumuzero{\nabla \cdot \bfumuzero}
\def\pmuzero{p^{\mu=0}}
\def\rorelmuzero{\rorel^{\mu=0}}
\def\calX{\mathcal{X}}
\def\ororelnstar{\overline{\rorel}_*^n}
\def\ororeln{\overline{\rorel}^n}
\def\opnstar{\overline{p}_*^n}
\def\opn{\overline{p}^n}
\def\bfdzero{\bfd_0}
\begin{document}

\title{Existence of solutions to k-Wave models of nonlinear ultrasound propagation in biological tissue}

\author[1]{Ben Cox}

\author[2]{Barbara Kaltenbacher}

\author[3]{Vanja Nikoli\'c}

\author[4]{Felix Lucka}

\authormark{}

\address[1]{\orgdiv{Department of Medical Physics and Biomedical Engineering}, \orgname{University College
		London}, \orgaddress{London, WC1E 6BT, United Kingdom}}

\address[2]{\orgdiv{Department of Mathematics}, \orgname{Alpen-Adria-Universit\"at Klagenfurt}, \orgaddress{Universit\"atsstra\ss e 65--67, A-9020 Klagenfurt, Austria}}

\address[3]{\orgdiv{Department of Mathematics}, \orgname{Radboud University}, \orgaddress{Heyendaalseweg 135, 6525 AJ Nijmegen, The Netherlands}}

\address[4]{\orgdiv{Centrum Wiskunde \& Informatica}, \orgaddress{Science Park 123, Amsterdam, The Netherlands}}

\corres{*Vanja Nikoli\'c, Radboud University, Heyendaalseweg 135, 6525 AJ Nijmegen, The Netherlands. \email{vanja.nikolic@ru.nl}}

%\presentaddress{This is sample for present address text this is sample for present address text}

\abstract[Summary]
{We investigate models for nonlinear ultrasound propagation in soft biological tissue based on the one that serves as the core for the software package k-Wave. The systems are solved for the acoustic particle velocity, mass density, and acoustic pressure and involve a fractional absorption operator. We first consider a system that incorporates additional viscosity in the equation for momentum conservation. By constructing a Galerkin approximation procedure, we prove the local existence of its solutions. In view of inverse problems arising from imaging tasks, the theory allows for the variable background mass density, speed of sound, and the nonlinearity parameter in the systems. Secondly, under stronger conditions on the data, we take the vanishing viscosity limit of the problem, thereby rigorously establishing the existence of solutions for the limiting system as well. 

}
\keywords{ultrasound modeling, k-Wave, local existence, fractional Laplacian}

\maketitle

%%%%%%%%%%%%%%%%%%%

\section{Introduction}

Ultrasound waves propagating in soft biological tissue, even at the intensities used in biomedical imaging applications, can undergo noticeable nonlinear distortion. At higher intensities still, such as are used in therapeutic medical applications, the effect of the nonlinearities can be very significant. Several scientific software packages have therefore been developed for modelling nonlinear propagation in biological tissue~\cite{list_of_nonlinear_solvers}. Here, the system of equations that are the basis for one of those packages, k-Wave~\cite{TreebyJarosRendellCox2012ModelingNU,kwave}, will be analysed. It is given in terms of the acoustic particle velocity $\bfu$, mass density $\rho$, and acoustic pressure $p$ by the following set of equations:
\begin{equation} \label{original system}
	\begin{aligned}
		&\text{linear momentum conservation: }&& \romean \bfu_t+\nabla p =\bff,\\
		&\text{mass conservation: }	 && \rho_t + (2\rho+\romean )\, \nabla\cdot\bfu +\bfu\cdot\nabla\romean = 0,\\
		&\text{pressure-density relation: } && p- \cmean^2\Bigl(\rho +\bfd\cdot\nabla\romean + \frac{B}{2A} \frac{\rho^2}{\romean}
		-\originalL\rho\Bigr)=0,
	\end{aligned}
\end{equation}
where $\bfu=\bfd_t$; see \cite[system (10)]{TreebyJarosRendellCox2012ModelingNU} and \cite[system (1)]{JarosRendellTreeby2016}. The operator $\originalL$ accounts for absorption and dispersion. It is defined by
\begin{equation}\label{L_JRT16}
	\originalL\rho= 2\alpha_0 \left(-\cmean^{y-1}(-\Delta)^{\frac{y}{2}-1} \rho_t 
	+\cmean^y \tan \left(\frac{\pi y}{2}\right) (-\Delta)^{\frac{y+1}{2}-1} \rho \right)
\end{equation}
with $y\in(1,3)$ and $\alpha_0>0$; see \cite[eq. (3)]{JarosRendellTreeby2016}. In human tissue typically $y\in(1,2]$. The quantities $\romean$,  $\cmean$, and $\BonA$ in this system are the background mass density, isentropic sound speed, and  nonlinearity parameter, respectively. \\
\indent In k-Wave these equations are discretised using a pseudo-spectral time domain (PSTD) time-stepping scheme with a dispersion correcting factor applied in the spatial Fourier domain. The particular form of the absorption/dispersion term in \eqref{original system} was chosen both because the resulting absorption depends on frequency according to a power law, as empirically observed in many tissue types, and because it is memory-efficient when implemented using a PSTD scheme.

\subsection{Numerical example}

In the spirit of motivation for the study of the system \eqref{original system}, a simple numerical example, computed using k-Wave, will be given here. With ultrasound tomography in mind, this example shows that for a fixed number of sources and detectors more independent data can be obtained when nonlinear effects are included than in the linear case. Specifically, inspecting the singular value spectrum of a set of simulated measurements, shows that when pairs of sources are used simultaneously in the nonlinear regime, the resulting measured signals are not just linear combinations of the signals measured with the individual sources alone, as they are in the linear case. Fig.\ \ref{figure1}(left) shows a ring array of 8 equally-spaced transducer elements that all act as detectors, and 4 of which (shown in white) also act as sources, surrounding a region with a heterogeneous sound speed\cite{lou2017generation}.
All other material properties were chosen to be homogeneous: mass density $\rho_0 = 1000$ kg/m$^3$, absorption coefficient $\alpha = \alpha_0f^y$ where $\alpha_0 = 0.5$ dB/cm/MHz$^y$, $y = 1.5$, $f = 0.25$ MHz is the frequency, $B/A = 7$ is the acoustic nonlinearity parameter, and the source acoustic pressure is $5$ MPa. Simulations were conducted in both the nonlinear and linear regimes (ie. no nonlinear terms included in the equations, equivalent to using a low source amplitude source). For each simulation, the transducers acting as sources were driven with a single-frequency sinusoidal wave, and acoustic pressure time series were detected at all other transducers. When acting as a source, a transducer does not also act as a detector, so in both the linear and nonlinear cases, 64 time series were measured: 28 using single sources (4 sources x 7 detectors), and 36 using pairs of sources driven simultaneously (6 pairs of sources x 6 detectors). Fig.\ \ref{figure1}(right) shows a snapshot of the acoustic pressure field emitted from the leftmost transducer. Fig.\ \ref{figure2}(left) shows examples of measured time series in both the linear and nonlinear cases, showing characteristic wave steepening due to the nonlinearity increasing the wave speed at the peaks of the wave and decreasing it at the troughs. All 64 time series measured in the linear case were stacked into a matrix and the singular values of that data matrix were computed. This was also done in the nonlinear case. The singular value spectra, normalised to the largest singular value, are plotted in Fig.\ \ref{figure2}(right). The cliff-edge after the 28th singular value in the linear case indicates that the data obtained using pairs of sources are merely linear combinations of the data obtained using single sources. This is not the case in the nonlinear regime. While this example may be interesting, we note that it does not prove - or indicate the extent to which - the data carries additional information about the material properties, the estimation of which is the ultimate goal of ultrasound tomography. 

\begin{figure}[h]
	\includegraphics[width=0.5\textwidth]{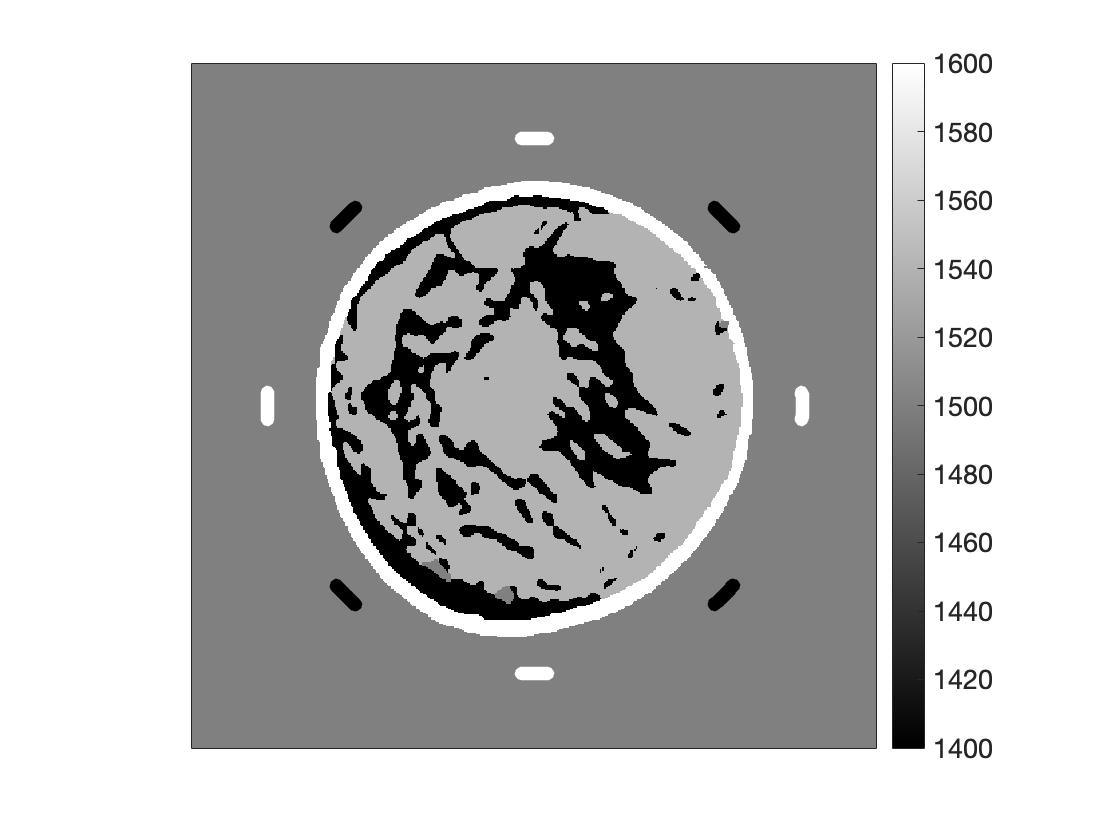}
	\includegraphics[width=0.5\textwidth]{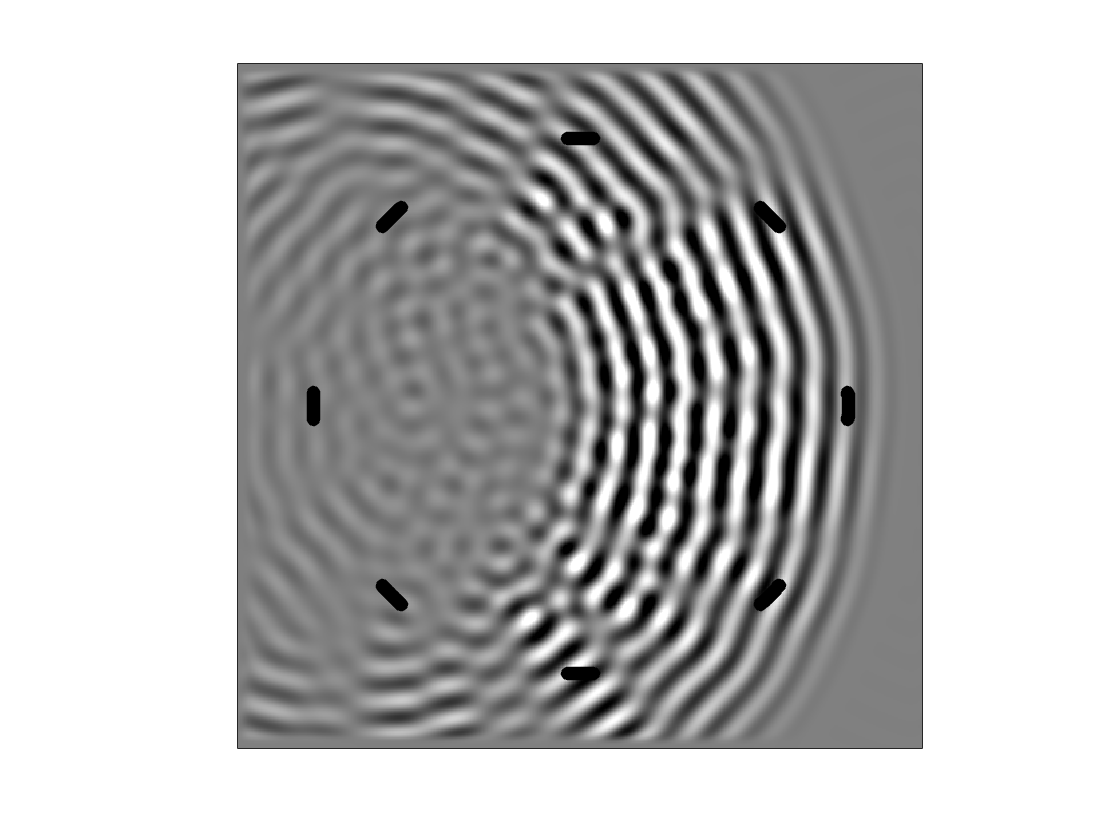}
	\caption{\label{figure1} Left: Set-up of the numerical example, showing the sound speed map (m/s) and the positions of the transducers (white: sources and detectors, black: detectors only). Right: Snapshot of the field from the leftmost transducer acting as a source.}
\end{figure}

\begin{figure}[h]
	\includegraphics[width=0.5\textwidth]{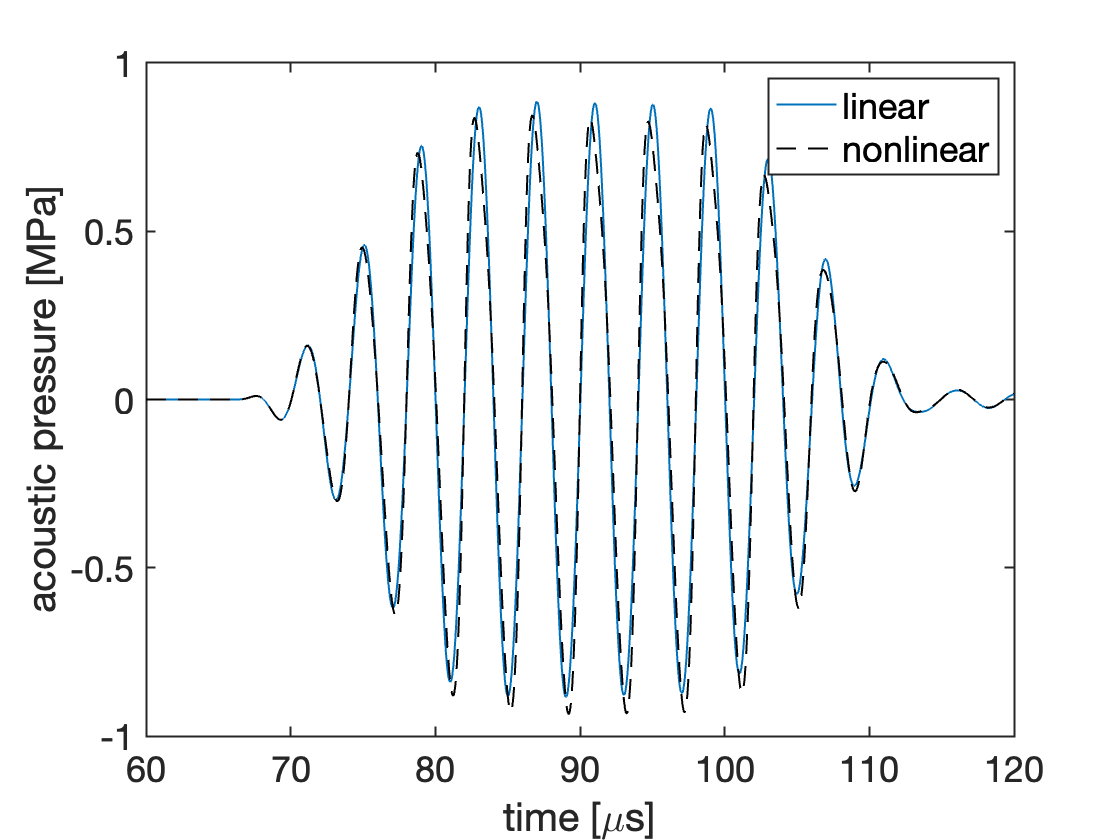}
	\includegraphics[width=0.5\textwidth]{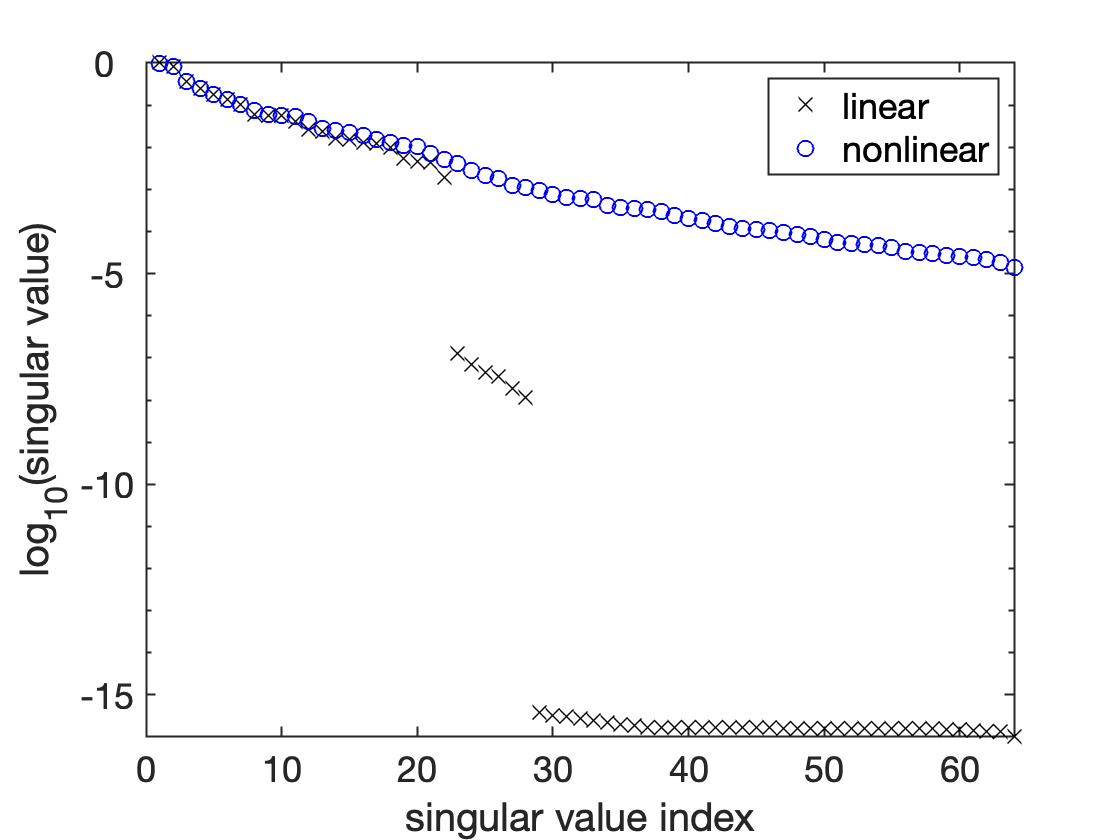}
	\caption{\label{figure2} Left: Examples of linear and nonlinear time series. Right: Singular value spectrum of the linear and nonlinear data.}
\end{figure}

\newpage
\subsection{Main contributions}

The main aim of this work is to gain rigorous understanding of the systems of the form in \eqref{original system} with possible additional viscosity included in the momentum balance equation.  In view of inverse problems arising from imaging tasks, we are particularly interested in allowing $\BonA$, $\cmean$, and $\romean$ in \eqref{original system} to depend on $x$ in this order of importance, that is, the simplification $\romean\equiv const.$ is the least restrictive one. With this in mind, we can rewrite the mass conservation in terms of
$\rorel=\dfrac{\rho}{\romean}$
as follows:
\[
\rorelt +(1+2\rorel) \nabla \cdot \bfu+ \bfu \cdot \nabla \ln \romean=0.
\]
In the analysis, we supplement the system with the following boundary conditions:
\begin{equation} \label{balance-state bc}
	\nu\cdot\bfu=0, \qquad 
	\nu\cdot\nabla \rorel=0 \
	\text{ on }\pOmega,
\end{equation}
where $\nu$ is the outer unit normal vector at the boundary $\pOmega$, as well as the initial velocity and density data
\begin{equation} \label{balance-state ic}
	\bfu(0)=\bfuzero, \qquad 	\bfd(0)=\bfdzero, \qquad
	\rorel(0)=\rorelzero.
\end{equation}
Then $\bfd = \It \bfu + \bfdzero$, where $\It \bfu=\int_0^t \bfu(s)\ds$ for $t \in [0,T]$. By taking into account a viscosity term in the momentum balance in \eqref{original system}
and rearranging the terms, we arrive at the following system for $(\bfu, \sigma, p)$: 
\begin{equation}\label{regularized balance-state}
	\left \{ \ \begin{aligned}                                                                                                                                                                                                                                                                                                                                                                                                                                                                                                                                                                                                                                                                                                                        
		\momu \qquad &\romean \bfu_t+\nabla p -\mu\nabla(\nabla\cdot\bfu) 
		=\bff,\\
		\ma \qquad &\rorelt + \afunc(\rorel)\nabla \cdot \bfu  =-\bfu \cdot \nabla \ln \romean: = g(\bfu) ,
		\\
		\pd \qquad &p - \cmean^2\romean\bfunc(\rorel)\,\rorel + \modL \rorel=\cmean^2\bfd\cdot\nabla\romean=\cmean^2(\It \bfu+\bfdzero)\cdot\nabla\romean:=\hfunc(\bfu),
	\end{aligned} \right.
\end{equation}
with a \emph{modified} absorption operator
\begin{equation}\label{L_almostoriginal}
	\modL\rorel=-2\alpha_0 
	(-\Deltaromean)^{-1}
	\left[\tau(-\Delta)^{\frac{y}{2}} \rorel_t 
	+\eta (-\Delta)^{\frac{y+1}{2}} \rorel\right], \quad \tau, \eta>0,
\end{equation}
\footnote{Note that our analysis could also handle 
	the choice 
	\eqref{L_JRT16},
	however at the cost of involving higher order commutators of the coefficients $\romean$ and $\cmean$ and thus having to impose higher smoothness on them.}
where	 $-\Delta=-\Delta_N$ denotes the homogeneous Neumann-Laplacian and
 \begin{align}
	\Deltaromean v:=\nabla\cdot(\frac{1}{\romean}\nabla v).
\end{align}
We have introduced the following short-hand notation in \eqref{regularized balance-state}: 
\begin{equation}\label{def a b}
	\begin{aligned}
		\afunc(\rorel)=1+2\rorel, \quad
		\bfunc(\rorel)=1+\frac{B}{2A} \rorel
	\end{aligned}
\end{equation}
and
\begin{equation}\label{def gfunc hfunc}
	\begin{aligned}
		g(\bfu) =-\bfu \cdot \nabla \ln \romean, \qquad \hfunc(\bfu) =\cmean^2\bfd\cdot\nabla\romean=\cmean^2(\It \bfu+\bfdzero)\cdot\nabla\romean.
	\end{aligned}
\end{equation}
We have chosen $-\mu\nabla(\nabla\cdot\bfu)$ for the viscosity term with $\mu > 0$, (which is equal to $- \mu\Delta \bfu$ for irrotational $\bfu$) since it allows us to make use of cancellations below without having to impose the equation $\nabla\times\bfu=0$ as a further PDE. \\
\indent The main contributions of the remaining of the work pertain to the analysis of the system in \eqref{regularized balance-state}; in particular, we establish existence of its solutions in Theorem~\ref{theorem wellposedness regularized problem} using a Galerkin-based framework. Additionally, under the assumption that $\gfunc=\hfunc=0$, we conduct analysis in the vanishing viscosity limit $\mu \searrow 0$ as a way of relating system \eqref{regularized balance-state} to system \eqref{original system} with the absorption operator \eqref{L_almostoriginal}. This result is contained in Theorem~\ref{theorem wellposedness  mu ero} below. \\
\indent To the best of our knowledge, systems of the form in \eqref{regularized balance-state} with fractional absorption have not been studied so far in a rigorous manner. In contrast, rigorous techniques for single-equations models in nonlinear acoustics, such as the Westervelt or Kuznetsov equation, are by now pretty well-established; see, for example,~\cite{kaltenbacher2009global, acosta2022nonlinear, mizohata1993global, kaltenbacher2022inverse, Dekkers2020} and the review paper~\cite{kaltenbacher2015mathematics}. Analysis of a local compressible Navier--Stokes system governing nonlinear sound motion can be found in~\cite{tani2017mathematical}; see also~\cite{matsumura1980initial} and the references contained therein.  
\subsection*{Notation} Below we occasionally use $x \lesssim y$ for $x\leq C y$, where $C>0$ is a generic constant that does not depend on the Galerkin discretization parameter. We use subscript $t$ to denote the temporal domain $(0,t)$ in Bochner spaces, where $t$ is taken from a certain time interval to be specified; for example, $\|\cdot\|_{L^p_t(L^q(\Omega))}$ denotes the norm on $L^p(0,t; L^q(\Omega))$. If the subscript is omitted, the temporal domain is meant to be $(0,T)$.
%%%%%%%%%%%%%%%%%%%

\section{Existence of solutions} \label{Sec:Analysis}

In this section, we provide the proof of existence of solutions of \eqref{regularized balance-state} with boundary and initial data given in \eqref{balance-state bc} and \eqref{balance-state ic}, respectively. We first set the notion of the solution, where equations $\momu$ \ and $\pd$ will be understood in a time-integrated sense. More precisely, the solution space for the velocity is
\begin{equation} \label{def Xumu}
	\Xumu = \left\{\bfu\in \LinfTHdiv: \ \sqrt{\mu}\|\nabla(\nabla\cdot\bfu)\|_{\LtwoLtwo}<\infty, \quad \bfu \cdot \nu=0 \text{ on }\partial \Omega \right\},
\end{equation}
endowed with the norm
\begin{equation}
	\|\bfu\|_{\Xumu} = \left\{ \|\divbfu\|_{\LinfLtwo}^2+ \mu \|\nabla(\nabla\cdot\bfu)\|^2_{\LtwoLtwo} \right\}^{1/2}.
\end{equation}
Further, the solution space for the relative density is
\begin{equation} \label{def Xromean}
	\Xrho = \left \{\rorel \in H^1(0,T;H^{\frac{y}{2}}(\Omega))\cap L^\infty(0,T;H^{\frac{y+1}{2}}(\Omega)):\, \nabla\rorel \cdot \nu=0 \text{ on }\partial \Omega\right\},\ y>d-1,\ \, 2\leq y\leq 3,
\end{equation}
with the norm
\begin{equation}
	\begin{aligned}
		\|\rorel\|_{\Xrho} = \left\{ \|\rorel\|_{\LinfHyplusonehalf}^2+ \|\rorelt\|_{\LtwoHyhalf}\right\}^{1/2}.
	\end{aligned}
\end{equation}
The assumptions made on $y$ will be justified in the course of deriving energy estimates; see the discussion at the beginning of Section~\ref{sec energy estimate}. We note that the condition $y \leq 3$ can be removed if  $\gfunc \equiv 0$. The setting $\gfunc=\hfunc \equiv 0$ is considered in Section~\ref{sec vanishing viscosity}. \\
\indent Thirdly, as we will prove existence of the time-integrated pressure $\It p$, we introduce the corresponding solution space as
\begin{equation} \label{def XItp}
	\XItp = \left\{ \It p = \int_0^t p(s)\ds \in \LtwoTHone : \, \nabla p \cdot \nu = 0 \text{ on }\partial\Omega,\quad \frac{1}{|\Omega|}\int_{\Omega} p\dx=0 \right \}.
\end{equation}
 The targeted solution space for the studied problem is then $\solspacereg =  \Xumu \times \Xrho \times \XItp$. \\

\noindent{\bf Assumptions on data}. We assume that the source term satisfies
\begin{equation} \label{Xf}
	\bff\in \Xf=L^1(0,T;H_0(\text{div};\Omega))\cap L^2(0,T; \Ltwo),
\end{equation}
where $H_0(\text{div};\Omega)=\left\{\bfv\in L^2(\Omega) : \, \nabla\cdot\bfv=0 \text{ in }\Omega, \ 
\nu\cdot\bfv=0\text{ on }\partial\Omega \right\}$. The initial conditions are assumed to satisfy 
\[
(\bfuzero, \bfdzero, \rorelzero) \in \Hdiv \times \left(\Linf \cap \Hone\right) \times \Hyplusonetwo.
\]
 Additionally, we assume that 
 \begin{equation} \label{X BA}
 B/A \in \XBonA=\Linf \cap W^{1,3}(\Omega)
 \end{equation}
 and
\begin{equation}\label{Xromean}
	\begin{aligned}
		\romean \in \Xromean = \left\{v\in\Linf\ : \ \frac{1}{v}\in \Linf, \ \nabla\ln v\in \Linf\cap H^{\frac{y+1}{2}}(\Omega)\right\}
	\end{aligned}
\end{equation}
as well as that
\begin{equation}\label{Xcmean}
	\begin{aligned}
		\cmean^2 \in \Xcmean = \left \{v\in\Linf\cap W^{1,3}(\Omega)\ : \ \frac{1}{v}\in \Linf \right \}.
	\end{aligned}
\end{equation}
We next make precise what is meant by a solution of the problem.
\begin{definition}\label{def solution}We call  $(\bfu, \sigma, p) \in \solspacereg$ a solution of problem \eqref{regularized balance-state} supplemented with boundary \eqref{balance-state bc}  and initial conditions \eqref{balance-state ic} if it satisfies
	\begin{equation}
		\begin{aligned}
			\begin{multlined}[t]		\intTO \Bigl\{\left(\romean(\bfu-\bfuzero)+\nabla\It p - \mu \nabla (\divbfu) - \It \bff\right) \cdot \bfv 
				+\left( \rorelt + \afunc(\rorel) \divbfu - \gfunc(\bfu)\right) v 
				\\+	\left(\It p - \cmean^2 \romean \It (\bfunc(\rorel) \rorel)- \It \hfunc(\bfu) \right) \Deltaromean\phi+2\alpha_0 \bigl(\tau(-\Delta)^{\frac{y}{4}} (\rorel-\rorelzero) (-\Delta)^{\frac{y}{4}} \phi
				+\eta (-\Delta)^{\frac{y+1}{4}} \It \rorel (-\Delta)^{\frac{y+1}{4}} \phi \bigr) \Bigr\}  \dxt =0 \end{multlined}
		\end{aligned}
	\end{equation}
	for all $\bfv \in \LtwoTLtwod$, $v \in \LtwoTLtwo$, and $\phi \in L^2(0,T; \Hyplusonehalf)$, such that $\nabla \phi \cdot \nu =0$, with $\rorel_{\vert t=0} = \rorelzero$.
\end{definition}
The proof of existence of solutions is set up through a Faedo--Galerkin procedure. To this end, we first need to construct suitable approximations of $(\bfu, \rorel, p)$.

%%%%%%%%%%%%%%%%%%%%%%%%%%%%%%%%%%%%%%%%
\subsection{Construction of Galerkin approximations} 

We approximate the system in \eqref{regularized balance-state} by constructing a Galerkin approximation of $(\rorel, p)$ by means of smooth eigenfunctions of the Neumann-Laplacian and then using it to set up suitable approximations of $\bfu$. This approach is in the spirit of Galerkin strategies for models of viscous compressible fluids; see~\cite{feireisl2006navier, feireisl2004dynamics, jungel2010global} and the references provided therein. However, here the relative density $\rorel$ and acoustic pressure $p$ are directly approximated by means of suitable basis functions as opposed to the velocity $\bfu$.\\
 \indent Let $\{w_i\}_{i \geq 1}$ be the eigenfunctions of the Neumann-Laplacian operator $-\Deltaromean$ acting on functions with zero mean, with eigenvalues $\{\lambda_i\}_{i \geq 1}$; that is, let
\begin{equation}
	\left \{
	\begin{aligned}
		&- \Deltaromean w_i=\lambda_i w_i \ &&\text{ in } \Omega, \\
		& \frac{1}{|\Omega|} \intO \wi \dx =0,&& \\
		&\nabla w_i \cdot \nu =0 \ &&\text{ on } \partial\Omega.
	\end{aligned} \right.
\end{equation}
 Fix $n \in \N$ and let $\Wn=\text{span} \{w_1, \ldots, w_n\}$.  
 We seek approximate $\rorel$ and $p$ in the form of
\begin{equation} \label{def Galerkin approximations}
	\begin{aligned}
		\sigman= \displaystyle \sum_{i=1}^n \xinsigma_i(t)w_i(x),\quad 		\pn=  \displaystyle \sum_{i=1}^n \xinp_i(t)w_i(x), 
	\end{aligned}
\end{equation}
with the unknown time-dependent coefficients $\xinsigma_i$, $\xinp_i:[0,T] \rightarrow \R$ for $i \in [1,n]$.  
Let the approximate initial relative density $\rorelnzero$  be the $\Hyplusonetwo$
projection of $\rorelzero$ on $\Wn$.
Denote $\boldsymbol{\xi}^n=[\xin_1 \, \ldots \, \xin_n]^T$ and $\boldsymbol{\xi}^n_0 = \boldsymbol{\xi}^n(0)$.\\
\indent  We then set $\bfun$ as the solution of the following system:
\begin{equation} \label{Galerkin approximate system}
\left\{	\begin{aligned}
	&\ \moG \quad &&\romean \bfun_{t} -\mu\nabla(\nabla \cdot \bfun)+ \nabla \pn =  \bff \quad &&\text{in } \Omega \times (0,T), \ &&\bfun(0) = \bfunzero,  \quad \nabla \bfun  \cdot \nu=0, \\[1mm]
		&\ \maG \quad   &&\rorelnt + \afunc(\roreln) \nabla\cdot\bfun -\gfunc(\bfun)=0 \quad  &&\text{in } \Wn \times (0,T), \ &&\roreln(0)= \rorelnzero, \\[1mm]
	&\ \pdG \quad &&\pn = \cmean^2 \romean \bfunc(\roreln)\roreln-\modL \roreln + \hfunc(\bfun) \quad  &&\text{in } \Wn \times (0,T),
	\end{aligned} \right.
\end{equation}
where $\gfunc(\bfun)=-\bfun \cdot \nabla \ln \romean$ and $\hfunc(\bfun)=\cmean^2 (\It \bfun + \bfdzero) \cdot \nabla \romean$; cf.\ \eqref{def gfunc hfunc}.  By considering $\maG$ and $\pdG$ in $\Wn$ we mean that we project them onto the finite dimensional space $\Wn$ with respect to the $L^2(\Omega)$ inner product. For showing that this approximation of \eqref{regularized balance-state} is well-posed, we need the following auxiliary existence result.
\begin{lemma} \label{lemma bfun}
Let $\mu>0$, $\romean,\,\romeaninv \in \Linf$ 
%with $\romeaninv \geq \frac{1}{\ulromean}>0$,  
and $\bff \in \LtwoTLtwod$.  Let  $\bfunzero \in \Hdiv$.
Then, given $\pn \in L^2(0,T; \Wn)$, there exists a unique $\bfun \in \Xumu \cap \HoneTLtwod$ that satisfies
\begin{equation}\label{PDE_Lem1}
	\left\{	\begin{aligned}
		&  \bfun_{t} -\mu \romeaninv\nabla(\nabla \cdot \bfun) = \romeaninv (\bff-\nabla \pn),\\
		&  \bfun(0) = \bfunzero,  \quad
		\bfun \cdot \nu =0.
	\end{aligned} \right.
\end{equation}
\end{lemma}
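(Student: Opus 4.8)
The plan is to read the linear problem \eqref{PDE_Lem1} as an abstract parabolic Cauchy problem and to solve it by a (second, inner) Faedo--Galerkin argument, using the symmetry of the grad--div term rather than any ellipticity, which here fails. I work in the Hilbert space $\boldsymbol{V}=\{\bfv\in\Hdiv:\ \bfv\cdot\nu=0\text{ on }\partial\Omega\}$ with the $\Hdiv$ norm, and I endow $\Ltwod$ with the inner product $\langle\bfu,\bfv\rangle_{\romean}:=\intO\romean\,\bfu\cdot\bfv\dx$ and its induced norm $\|\cdot\|_{\romean}$, which are equivalent to the standard ones since $\romean,\romeaninv\in\Linf$. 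Testing \eqref{PDE_Lem1} with $\romean\bfv$ and integrating by parts (using $\bfv\cdot\nu=0$) leads to the weak formulation
\begin{equation*}
	\langle\bfun_t,\bfv\rangle_{\romean}+a(\bfun,\bfv)=\langle F,\bfv\rangle_{\romean}\quad\text{for all }\bfv\in\boldsymbol{V},\qquad a(\bfu,\bfv):=\mu\intO(\nabla\cdot\bfu)(\nabla\cdot\bfv)\dx,\quad F:=\romeaninv(\bff-\nabla\pn),
\end{equation*}
where $a$ is a bounded, symmetric, non-negative bilinear form on $\boldsymbol{V}$. Note that $F\in\LtwoTLtwod$: indeed $\bff\in\LtwoTLtwod$ by assumption, while $\pn\in L^2(0,T;\Wn)$ with $\Wn\subset\Hone$ finite-dimensional gives $\nabla\pn\in\LtwoTLtwod$, and $\romeaninv\in\Linf$.

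Next I set up the Galerkin scheme. Fix a linearly independent sequence $\{\bfv_j\}_{j\ge1}\subset\boldsymbol{V}$ with dense span in $\boldsymbol{V}$, let $\boldsymbol{V}_m=\text{span}\{\bfv_1,\dots,\bfv_m\}$, and take the approximate initial datum $\bfun_m(0)=P_m\bfunzero$, the $\boldsymbol{V}$-orthogonal projection of $\bfunzero$ onto $\boldsymbol{V}_m$, so that $\bfun_m(0)\to\bfunzero$ in $\boldsymbol{V}$ and $\|\nabla\cdot\bfun_m(0)\|_{\Ltwo}\le\|\bfunzero\|_{\Hdiv}$ (this step uses the compatibility $\bfunzero\cdot\nu=0$, which is anyway forced by $\bfun\in\Xumu$). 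Since $\langle\cdot,\cdot\rangle_{\romean}$ is an inner product on $\boldsymbol{V}_m$, the finite-dimensional problem $\langle\partial_t\bfun_m,\bfv\rangle_{\romean}+a(\bfun_m,\bfv)=\langle F,\bfv\rangle_{\romean}$, $\bfv\in\boldsymbol{V}_m$, for $\bfun_m=\sum_{j=1}^m c_j(t)\bfv_j$ is a linear ODE system with invertible constant mass matrix and an $L^2(0,T)$ right-hand side, hence uniquely solvable on $[0,T]$ with $\partial_t\bfun_m\in L^2(0,T;\boldsymbol{V}_m)$; in particular $\partial_t\bfun_m$ is an admissible test function. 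Choosing $\bfv=\partial_t\bfun_m$ and using the symmetry of $a$,
\begin{equation*}
	\|\partial_t\bfun_m\|_{\romean}^2+\frac{\mu}{2}\ddt\|\nabla\cdot\bfun_m\|_{\Ltwo}^2=\langle F,\partial_t\bfun_m\rangle_{\romean}\le\tfrac12\|\partial_t\bfun_m\|_{\romean}^2+\tfrac12\|F\|_{\romean}^2 .
\end{equation*}
Absorbing the first term on the right, integrating in $t$, and using $\|\nabla\cdot\bfun_m(0)\|_{\Ltwo}\le\|\bfunzero\|_{\Hdiv}$ bounds $\partial_t\bfun_m$ in $\LtwoTLtwod$ and $\nabla\cdot\bfun_m$ in $\LinfLtwo$ uniformly in $m$ (with a $\mu$-dependent constant); together with $\|\bfun_m(t)\|_{\Ltwo}\le\|\bfun_m(0)\|_{\Ltwo}+\sqrt{T}\,\|\partial_t\bfun_m\|_{\LtwoTLtwod}$, this also bounds $\bfun_m$ in $\LinfTHdiv$.

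Along a subsequence, $\bfun_m\rightharpoonup\bfun$ weakly-$*$ in $\LinfTHdiv$ and $\partial_t\bfun_m\rightharpoonup\partial_t\bfun$ in $\LtwoTLtwod$; since $L^2(0,T;\boldsymbol{V})$ is weakly closed in $L^2(0,T;\Hdiv)$, the limit satisfies $\bfun\cdot\nu=0$. Letting $m\to\infty$ in the Galerkin identity (first for a fixed $\boldsymbol{V}_k$, then by density of $\bigcup_k\boldsymbol{V}_k$ in $\boldsymbol{V}$), $\bfun$ solves the weak formulation for all $\bfv\in\boldsymbol{V}$; testing with $\bfv\in C_c^\infty(\Omega)^d$ then yields $\mu\nabla(\nabla\cdot\bfun)=\romean(\bfun_t-F)$ in $\mathcal{D}'((0,T)\times\Omega)$, whose right-hand side lies in $\LtwoTLtwod$, so dividing by $\romean$ recovers both the pointwise identity in \eqref{PDE_Lem1} and $\sqrt{\mu}\,\nabla(\nabla\cdot\bfun)\in\LtwoLtwo$. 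Hence $\bfun\in\Xumu\cap\HoneTLtwod$, with $\bfun(0)=\bfunzero$ obtained from $\HoneTLtwod\hookrightarrow C([0,T];\Ltwo)$, the convergence $\bfun_m(0)\to\bfunzero$, and the standard integration by parts in time in the Galerkin identity. For uniqueness, the difference $\bfw$ of two solutions solves the homogeneous problem with $\bfw(0)=0$ and, tested with $\bfw$ (admissible since $\bfw,\bfw_t,\nabla(\nabla\cdot\bfw)\in\LtwoLtwo$ and $\bfw\cdot\nu=0$), gives $\tfrac12\ddt\|\bfw\|_{\romean}^2+\mu\|\nabla\cdot\bfw\|_{\Ltwo}^2=0$, whence $\bfw\equiv0$.

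I expect the only structural point worth flagging --- everything else being routine --- to be that the grad--div operator $-\nabla(\nabla\cdot\,\cdot\,)$ is \emph{not} elliptic: its kernel is the entire space of divergence-free fields, so $a$ is merely non-negative, not coercive, on $\boldsymbol{V}$, and the energy estimate cannot be obtained by testing with $\bfun_m$ itself. The remedy is exactly the choice $\bfv=\partial_t\bfun_m$ above, which, thanks to the symmetry of $a$, turns the principal term into a time derivative of $\|\nabla\cdot\bfun_m\|_{\Ltwo}^2$; the control of the $L^2$-part of $\bfun_m$, and hence of its full $\Hdiv$-norm, is then recovered a posteriori from the bound on $\partial_t\bfun_m$. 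The remaining bookkeeping --- carrying the weight $\romean$ through all the inner products --- is harmless because $\romean$ is bounded above and below away from zero, i.e.\ $\romean,\romeaninv\in\Linf$.
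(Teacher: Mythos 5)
Your argument is correct and self-contained; the paper itself gives no proof here, deferring instead to a textbook reference (Salsa, Theorem 9.6, which packages the abstract Faedo--Galerkin theory for linear parabolic problems in a Gelfand triple). What you have written is essentially the content of that citation, unpacked for the specific space $\boldsymbol{V}=\{\bfv\in\Hdiv:\bfv\cdot\nu=0\}$ with the $\romean$-weighted $L^2$ pairing and the grad--div form. The one point you highlight as structural --- testing with $\partial_t\bfun_m$ rather than with $\bfun_m$ --- is indeed what yields precisely the regularity the lemma asserts, namely $\bfun\in\HoneTLtwod$ and $\nabla\cdot\bfun\in\LinfLtwo$, with $\nabla(\nabla\cdot\bfun)\in\LtwoLtwo$ then recovered from the PDE itself. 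A small refinement of your closing remark: the lack of strict coercivity of $a$ on $\boldsymbol{V}$ is not the obstacle to testing with $\bfun_m$, since $a$ does satisfy a G\aa rding inequality ($a(\bfu,\bfu)+\|\bfu\|_{L^2}^2\ge\min(\mu,1)\|\bfu\|_{\Hdiv}^2$) and the basic Lions theorem would still deliver a solution with $\bfun\in L^2(0,T;\boldsymbol{V})$, $\bfun_t\in L^2(0,T;\boldsymbol{V}^*)$; the real reason for the $\partial_t$-test is that the lemma asks for the stronger regularity $\LinfTHdiv\cap\HoneTLtwod$, which --- as always, coercive or not --- requires the initial datum in $\boldsymbol{V}$ and the source in $\LtwoTLtwod$, both of which you correctly have. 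You are also right to flag the implicit compatibility $\bfunzero\cdot\nu=0$: since the claimed solution is weakly continuous into $\Hdiv$ and the normal trace is weak-to-weak continuous from $\Hdiv$ into $H^{-1/2}(\partial\Omega)$, the condition is indeed forced on the initial datum and is tacitly assumed throughout the paper via the boundary conditions \eqref{balance-state bc}.
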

\begin{proof}
We observe that the right-hand side satisfies $\romeaninv(\bff-\nabla \pn) \in \LtwoTLtwo$. The statement then follows along the lines of, e.g.,~\cite[Theorem 9.6]{salsa2022partial}; we omit the details here.
\end{proof}
Lemma~\ref{lemma bfun} allows us to define the solution operator $S: L^2(0,T; \Wn) \rightarrow \Xumu$, such that $S(\pn) = \bfun$. Let $p^{n, (1)}$, $p^{n, (2)} \in L^2(0,T; \Wn)$ and denote $\bfu^{n,(1)}=S(p^{n, (1)})$ and $\bfu^{n,(2)}=S(p^{n, (2)})$. By testing the problem solved by $\bfu^{n,(1)}-\bfu^{n,(2)}$ with $- \nabla (\nabla \cdot (\bfu^{n,(1)}-\bfu^{n,(2)}))$, we conclude that this operator is globally Lipschitz continuous:
\begin{equation} \label{Lipschitz continuity S}
\begin{aligned}	
	\|S(p^{n, (1)})-S(p^{n, (2)})\|_{\Xumu} 	
	=&\, \|\nabla \cdot(\bfu^{n,(1)}-\bfu^{n,(2)})\|_{\LinfLtwo}+ \sqrt{\mu} \| \nabla(\nabla \cdot(\bfu^{n,(1)}-\bfu^{n,(2)}))\|_{\LtwoLtwo}\\
		\leq&\,C_0 \|\nabla p^{n, (1)}-\nabla p^{n, (2)}\|_{\LtwoLtwo}\\
		\leq&\,C(n) \| p^{n, (1)}- p^{n, (2)}\|_{L^2(\Wn)},
\end{aligned}	
\end{equation}
where the last line follows by the equivalence of norms in finite-dimensional spaces.  The Galerkin problem then reduces to looking for a solution of
\begin{equation} \label{roreln subproblem}
\left\{	\begin{aligned}
		&\ \roreln = - \afunc(\roreln) \nabla\cdot S(\pn) +\gfunc(S(\pn)) \quad \text{ in } \Wn \times (0,T), \\ 
		&\ \roreln(0)= \rorelnzero,\\
		&\pn = \cmean^2 \romean \bfunc(\roreln)\roreln-\modL \roreln + \hfunc(S(\pn)) \quad \text{ in } \Wn \times (0,T),
	\end{aligned} \right.
\end{equation}
which we tackle in the next step. The solution is at first obtained on an $n$-dependent interval $[0, \Tn]$.
\begin{proposition} \label{prop roreln pn}
Let the assumptions of Lemma~\ref{lemma bfun} hold with $\romean \in \Xromean$, $B/A \in \XBonA$, and $\cmean^2 \in \Xcmean$. Then there exists $\Tn=T_n(n) \in (0,T)$, such that problem \eqref{roreln subproblem} has a unique solution $(\roreln, \pn) \in H^1(0,\Tn;\Wn) \cap L^2(0,\Tn; \Wn)$.
\end{proposition}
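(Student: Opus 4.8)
The plan is to recast the reduced Galerkin system~\eqref{roreln subproblem} as a fixed-point equation for the pressure on a short interval $[0,\Tn]$ and to apply Banach's fixed-point theorem. For $\pn$ in $L^2(0,\Tn;\Wn)$ I would define $\mathcal{T}(\pn):=\widehat{\pn}$ in three steps. First, set $\bfun:=S(\pn)$ via Lemma~\ref{lemma bfun}; besides the Lipschitz bound~\eqref{Lipschitz continuity S}, the energy estimates for~\eqref{PDE_Lem1} yield $\|\bfun\|_{\Xumu}+\|\bfun\|_{\LinfnLtwo}\lesssim\|\bfunzero\|_{\Hdiv}+\|\bff\|_{\LtwonLtwo}+\|\nabla\pn\|_{\LtwonLtwo}$ and, for its difference version, Lipschitz dependence of $\bfun$ on $\pn$ in the $\Xumu$- and the $\LinfnLtwo$-norm. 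Second, test the mass conservation equation $\maG$ against $w_1,\dots,w_n$: using that $\afunc(\roreln)=1+2\roreln$ is affine and $\gfunc(\bfun)=-\bfun\cdot\nabla\lnromean$ is linear in $\bfun$, this becomes a linear ODE system $\dot{\boldsymbol{\xi}}=A(t)\boldsymbol{\xi}+\boldsymbol{b}(t)$ for the coefficient vector $\boldsymbol{\xi}$ of $\roreln$ (with $\boldsymbol{\xi}(0)$ the coefficient vector of $\rorelnzero$), whose coefficients lie in $L^\infty(0,\Tn)$ with norms controlled by $\|\nabla\cdot\bfun\|_{\LinfnLtwo}\le\|\bfun\|_{\Xumu}$, $\|\bfun\|_{\LinfnLtwo}$ and $\|\nabla\lnromean\|_{\Linf}$; hence it has a unique solution $\roreln\in H^1(0,\Tn;\Wn)$ on the whole interval, with a \Gronwall{} bound. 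Third, set $\widehat{\pn}:=P_{\Wn}\bigl[\cmean^2\romean\,\bfunc(\roreln)\roreln-\modL\roreln+\hfunc(\bfun)\bigr]$, where $P_{\Wn}$ is the $\Ltwo$-orthogonal projection onto $\Wn$ used in~\eqref{Galerkin approximate system} and the $\rorelnt$ inside $\modL\roreln$ is taken from the second step; since $(-\Deltaromean)^{-1}$ and the fractional powers of $-\Delta$ are self-adjoint and $(-\Deltaromean)^{-1}w_j=\lambda_j^{-1}w_j$, the coefficients of $\widehat{\pn}$ are fixed linear combinations of $\langle(-\Delta)^{y/4}\rorelnt,(-\Delta)^{y/4}w_j\rangle$, $\langle(-\Delta)^{(y+1)/4}\roreln,(-\Delta)^{(y+1)/4}w_j\rangle$ and of $L^2(0,\Tn)$ quantities built from $\roreln$ and $\bfun$, so that $\widehat{\pn}\in L^2(0,\Tn;\Wn)$. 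A fixed point $\pn$ of $\mathcal{T}$, together with the associated $\bfun$ and $\roreln$, is precisely a solution of~\eqref{roreln subproblem}, and conversely.

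The heart of the matter is then to verify that, for $\Tn$ small, $\mathcal{T}$ maps the closed unit ball $\mathcal{B}$ of $L^2(0,\Tn;\Wn)$ into itself and is a contraction there. The key point is that every term of $\|\mathcal{T}(0)\|_{\LtwonLtwo}$ and of the Lipschitz estimate for $\mathcal{T}(\pn^{(1)})-\mathcal{T}(\pn^{(2)})$ on $\mathcal{B}$ carries a strictly positive power of $\Tn$, whereas the accompanying constants do not depend on $\Tn$: the constants come from the equivalence of norms on the finite-dimensional space $\Wn$ (e.g.\ $\|v\|_{\Linf}+\|\nabla v\|_{\Ltwo}\le C(n)\|v\|_{\Ltwo}$ for $v\in\Wn$), from the boundedness on $\Wn$ of $P_{\Wn}$, $(-\Deltaromean)^{-1}$ and $v\mapsto(-\Delta)^{s}v$, and from $\mu^{-1}$, $\|\romeaninv\|_{\Linf}$, $\|\cmean^2\|_{\Linf}$, $\|B/A\|_{\Linf}$ in~\eqref{Lipschitz continuity S} and Lemma~\ref{lemma bfun}, while the powers of $\Tn$ arise because $\|\cdot\|_{L^2(0,\Tn;X)}\le\sqrt{\Tn}\,\|\cdot\|_{L^\infty(0,\Tn;X)}$ for any Banach space $X$, because $\|\bff\|_{\LtwonLtwo}\to0$ as $\Tn\to0$, and because the Volterra operator $\It$ in $\hfunc$ contributes a factor $\Tn$. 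Propagating these through the chain $\pn\mapsto\bfun\mapsto\nabla\cdot\bfun\mapsto\rorelnt\mapsto\modL\roreln$, and likewise through $\hfunc(\bfun)$ and $\cmean^2\romean\bfunc(\roreln)\roreln$, gives $\|\mathcal{T}(0)\|_{\LtwonLtwo}\to0$ and $\|\mathcal{T}(\pn^{(1)})-\mathcal{T}(\pn^{(2)})\|_{\LtwonLtwo}\le C(n)\sqrt{\Tn}\,\|\pn^{(1)}-\pn^{(2)}\|_{\LtwonLtwo}$ for $\pn^{(1)},\pn^{(2)}\in\mathcal{B}$. Choosing $\Tn=T_n(n)\in(0,T)$ small enough then makes $\mathcal{T}$ a contractive self-map of $\mathcal{B}$, and Banach's theorem produces a unique fixed point $\pn\in\mathcal{B}$, hence a solution $(\roreln,\pn)$ of~\eqref{roreln subproblem}; uniqueness in the full space---not merely in $\mathcal{B}$---follows by subtracting two solutions and invoking \Gronwall's inequality, after possibly shrinking $\Tn$. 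Finally, the asserted regularity is obtained by bootstrapping: $\roreln\in H^1(0,\Tn;\Wn)$ holds by construction, and, using $\bfun\in H^1(0,\Tn;\Ltwod)$ from Lemma~\ref{lemma bfun}, differentiating the ODE of the second step in time gives $\roreln\in H^2(0,\Tn;\Wn)$; differentiating the pressure relation in~\eqref{roreln subproblem} in time then writes $\pn_t$ as a sum of products of $\Wn$-valued functions bounded in time with ones square-integrable in time, so $\pn\in H^1(0,\Tn;\Wn)$ as well.

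I expect the main obstacle to be exactly the reason the preceding estimates must be done with care: \eqref{roreln subproblem} is not a genuine system of ordinary differential equations, since $\bfun=S(\pn)$ depends causally on the whole history of $\pn$---it is itself the solution of a parabolic problem---and $\hfunc$ involves the time integral $\It$, so Picard--Lindel\"of cannot be invoked directly and one really has to track how these ``memory'' contributions scale with $\Tn$. A secondary, purely technical nuisance is that the basis functions $w_i$ diagonalise $-\Deltaromean$ but not the Neumann Laplacian $-\Delta$, so that $(-\Delta)^{y/2}$ and $(-\Delta)^{(y+1)/2}$ map $\Wn$ out of $\Wn$; this does no harm because $\modL\roreln$ enters~\eqref{roreln subproblem} only after projection onto $\Wn$ (equivalently, tested against the $w_j$), which reduces it to the symmetric pairings $\langle(-\Delta)^{y/4}v,(-\Delta)^{y/4}w_j\rangle$ and $\langle(-\Delta)^{(y+1)/4}v,(-\Delta)^{(y+1)/4}w_j\rangle$ with $v\in\Wn$, finite since $w_j\in\Hyplusonetwo$, and it affects only the $\Tn$-independent constants.
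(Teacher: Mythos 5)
Your proposal is correct, and it reaches the same conclusion by a genuinely different fixed-point decomposition. The paper's proof applies Banach's theorem to the \emph{pair} $(\rorelnstar,\pnstar)\mapsto(\roreln,\pn)$: both entries are frozen at the old iterate, so the linearized mass equation \eqref{roreln subproblem linearized} has a fully explicit right-hand side $-\afunc(\rorelnstar)\nabla\cdot S(\pnstar)+\gfunc(S(\pnstar))$ and $\roreln$ is obtained by time integration alone, with no ODE solve and no \Gronwall{} factor at this stage; the ball is sized by two free radii $R_1,R_2$, and self-mapping and contractivity follow from $\sqrt{T}$ gains and $C(n)$ constants exactly as you describe. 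You instead iterate on the scalar pressure $\pn$ alone, and at each step solve the \emph{exact} projected mass conservation for $\roreln$ as a linear ODE system (legitimate because $\afunc$ is affine), incurring a \Gronwall{} constant that you then bound uniformly on the unit ball of $L^2(0,\Tn;\Wn)$ by the finite-dimensional norm equivalence. Your route is conceptually leaner (one unknown, one ball) at the cost of an extra ODE/\Gronwall{} argument, while the paper's route avoids the \Gronwall{} step by fully linearizing but has to balance the two radii $R_1,R_2$ against each other and $T(n)$. The mechanism that makes either version go through --- $\sqrt{T}$ factors against $n$-dependent constants from norm equivalence on $\Wn$, and the Lipschitz continuity \eqref{Lipschitz continuity S} of the velocity solution operator $S$ --- is identical, and you correctly flag the two genuine subtleties (the nonlocal-in-time dependence through $S$ and $\It$, and the fact that the $w_i$ diagonalize $-\Deltaromean$ but not $-\Delta$, so $\modL$ must always be read through the $L^2$-projection onto $\Wn$). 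One small remark: the paper's statement contains a typo --- the regularity is $(\roreln,\pn)\in H^1(0,\Tn;\Wn)\times L^2(0,\Tn;\Wn)$, not an intersection --- so the bootstrap to $\pn\in H^1(0,\Tn;\Wn)$ in your last step, while valid, is not required.
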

\begin{proof}
Let $R_1$, $R_2>0$. 
To prove unique solvability of \eqref{roreln subproblem}, we apply Banach's fixed-point theorem on the mapping 
\[
\calT: (\rorelnstar, \pnstar) \mapsto (\roreln, \pn),
\]
where $ (\rorelnstar, \pnstar)$ is taken from the ball
\begin{equation}
	\begin{aligned}
	 \ball = \Bigl\{(\rorelnstar, \pnstar )\in H^1(0,T; \Wn) \times L^2(0,T; \Wn): &\, \|\rorelnstar\|_{H^1(0,T; \Ltwo)} \leq R_1, \
		\|\pnstar\|_{\LtwoLtwo} \leq R_2, \\
		&\, \rorelnstar (0) = \rorelnzero \ \Bigr \},
	\end{aligned}
\end{equation}
 and $(\roreln, \pn)$ solves the following linear problem:
\begin{equation} \label{roreln subproblem linearized}
	\left\{	\begin{aligned}
		&\ \rorelnt = - \afunc(\rorelnstar) \nabla\cdot S(\pnstar) + \gfunc( S(\pnstar)) \quad \text{ in } \Wn \times (0,T), \\ 
		&\ \roreln(0)= \rorelnzero,\\
		&\pn+\modL \roreln = \cmean^2 \romean \bfunc(\rorelnstar)\rorelnstar + \hfunc( S(\pnstar))\quad \text{ in } \Wn \times (0,T).
	\end{aligned} \right.
\end{equation}

\noindent \underline{Self-mapping}:  Take $(\rorelnstar, \pnstar) \in \ball$. We first check that $(\roreln, \pn)=\calT (\rorelnstar, \pnstar) \in \ball$. Note that
\begin{equation}
	\begin{aligned}
		\|\rorelnt\|_{\HoneLtwo} \leq&\, \|\rorelnt\|_{\LtwoLtwo} + \|\It \rorelnt + \rorelnzero\|_{\LtwoLtwo}\\
		\leq&\, (1+T)\|\rorelnt\|_{\LtwoLtwo} + \sqrt{T}\|\rorelnzero\|_{\Ltwo}.
	\end{aligned}
\end{equation}
Using the first equation in \eqref{roreln subproblem linearized}, we then have
\begin{equation} \label{est roreln}
	\begin{aligned}
	\|\roreln\|_{H^1(\Ltwo)}
	 \leq&\,\begin{multlined}[t] (1+T)\left( \|\afunc(\rorelnstar) \nabla\cdot S(\pnstar)\|_{L^2(\Ltwo)} +\|\gfunc(S(\pnstar))\|_{L^2(\Ltwo)} \right)+ \sqrt{T}\|\rorelnzero\|_{\Ltwo} 
			\end{multlined}
			\\
				 \leq&\, \begin{multlined}[t] (1+T)\sqrt{T}\left( \|\afunc(\rorelnstar)\|_{\Linf} \|\nabla\cdot S(\pnstar)\|_{L^\infty(\Ltwo)} +\|\gfunc(S(\pnstar))\|_{L^\infty(\Ltwo)} \right)+ \sqrt{T}\|\rorelnzero\|_{\Ltwo} .
			\end{multlined}
	\end{aligned}
\end{equation}
By relying on the estimate
\begin{equation}
\begin{aligned}
\|\gfunc(S(\pnstar))\|_{L^2(\Ltwo)}=\|-S(\pnstar) \cdot \nabla \ln \romean\|_{L^2(\Ltwo)} \leq&\, \|\nabla\ln \romean\|_{\Linf}\sqrt{T} \|S(\pnstar)\|_{L^\infty(\Ltwo)}
%\leq&\, C(n)\|\nabla \romean\|_{\Linf}\sqrt{T} \|\pnstar\|_{L^2(\Ltwo)}
\end{aligned}
\end{equation}
and the equivalence of norms in finite-dimensional spaces, from \eqref{est roreln}, we conclude that
\begin{equation} \label{est roreln new}
	\begin{aligned}
		\|\roreln\|_{H^1(\Ltwo)}
		\leq&\, C(n)(1+T)\sqrt{T}\left((1+R_1) R_2 + \|\nabla \ln \romean\|_{\Linf}  R_2 \right) +\sqrt{T} \|\rorelnzero\|_{\Ltwo}.
	\end{aligned}
\end{equation}
We can thus guarantee that $\|\roreln\|_{H^1(\Ltwo)} \leq R_1$ by reducing $T=T(n)$. \\
\indent From the last equation in \eqref{roreln subproblem linearized} and the fact that $\|\modL (\roreln)\|_{\LtwoLtwo} \leq C(n) \|\roreln\|_{\HoneLtwo}$, we can estimate $\pn$ as follows:
\begin{equation}  \label{est pn}
	\begin{aligned}
		\|\pn\|_{\LtwoLtwo} \leq&\, \sqrt{T}\|\cmean^2 \romean \bfunc(\rorelnstar)\roreln\|_{\LinfLtwo} +\|L\roreln\|_{\LtwoLtwo}+ \sqrt{T}\|\hfunc(S(\pnstar))\|_{\LinfLtwo} \\
		\leq&\, \begin{multlined}[t] C(n)\left( \sqrt{T}\|\cmean^2\romean\|_{\Linf} (1+ \tfrac12 \|B/A\|_{\Linf})R_1+\|\roreln\|_{\HoneLtwo} +\sqrt{T} \| \cmean^2\nabla \romean\|_{\Linf}  R_2 \right)\\+\sqrt{T} \| \cmean^2\nabla \romean\|_{\Linf} \|\bfdzero\|_{\Ltwo},
			\end{multlined}
	\end{aligned}
\end{equation}
where we have used the fact that
\begin{equation}
	\begin{aligned}
 \|\hfunc(S(\pnstar))\|_{\LinfLtwo} =&\, \|\cmean^2\left(\It(S(\pnstar)) +\boldsymbol{d}_0\right)\cdot \nabla \romean\|_{\LinfLtwo}\\
  \leq&\, \|\cmean^2 \nabla \romean\|_{\Linf}T\|S(\pnstar)\|_{\LinfLtwo}+\|\cmean^2 \bfdzero \cdot \nabla \romean\|_{\Ltwo}.
 \end{aligned}
\end{equation}
Since we can reduce $\|\roreln\|_{\HoneLtwo}$ by reducing the final time, from \eqref{est pn}, we conclude that
\[
	\|\pn\|_{\LinfLtwo}  \leq R_2,
\]
provided $T=T(n)$ is small enough. \\[0mm]

\noindent \underline{Contractivity}: Let $(\rorelnstarone, \pnstarone)$, $(\rorelnstartwo, \pnstartwo) \in \ball$ and denote $(\rorelnone, \pnone)=\calT (\rorelnstarone, \pnstarone)$ and $(\rorelntwo, \pntwo)= \calT(\rorelnstartwo, \pnstartwo)$. Further, we introduce the following notation for the differences:
\begin{equation}
	\begin{aligned}
		\ororelnstar =&\, \rorelnstarone-\rorelnstartwo, \quad &&\ororeln= \rorelnone-\rorelntwo, \\
		\opnstar=&\,\pnstarone-\pnstartwo, \ &&\opn=\pnone-\pntwo.
	\end{aligned}
\end{equation}
We can see  $(\ororeln, \opn)$ as the solution to the following problem:
\begin{equation} \label{diff roreln subproblem linearized}
	\left\{	\begin{aligned}
		&\ \ororeln_t = - \afunc(\rorelnstarone) \nabla\cdot (S(\pnstarone)-S(\pnstartwo))- 2\ororelnstar\nabla\cdot S(\pnstartwo)+ \gfunc\left( S(\pnstarone)- S(\pnstartwo)\right) \quad \text{ in } \Wn \times (0,T), \\ 
		&\ \ororeln(0)= 0,\\
		&\opn +\modL \ororeln=\cmean^2 \romean \frac{B}{2A}\ororelnstar\rorelnstarone+ \cmean^2 \romean \bfunc(\rorelnstartwo)\ororelnstar + \hfunc\left( S(\pnstarone)-S(\pnstartwo)\right)\quad \text{ in } \Wn \times (0,T),
	\end{aligned} \right.
\end{equation}
where we have used the fact that $a(\rorelnstarone)-a(\rorelnstartwo) = 2 \ororelnstar$ and $b(\rorelnstarone)-b(\rorelnstartwo)= \frac{B}{2A}\ororelnstar$; cf.\ \eqref{def a b}.
Similarly to \eqref{est roreln}, we then have the following estimate:
\begin{equation}
	\begin{aligned}
		\|\ororeln\|_{\HoneLtwo} 
		\leq&\, \begin{multlined}[t]  (1+T)\sqrt{T}\Bigl(\|\afunc(\rorelnstarone)\nabla \cdot (S(\pnstarone)-S(\pnstartwo)) \|_{\LinfLtwo} + 2\|\ororelnstar\nabla \cdot S(\pnstartwo)\|_{\LinfLtwo}\\+ \|\gfunc\left(S(\pnstarone)-S(\pnstartwo)\right)\|_{\LinfLtwo}.	\end{multlined} 
	\end{aligned}
\end{equation}
By relying on the fact that
\[
\begin{aligned}
\|\afunc(\rorelnstarone)\|_{\LinfLinf} \leq&\, C(n)(1+R_1), \\
 \|\nabla \cdot S(\pnstartwo)\|_{\LinfLinf}\leq&\, C(n) \|\nabla \cdot S(\pnstartwo)\|_{\LinfLtwo} \leq\, C(n) \|\pnstartwo\|_{\LtwoLtwo} \leq\, C(n) R_2,
\end{aligned}
\]
together with the Lipschitz continuity of $S$ (see \eqref{Lipschitz continuity S})  and
\[
\begin{aligned}
	\|\gfunc\left(S(\pnstarone)-S(\pnstartwo)\right)\|_{\LinfLtwo} \leq C(n) \|\nabla \ln \romean\|_{\Linf}\|\opnstar\|_{\LtwoLtwo},
\end{aligned}
\]
we obtain
\begin{equation} \label{est ororeln}
	\begin{aligned}
		\|\ororeln\|_{\HoneLtwo} 
		\lesssim&\, C(n)(1+T) \sqrt{T} \left(\|\ororelnstar\|_{\LinfLtwo}+ \|\opnstar\|_{\LtwoLtwo}\right).
	\end{aligned}
\end{equation}
We can bound the differences of pressures as follows:
\begin{equation} \label{est 1 opn}
	\begin{aligned}
		\|\opn\|_{\LtwoLtwo} \leq&\, \begin{multlined}[t] \sqrt{T}\|\cmean^2\romean \tfrac{B}{2A} \ororelnstar \rorelnstarone\|_{\LinfLtwo} +\sqrt{T}\|\cmean^2\romean \bfunc(\rorelnstartwo) \ororelnstar\|_{\LinfLtwo}
			+\|\modL (\ororeln)\|_{\LtwoLtwo}\\+ \|\hfunc(S(\pnstarone)-S(\pnstartwo))\|_{\LtwoLtwo}.
			\end{multlined}
	\end{aligned}
\end{equation}
By the equivalence of norms in finite-dimensional spaces and estimate \eqref{est ororeln}, we infer
\begin{equation}
	\begin{aligned}
	\|\modL (\ororeln)\|_{\LtwoLtwo} \leq C(n) \|\ororeln\|_{\HoneLtwo} \lesssim C(n) \sqrt{T} \left(\|\ororelnstar\|_{\LinfLtwo}+ \|\opnstar\|_{\LtwoLtwo}\right).
	\end{aligned}
\end{equation}
Further,
\begin{equation}
	\begin{aligned}
		 \|\hfunc(S(\pnstarone)-S(\pnstartwo))\|_{\LtwoLtwo} \leq&\, \sqrt{T} \|\cmean^2 \It \left(S(\pnstarone)-S(\pnstartwo)\right)\cdot \nabla \romean\|_{\LinfLtwo} \\
		 \leq&\,  \sqrt{T}\|\cmean^2 \nabla \romean\|_{\Linf}\|S(\pnstarone)-S(\pnstartwo\|_{\LinfLtwo}\\
		 \leq&\, C(n) \sqrt{T} \|\opnstar\|_{\LtwoLtwo}.
	\end{aligned}
\end{equation}
Using this bound in \eqref{est 1 opn} together with the Lipschitz continuity of the operator $S$ yields
\begin{equation} \label{est opn}
	\begin{aligned}
		\|\opn\|_{\LtwoLtwo} \leq&\, C(n) \sqrt{T}\left(\|\ororelnstar\|_{\HoneLtwo}+ \|\opnstar\|_{\LtwoLtwo}\right).
	\end{aligned}
\end{equation}
By adding the two bounds, \eqref{est ororeln} and \eqref{est opn}, we arrive at
\begin{equation}
	\|\ororeln\|_{\HoneLtwo} +\|\opn\|_{\LtwoLtwo} \leq C(n)(1+T)\sqrt{T}\left(\|\ororelnstar\|_{\LinfLtwo}+ \|\opnstar\|_{\LtwoLtwo}\right).
\end{equation}
Thus strict contractivity of the mapping can be guaranteed by reducing $T=T(n)$. An application of Banach's fixed-point theorem yields the statement.
\end{proof}

%%%%%%%%%%%%%%%%%%%%%%%%%%%%%%%%%%%%%%%%

\subsection{Energy identity for Galerkin approximations}
Having constructed Galerkin approximations, in the next step, we derive an energy identity for \eqref{Galerkin approximate system} on $[0,\Tn]$.  For this purpose, we introduce $\gcoefprojection=\projection[\gfunc(\bfun)]\in\Wn$ as the Ritz projection of $\gcoef=\gfunc(\bfun)= -\bfun\cdot \nabla \ln \romean$ in the sense of
\begin{equation}\label{GalProj}
	\intO \frac{1}{\romean}\nabla \gfunc(\bfun) \cdot\nabla v_n \dx=\intO \frac{1}{\romean}\nabla \gcoefprojection \cdot\nabla v_n \dx\quad \ \text{ for all} \ v_n \in \Wn;
\end{equation}
that is, 
\[
(-\Deltaromean \gfunc(\bfun),\, v_n)_{L^2} = ( -\Deltaromean \gcoefprojection,\, v_n)_{L^2}\quad \ \text{ for all} \ v_n \in \Wn.
\]
In the derivation of the energy identity for $(\bfun, \roreln, \pn)$, we rely on the stability of this projection operator in the following sense.
\begin{lemma}\label{lemGprojg}
For $\gcoef= \gfunc(\bfun)=-\nabla\ln\romean\cdot\bfun$, where $\bfun\in \Linf \cap \Hyplusonetwo$, $\romean\in\Xromean$, $\|\nabla\ln\romean\|_{\Linf\cap\Hyplusonetwo}\leq1$, we have 
\begin{equation} \label{stability bounds grpojection}
	\begin{aligned}
		&\|\nabla \gcoefprojection\|_{\Ltwo} 
\leq  C \|\nabla\ln\romean\|_{\Linf\cap W^{1,3}(\Omega)} \|\bfun\|_{\Hone},	\\[1mm]
		&	\|(-\Delta_N)^{\frac{y}{4}}\gcoefprojection\|_{\Ltwo} 
\leq C \|\nabla\ln\romean\|_{\Linf\cap \Hytwo} \|\bfun\|_{\Linf\cap \Hytwo},\\[1mm]
		&\|(-\Delta_N)^{\frac{y+1}{4}}\gcoefprojection\|_{\Ltwo} 
\leq C  \|\nabla\ln\romean\|_{\Linf\cap\Hyplusonetwo} \|\bfun\|_{\Linf\cap\Hyplusonetwo},
	\end{aligned}
\end{equation}
with $C$ depending only on $\|\romean\|_{\Linf}$, $\|\romeaninv\|_{\Linf}$, but not on $n$.
\end{lemma}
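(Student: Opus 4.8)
The plan is to reduce all three estimates to (i) spectral properties of the weighted operator $-\Deltaromean$ and (ii) product (fractional Leibniz) estimates for $\gfunc(\bfun)=-\nabla\lnromean\cdot\bfun$ in Sobolev scales. \emph{First, identify the projection.} Writing $\gcoefprojection=\sum_{i=1}^n\acoef_iw_i$ in \eqref{GalProj}, testing with $v_n=w_j$, and using $-\Deltaromean w_i=\lambda_iw_i$ together with $\nabla w_i\cdot\nu=0$ (so that, after integration by parts, $\intO\romeaninv\nabla w_i\cdot\nabla w_j\dx=\lambda_i(w_i,w_j)_{\Ltwo}=\lambda_i\|w_i\|_{\Ltwo}^2\delta_{ij}$ and $\intO\romeaninv\nabla\gfunc(\bfun)\cdot\nabla w_j\dx=\lambda_j(\gfunc(\bfun),w_j)_{\Ltwo}$), one gets $\acoef_i=(\gfunc(\bfun),w_i)_{\Ltwo}/\|w_i\|_{\Ltwo}^2$. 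Hence $\gcoefprojection$ is exactly the $\Ltwo$-orthogonal projection of $\gfunc(\bfun)$ onto $\Wn$, i.e.\ the spectral truncation of $\gfunc(\bfun)$ relative to $-\Deltaromean$; in particular it commutes with every power of $-\Deltaromean$ and is a contraction, with constant $1$ uniformly in $n$, on each graded space $D((-\Deltaromean)^s)$, $s\geq0$. On $D((-\Deltaromean)^{1/2})=\Hone$ (zero mean) the induced norm is $\|(-\Deltaromean)^{1/2}v\|_{\Ltwo}=\|\romean^{-1/2}\nabla v\|_{\Ltwo}\simeq\|\nabla v\|_{\Ltwo}$ with constants depending only on $\|\romean\|_{\Linf}$, $\|\romeaninv\|_{\Linf}$.

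\emph{Second, the $H^1$ bound.} Taking $v_n=\gcoefprojection$ in \eqref{GalProj} and applying the Cauchy--Schwarz inequality in the weighted inner product $a(u,v)=\intO\romeaninv\nabla u\cdot\nabla v\dx$ gives $a(\gcoefprojection,\gcoefprojection)=a(\gfunc(\bfun),\gcoefprojection)$, hence $\|\nabla\gcoefprojection\|_{\Ltwo}\lesssim\|\nabla\gfunc(\bfun)\|_{\Ltwo}$ with a constant depending only on $\|\romean\|_{\Linf}$, $\|\romeaninv\|_{\Linf}$. Expanding $\nabla\gfunc(\bfun)=-(\nabla^2\lnromean)\bfun-(\nabla\bfun)^{\mathsf T}\nabla\lnromean$ and using $\Wonethree\hookrightarrow\Linf$ together with $\Hone\hookrightarrow\Lsix$ (valid for $d\leq3$) yields $\|\nabla\gfunc(\bfun)\|_{\Ltwo}\lesssim\|\nabla\lnromean\|_{\Linf\cap\Wonethree}\|\bfun\|_{\Hone}$, which is the first claimed estimate.

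\emph{Third, the fractional bounds.} Since $\gcoefprojection\in\Wn$ satisfies $\nabla\gcoefprojection\cdot\nu=0$ and is smooth, both $(-\Delta_N)^{y/4}\gcoefprojection$ and $(-\Delta_N)^{(y+1)/4}\gcoefprojection$ are well defined and $\|(-\Delta_N)^{y/4}\gcoefprojection\|_{\Ltwo}\simeq\|(-\Deltaromean)^{y/4}\gcoefprojection\|_{\Ltwo}$, likewise at level $(y+1)/4$, with constants controlled by $\|\romean\|_{\Linf}$, $\|\romeaninv\|_{\Linf}$ — the base cases $s=\tfrac12,1$ are compared directly (using the standing normalization $\|\nabla\lnromean\|_{\Linf\cap\Hyplusonehalf}\leq1$ at $s=1$ to identify $D((-\Deltaromean))$ with $\{\Htwo:\nabla v\cdot\nu=0\}$) and then one interpolates. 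It therefore suffices to bound the weighted quantities. For the middle estimate $\tfrac y4\leq\tfrac34$, so $D((-\Deltaromean)^{y/4})=\Hyhalf$ carries no boundary constraint; thus $\gfunc(\bfun)\in D((-\Deltaromean)^{y/4})$ and the contraction property from the first step gives $\|(-\Deltaromean)^{y/4}\gcoefprojection\|_{\Ltwo}\leq\|(-\Deltaromean)^{y/4}\gfunc(\bfun)\|_{\Ltwo}\simeq\|\gfunc(\bfun)\|_{\Hyhalf}$, and the fractional Leibniz rule $\|fh\|_{H^s}\lesssim\|f\|_{\Linf}\|h\|_{H^s}+\|f\|_{H^s}\|h\|_{\Linf}$ with $f=\nabla\lnromean$, $h=\bfun$, $s=\tfrac y2$ closes it.

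\emph{The main obstacle} is the last bound, $\|(-\Deltaromean)^{(y+1)/4}\gcoefprojection\|_{\Ltwo}$: the exponent $\tfrac{y+1}{4}$ lies in $[\tfrac34,1]$, i.e.\ \emph{above} the threshold at which the Neumann boundary condition enters the characterization of $D((-\Deltaromean)^{(y+1)/4})$, whereas $\gfunc(\bfun)=-\nabla\lnromean\cdot\bfun$ does not in general satisfy $\nabla\gfunc(\bfun)\cdot\nu=0$, so $\gfunc(\bfun)\notin D((-\Deltaromean)^{(y+1)/4})$ and one \emph{cannot} simply estimate $\|(-\Deltaromean)^{(y+1)/4}\gcoefprojection\|_{\Ltwo}$ by $\|(-\Deltaromean)^{(y+1)/4}\gfunc(\bfun)\|_{\Ltwo}$. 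The route is to integrate by parts once, using $\lambda_i(\gfunc(\bfun),w_i)_{\Ltwo}=\intO\romeaninv\nabla\gfunc(\bfun)\cdot\nabla w_i\dx$ (legitimate, only $\nabla w_i\cdot\nu=0$ is used), so that
\[
\|(-\Deltaromean)^{(y+1)/4}\gcoefprojection\|_{\Ltwo}^2=\sum_{i=1}^n\frac{\lambda_i^{(y-3)/2}}{\|w_i\|_{\Ltwo}^2}\Bigl(\intO\romeaninv\nabla\gfunc(\bfun)\cdot\nabla w_i\dx\Bigr)^2,
\]
and then to exploit the \emph{negative} exponent $\tfrac{y-3}{2}<0$ together with $\lambda_i^{1/2}\|w_i\|_{\Ltwo}\simeq\|\nabla w_i\|_{\Ltwo}$ to re-express the right-hand side as the action of a spectral projector one order lower, at the level $D((-\Deltaromean)^{(y-1)/4})$ — and there $\tfrac{y-1}{4}<\tfrac34$, so the boundary constraint is inactive and the contraction property applies again. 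This reduces the estimate to $\|\gfunc(\bfun)\|_{\Hyplusonehalf}$ in the usual Sobolev sense, which is bounded by fractional Leibniz as above; the hypothesis $y>d-1$ guarantees the required product estimate in $\Hyplusonehalf$ in dimension $d$. Making this ``drop of one order'' rigorous — in particular controlling the boundary-layer contributions produced by $\gfunc(\bfun)$ not lying in the domain — is the technical heart of the proof.
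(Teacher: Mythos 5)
Your first two steps --- identifying $\gcoefprojection$ as the $\Ltwo$-orthogonal projection of $\gcoef$ onto $\Wn$, so that it commutes with $(-\Deltaromean)^{s}$ and is a contraction in every graded norm, and the $H^1$ bound by testing the relation with $v_n=\gcoefprojection$, Cauchy--Schwarz in the weighted form, and the product rule for $\nabla\gfunc(\bfun)$ --- match the paper. For the two fractional bounds the paper likewise uses the spectral contraction $\|(-\Deltaromean)^\gamma\gcoefprojection\|_{\Ltwo}\le\|(-\Deltaromean)^\gamma\gcoef\|_{\Ltwo}$, then a norm comparison between $(-\Deltaromean)^\gamma$ and $(-\Delta_N)^\gamma$ (obtained there via a Courant--Fischer eigenvalue-comparison lemma, whereas you interpolate from the base cases $s=\tfrac12,1$; both are in the same spirit, though your interpolation route is arguably the more direct way to get a pointwise norm equivalence rather than a comparison of eigenvalue sequences), and closes with a Kato--Ponce-type product estimate.

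The difficulty you flag in the third estimate is real: for $\gamma=\tfrac{y+1}{4}$ with $y>2$, $2\gamma>\tfrac32$, so $D((-\Deltaromean)^\gamma)$ carries the Neumann constraint $\nabla v\cdot\nu=0$, which $\gcoef=-\nabla\lnromean\cdot\bfun$ has no reason to satisfy; then the spectral sum $\|(-\Deltaromean)^\gamma\gcoef\|_{\Ltwo}$ need not be finite and cannot in general be controlled by $\|\gcoef\|_{H^{(y+1)/2}(\Omega)}$. The paper's own proof applies the contraction and then Kato--Ponce directly without addressing this, so the concern applies there too. However, your proposed fix --- the ``drop of one order'' --- is circular and does not close the gap. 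Writing $\intO\romeaninv\nabla\gcoef\cdot\nabla w_i\dx=\lambda_i^{1/2}\bigl((-\Deltaromean)^{1/2}\gcoef,w_i\bigr)_{\Ltwo}$ and inserting into your displayed formula simply recasts the sum as $\|(-\Deltaromean)^{(y-1)/4}\textup{P}_{\Wn}\bigl[(-\Deltaromean)^{1/2}\gcoef\bigr]\|_{\Ltwo}^{2}$, and the contraction property then returns $\|(-\Deltaromean)^{(y+1)/4}\gcoef\|_{\Ltwo}^{2}$ unchanged: the operator at exponent $\tfrac{y-1}{4}$ indeed carries no boundary constraint, but the function it must act on, $(-\Deltaromean)^{1/2}\gcoef$, lies in $D\bigl((-\Deltaromean)^{(y-1)/4}\bigr)$ only if $\gcoef\in D\bigl((-\Deltaromean)^{(y+1)/4}\bigr)$, which is exactly what cannot be assumed. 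As stated, the third inequality therefore remains unproved in your write-up (and you acknowledge as much); a genuinely new ingredient would be needed here --- for instance exploiting the commutator structure in the Galerkin relation so as never to leave the operator domain, or replacing the left-hand norm by one insensitive to the boundary layer.
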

\begin{proof}
The proof is provided in the appendix.
\end{proof}

We proceed to derive an energy identity for $(\bfun, \roreln, \pn)$ on $[0, \Tn]$ under the assumption of uniform smallness of solutions on $[0, \Tn]$.
\begin{proposition} \label{prop: Galerkin energy id}
	Let the assumptions of Lemma~\ref{lemma bfun} and Proposition~\ref{prop roreln pn} hold with $\bff \in \Xf$. Let $(\bfun, \roreln, \pn)$ be the solution of \eqref{Galerkin approximate system} on $[0, \Tn]$. Assume that there exists $r>0$, independent of $n$, such that
	\begin{equation} \label{r Linf smallness}
		|\roreln(x,t)| \leq r \quad \text{for all } (x,t) \in \Wn \times [0, \Tn].
	\end{equation}
	Then if $r>0$ is sufficiently small, there exist $\ulacoef$, $\olacoef>0$ and $\ulbfunc$, $\olbfunc>0$, independent of $n$, such that
	\begin{equation} \label{nondegeneracy}
		\begin{aligned}
			&	0 < \ulafunc \leq	\afunc(\roreln) \leq \olafunc \ \quad &&\text{for all } (x,t) \in \Wn \times [0, \Tn],\\
			& 0 <  \ulbfunc \leq	\bfunc(\roreln) \leq  \olbfunc \ \quad &&\text{for all } (x,t) \in \Wn \times [0, \Tn],
		\end{aligned}
	\end{equation}	
and the following identity holds:
	\begin{equation}\label{energy identity}
		\begin{aligned}
			&\begin{multlined}[t]\frac12 \ddt \Bigl(\|
				{\sqrt{\afunc(\roreln)}}\nabla\cdot\bfun\|_{\Ltwo}^2 
				+ \|\cmean\sqrt{\bfunc(\roreln)}\, \nabla \roreln\|_{\Ltwo}^2\Bigr)
				+\mu\|\sqrt{(\afunc(\roreln))/\romean}\nabla(\nabla\cdot\bfun)\|_{\Ltwo}^2\\
				+\alpha_0\Bigl(2\tau\|(-\Delta_N)^{\frac{y}{4}} \rorelnt\|_{\Ltwo}^2
				+\eta \ddt  \|(-\Delta_N)^{\frac{y+1}{4}} \roreln\|_{\Ltwo}^2\Bigr) 
				= 
				\rhsone+\rhstwo,\end{multlined}
		\end{aligned} 
	\end{equation}
	where the right-hand side terms are given by
	\begin{equation} \label{rhsone}
		\begin{aligned}
			\rhsone=&\, \begin{multlined}[t]
				-\intO	{\afunc(\roreln)\nabla \cdot(\romean^{-1} \bff)\nabla\cdot\bfun \dx}	
				+ \intO \frac12\cmean^2 
\bfunc'(\roreln)\rorelnt
|\nabla\roreln|^2 \dx
				\\	- \intO \roreln\left(\nabla [\cmean^2 \bfunc(\roreln)]+\cmean^2 \bfunc(\roreln)\nabla\ln\romean\right)\cdot\nabla\rorelnt \dx 
				+\frac12 \intO 
%\acoeft(\roreln) 
\afunc'(\roreln)\rorelnt
|\nabla \cdot \bfun|^2  
				\\	+\mu\intO \frac{1}{\romean}\nabla(\nabla\cdot\bfun)\cdot (\afunc'(\roreln) \nabla \roreln) \divbfun \dx
			\end{multlined}
		\end{aligned}
	\end{equation}
	and
	\begin{equation} \label{rhstwo}
		\begin{aligned}
			\rhstwo=&\, \begin{multlined}[t]
				-\intO \frac{1}{\romean}\nabla \hfunc(\bfun)\cdot\nabla \rorelnt \dx  \\+	\intO \Bigl(
				\frac{1}{\romean}\nabla \gprojection(\bfun) \cdot\nabla [\cmean^2\romean\bfunc(\roreln)\,\roreln-\hfunc(\bfun)]
				-2\alpha_0 \bigl(\tau(-\Delta_N)^{\frac{y}{2}} \rorelnt 
				+\eta (-\Delta_N)^{\frac{y+1}{2}} \roreln\bigr)\gprojection(\bfun)
				\Bigr)\dx
			\end{multlined}
		\end{aligned}
	\end{equation}
with 
$\afunc'(\roreln)=2$ and $\bfunc'(\roreln)=\frac{B}{2A}$.
\end{proposition}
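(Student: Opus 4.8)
The plan splits into the two assertions of the proposition. The nondegeneracy bounds \eqref{nondegeneracy} follow at once from the pointwise smallness \eqref{r Linf smallness}: since $\afunc(\roreln)=1+2\roreln$ and $\bfunc(\roreln)=1+\frac{B}{2A}\roreln$ with $B/A\in\Linf$, one has $\afunc(\roreln)\in[1-2r,\,1+2r]$ and $\bfunc(\roreln)\in[1-\tfrac12\|B/A\|_{\Linf}r,\,1+\tfrac12\|B/A\|_{\Linf}r]$ pointwise on $\Omega\times[0,\Tn]$; choosing $r$ small enough (depending only on $\|B/A\|_{\Linf}$) forces strictly positive lower bounds $\ulafunc,\ulbfunc$, and all four constants $\ulafunc,\olafunc,\ulbfunc,\olbfunc$ are manifestly independent of $n$.

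For the identity \eqref{energy identity} I would first recast the momentum equation $\moG$ as a scalar parabolic equation for $w:=\divbfun$: dividing $\moG$ by $\romean$ (admissible by Lemma~\ref{lemma bfun}), taking the divergence, and using $\bff\in\Hdivzero$, one obtains
\begin{equation}
	w_t=\mu\Deltaromean w-\Deltaromean\pn+\nabla\cdot\bigl(\romeaninv\bff\bigr)\quad\text{in }\Omega\times(0,\Tn),
\end{equation}
together with the Neumann boundary condition $\nabla w\cdot\nu=0$ on $\partial\Omega$, which follows from $\bfunt\cdot\nu=0$ (differentiate $\bfun\cdot\nu=0$ in time), $\bff\cdot\nu=0$, and $\nabla\pn\cdot\nu=0$ (as $\pn\in\Wn$). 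Testing this equation with $\afunc(\roreln)\,w$ and integrating by parts in the viscous term yields, on the left-hand side, exactly $\tfrac12\ddt\|\sqrt{\afunc(\roreln)}\,\divbfun\|_{\Ltwo}^2+\mu\|\sqrt{\afunc(\roreln)/\romean}\,\nabla(\divbfun)\|_{\Ltwo}^2$, while the commutator of $\ddt$ with $\afunc(\roreln)$ (recall $\afunc'=2$), the term in which $\nabla$ falls on $\afunc(\roreln)$ inside $\nabla(\afunc(\roreln)w)$, and the forcing term account for three of the summands of $\rhsone$; what remains is the coupling term $-\intO(\Deltaromean\pn)\,\afunc(\roreln)\,w\dx$.

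The coupling term is where the Galerkin structure enters. Since $\pn\in\Wn$ and the $w_i$ are eigenfunctions of $-\Deltaromean$, also $\Deltaromean\pn\in\Wn$; hence $\maG$ — which holds in $\Wn$, i.e.\ $\rorelnt+\afunc(\roreln)\divbfun-\gfunc(\bfun)\perp\Wn$ — lets one replace $\afunc(\roreln)\divbfun$ by $\gfunc(\bfun)-\rorelnt$ inside the pairing against $\Deltaromean\pn$, giving $-\intO(\Deltaromean\pn)\gfunc(\bfun)\dx+\intO(\Deltaromean\pn)\rorelnt\dx$. In each of these two terms one substitutes $\pdG$ for $\pn$ (all test functions stay in $\Wn$, which licenses the substitution) and uses the cancellation $\Deltaromean\,\modL\roreln=2\alpha_0\bigl(\tau(-\Delta_N)^{y/2}\rorelnt+\eta(-\Delta_N)^{(y+1)/2}\roreln\bigr)$ — the inverse $(-\Deltaromean)^{-1}$ in the definition \eqref{L_almostoriginal} of $\modL$ being annihilated — together with self-adjointness of $-\Delta_N$ on $\Ltwo$ to split each fractional power in half. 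The $\rorelnt$-term thus produces $2\alpha_0\tau\|(-\Delta_N)^{y/4}\rorelnt\|_{\Ltwo}^2$ and $\alpha_0\eta\ddt\|(-\Delta_N)^{(y+1)/4}\roreln\|_{\Ltwo}^2$ (moved to the left-hand side) and, upon integrating by parts in $\intO\Deltaromean(\cmean^2\romean\bfunc(\roreln)\roreln)\rorelnt\dx$ and commuting $\cmean^2\romean$ through the weighted divergence, also $\tfrac12\ddt\|\cmean\sqrt{\bfunc(\roreln)}\,\nabla\roreln\|_{\Ltwo}^2$ (to the left-hand side) together with the two commutator summands of $\rhsone$ involving $\bfunc'=B/(2A)$ and $\nabla[\cmean^2\bfunc(\roreln)]+\cmean^2\bfunc(\roreln)\nabla\ln\romean$, and the $\hfunc$-summand $-\intO\romeaninv\nabla\hfunc(\bfun)\cdot\nabla\rorelnt\dx$ of $\rhstwo$. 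For the $\gfunc(\bfun)$-term one integrates by parts, uses the Ritz relation \eqref{GalProj} to replace $\gfunc(\bfun)$ by $\gprojection(\bfun)$ (legitimate since $\pn\in\Wn$), and substitutes $\pdG$, producing the remaining terms of $\rhstwo$. Summing all contributions yields \eqref{energy identity}.

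I expect the main difficulty to be the consistent bookkeeping of the finite-dimensional projections together with the non-commutativity of $-\Delta_N$, $-\Deltaromean$ and the multiplication operators $\cmean^2$ and $\romean$: one must keep careful track of which quantities lie in $\Wn$ (so that $L^2$-pairings against them only "see" the $\Wn$-projection of the other factor, enabling the pointwise use of $\maG$, $\pdG$ and of the Ritz relation — note that here the $L^2$-projection onto $\Wn$ coincides with the $-\Deltaromean$-spectral projection, which is what makes \eqref{GalProj} usable in this way), must exploit the exact cancellation $\Deltaromean(-\Deltaromean)^{-1}=\mathrm{Id}$ on mean-zero functions to turn the nonlocal definition of $\modL$ into a clean fractional operator, and must verify that every boundary term vanishes — which it does thanks to $\bfun\cdot\nu=0$, $\nabla(\divbfun)\cdot\nu=0$, $\nabla\roreln\cdot\nu=0$, $\nabla\pn\cdot\nu=0$, $\nabla\gprojection(\bfun)\cdot\nu=0$ and $\bff\cdot\nu=0$. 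The commutator terms featuring $\nabla[\cmean^2\bfunc(\roreln)]+\cmean^2\bfunc(\roreln)\nabla\ln\romean$ are precisely the price of admitting $x$-dependent coefficients $\cmean$ and $\romean$.
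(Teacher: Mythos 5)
Your argument reproduces the paper's proof in an algebraically repackaged form. The paper tests $\moG$ with $\bfvn=-\romeaninv\nabla(\afunc(\roreln)\divbfun)$, $\maG$ with $-\Deltaromean\pn$, $\pdG$ with $\Deltaromean\rorelnt$ and $-\Deltaromean\gprojection(\bfun)$, and then observes the pairwise cancellation of the $\nabla\pn$ couplings; you instead divide $\moG$ by $\romean$, take the divergence to obtain a scalar parabolic equation for $w=\divbfun$, test it with $\afunc(\roreln)w$, and push the coupling $-\intO(\Deltaromean\pn)\afunc w\dx$ through $\maG$ and $\pdG$ by exploiting $\Deltaromean\pn\in\Wn$. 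Modulo integration by parts these are the same manipulations, and your bookkeeping of the Galerkin-projection issues (substitutions licensed only after landing in $\Wn$, the annihilation $\Deltaromean(-\Deltaromean)^{-1}=-\mathrm{Id}$ turning $\modL$ into a clean fractional operator, the Ritz relation \eqref{GalProj} applied to $\nabla\pn$) matches the paper exactly. The nondegeneracy part is likewise identical.

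The one place where your reformulation buys you a small extra burden rather than a simplification is the viscous term. The paper keeps this term as $\mu\intO\nabla(\divbfun)\cdot\romeaninv\nabla(\afunc(\roreln)\divbfun)\dx$ and merely expands the gradient by the product rule, so no integration by parts — and hence no boundary contribution — is ever needed there. In your scalar recasting, $\mu\intO(\Deltaromean w)\,\afunc w\dx$ must be integrated by parts, and the boundary term vanishes only if $\nabla(\divbfun)\cdot\nu=0$ on $\partial\Omega$. You derive this from the normal trace of the momentum equation (using $\bfun_t\cdot\nu=0$, $\nabla\pn\cdot\nu=0$, $\bff\cdot\nu=0$), which is formally fine but uses more boundary regularity of $\nabla(\divbfun)$ than Lemma~\ref{lemma bfun} explicitly provides; the paper's route avoids touching that trace at all. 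Since the Galerkin iterate is in fact smooth enough for this to be harmless, this is a stylistic rather than substantive difference — but it is worth noticing that the paper's choice of test function was designed precisely to sidestep it.
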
	
\begin{proof}
Since $\afunc(\roreln)=1+2\roreln$ and $\bfunc(\roreln)=1+\frac{B}{2A} \roreln$, the bounds in \eqref{nondegeneracy} follow immediately by \eqref{r Linf smallness} if $r$ is small enough. The identity in \eqref{energy identity} is obtained by convenient testing of the problem that will lead to cancellations of several terms. We test equation $\moG$\ in \eqref{Galerkin approximate system} with 
	\[
	\bfvn= - \frac{1}{\romean}\nabla (\afunc(\roreln(t)) \nabla \cdot \bfun(t)),
	\]
	equation $\maG$  with $-\Deltaromean \pn(t)$, and equation $\pdG$\  with $\Deltaromean \rorelnt(t)$. We note that we are allowed to do this because $\bfvn \in \Ltwod$ and $-\Deltaromean \pn(t)$, $ \Deltaromean \rorelnt(t)\in \Wn$. Proceeding in this manner, integrating over $\Omega$, and integrating by parts in space yields
	
	\begin{equation}\label{var regularized tested}
		\begin{aligned}
			\begin{multlined}[t]\intO -(\romean \bfun_t+\nabla \pn-\mu\nabla(\nabla\cdot  \bfun)- \bff\bigr)\cdot
				\frac{1}{\romean}\nabla(\afunc(\roreln)\nabla\cdot\bfun) \dx
				+\intO \nabla \bigl( \rorelnt + \afunc(\roreln)\nabla\cdot\bfun -\gfunc(\bfun)\bigr)\cdot
				\frac{1}{\romean}\nabla \pn \dx \\
				\qquad-\intO \nabla \bigl(\pn-\cmean^2\romean\bfunc(\roreln)\,\roreln-\hfunc(\bfun)\bigr)\cdot
				\frac{1}{\romean}\nabla \rorelnt \dx
				+2\alpha_0\intO (-\Deltaromean)^{-1}\bigl(\tau(-\Delta)^{\frac{y}{2}} \rorelnt 
				+\eta (-\Delta)^{\frac{y+1}{2}} \roreln\bigr)(-\Deltaromean \rorelnt)\dx 
				=0 \end{multlined}
		\end{aligned}
	\end{equation}
	a.e. in time. Conveniently, the (space-integrated) terms $-\nabla \pn \cdot  \frac{1}{\romean}\nabla(\afunc(\roreln)\nabla\cdot\bfun)$  and $\nabla(\afunc(\roreln)\nabla\cdot\bfun) \cdot  \frac{1}{\romean}\nabla \pn$ as well as $  \nabla \rorelnt \cdot  \frac{1}{\romean}\nabla \pn$ and $-\nabla \pn \cdot  \frac{1}{\romean}\nabla \rorelnt$ cancel out and we are left with
	\begin{equation}\label{var regularized tested simplified}
		\begin{aligned}
			\begin{multlined}[t]\intO -(\romean\bfun_t-\mu\nabla(\nabla\cdot\bfun)-\bff\bigr)\cdot
				\frac{1}{\romean}\nabla(\afunc(\roreln)\nabla\cdot\bfun) \dx
			-\intO \nabla \gfunc(\bfun) \cdot
				\frac{1}{\romean}\nabla \pn \dx \\
				\qquad-\intO \nabla \bigl(-\cmean^2\romean
%\bcoef
\bfunc(\roreln)
\,\roreln-\hfunc(\bfun)\bigr)\cdot
				\frac{1}{\romean}\nabla \rorelnt \dx
				+2\alpha_0\intO (\tau(-\Delta)^{\frac{y}{2}} \rorelnt 
				+\eta (-\Delta)^{\frac{y+1}{2}} \roreln)\rorelnt\dx  =0. \end{multlined}
		\end{aligned}
	\end{equation}
	To transform the terms further, we can employ the following identities:
	\[
	\begin{aligned}
		-\int_\Omega
		\bfun_t \cdot  \nabla(\afunc(\roreln)\nabla\cdot\bfun) \dx
		=& \intO
		\afunc(\roreln) \nabla \cdot \bfun_t   \nabla\cdot\bfun \dx -\int_{\partial\Omega} (\afunc(\roreln)\nabla\cdot\bfun)
		\bfun_t \cdot\nu \dS\\
		=&\, \int_\Omega 
		\frac12\ddt |{\sqrt{\afunc(\roreln)}}\nabla\cdot\bfun|^2
		\dx -\frac12 \intO 
%\acoeft(\roreln) 
\afunc'(\roreln)\rorelnt
|\nabla \cdot \bfun|^2 \dx
	\end{aligned}
	\]
and
	\begin{equation}
		\begin{aligned}
			\mu	\intO  \nabla(\nabla\cdot\bfun)\cdot\frac{1}{\romean}\nabla(\afunc(\roreln)\divbfun) \dx = \mu \| \sqrt{\afunc(\roreln)/{\romean}}  \nabla(\nabla\cdot\bfun)\|^2_{\Ltwo}+\mu\intO \nabla(\divbfun)\cdot \frac{1}{\romean}\nabla \afunc(\roreln) \divbfun \dx,
		\end{aligned}
	\end{equation}
	as well as, with $\beta=\cmean^2\bfunc(\roreln)$,
	\[
	\nabla[\romean\beta\roreln]\cdot\frac{1}{\romean}\nabla \roreln_t
	=\frac12\ddt |\sqrt{\beta}\nabla\roreln|^2 -\frac12\beta_t|\nabla\roreln|^2
	+\roreln(\nabla\beta+\beta\nabla\ln\romean)\cdot\nabla\rorelnt,
	\]
where $\beta_t = \cmean^2 \frac{B}{2A}\rorelnt$.	In this way we obtain the energy identity
	\begin{equation}\label{enid2 reg}
		\begin{aligned}
			&\begin{multlined}[t]\frac12 \ddt \Bigl(\|
				{\sqrt{\afunc(\roreln)}}\nabla\cdot\bfun\|_{\Ltwo}^2 
				+ \|\cmean\sqrt{\bfunc(\roreln)}\, \nabla \roreln\|_{\Ltwo}^2\Bigr)
				+\mu\|\sqrt{{\afunc(\roreln)/\romean}}\nabla(\nabla\cdot\bfun)\|_{\Ltwo}^2\\
				+\alpha_0\Bigl(2\tau\|(-\Delta)^{\frac{y}{4}} \rorelnt\|_{\Ltwo}^2
				+\eta \ddt  \|(-\Delta)^{\frac{y+1}{4}} \roreln\|_{\Ltwo}^2\Bigr) \end{multlined}
			\\
			&= \begin{multlined}[t]-\intO 
				\frac{1}{\romean}\bff\cdot\nabla(\afunc(\roreln)\nabla\cdot\bfun) \dx
				+\intO \frac{1}{\romean}\nabla \gfunc(\bfun) \cdot\nabla \pn \dx
				- \intO \frac{1}{\romean}\nabla \hfunc(\bfun)\cdot\nabla \rorelnt \dx \\
				+ \intO \frac12\cmean^2 
\bfunc'(\roreln)\rorelnt
|\nabla\roreln|^2 \dx
				- \intO \roreln(\nabla [\cmean^2 \bfunc(\roreln)]+\cmean^2 \bfunc(\roreln)\nabla\ln\romean)\cdot\nabla\rorelnt \dx \\
				+\frac12 \intO 
%\acoeft(\roreln) 
\afunc'(\roreln)\rorelnt
|\nabla \cdot \bfun|^2 \dx
				+\mu\intO \frac{1}{\romean}\nabla(\nabla\cdot\bfun)\cdot \nabla \afunc(\roreln) \divbfun \dx:=\rhs.
			\end{multlined}
		\end{aligned} 
	\end{equation}
	Note that the term with $\nabla \pn$ on the right-hand side of \eqref{enid2 reg} cannot be controlled directly by the left-hand side terms so we would not be able to derive an energy estimate starting from \eqref{enid2 reg}. To mend this, we rewrite this term by additionally testing equation $\pdG$\ in \eqref{Galerkin approximate system} with $-\Deltaromean \gprojection(\bfun) \in \Wn$.
	We then have  
	\begin{equation}\label{nablap reg}
		\begin{aligned}
			\intO \frac{1}{\romean}\nabla \gfunc(\bfun) \cdot\nabla \pn \dx
			=&\, \intO \frac{1}{\romean}\nabla \gprojection(\bfun)  \cdot\nabla \pn \dx\\
			=&\,  \intO \Bigl(
			\frac{1}{\romean}\nabla \gprojection(\bfun) \cdot\nabla [\cmean^2\romean\bfunc(\roreln)\,\roreln-\hfunc(\bfun)]
			-2\alpha_0 \bigl(\tau(-\Delta)^{\frac{y}{2}} \rorelnt 
			+\eta (-\Delta)^{\frac{y+1}{2}} \roreln\bigr)\gprojection(\bfun)
			\Bigr)\dx.
		\end{aligned}
	\end{equation}
Using this identity, the right-hand side of \eqref{enid2 reg} can be rewritten as the sum $\rhs=\rhsone+\rhstwo$, where $\rhsone$ is defined in \eqref{rhsone} and $\rhstwo$ in \eqref{rhstwo}, to arrive at the claim. 
\end{proof}
\subsection{Energy estimate} \label{sec energy estimate}
Starting from the obtained identity in \eqref{energy identity}, we next derive an energy estimate, at first, on $[0, \Tn]$ and again under an assumption of uniform smallness of solutions. Concerning the regularity induced by the $y$-power damping terms on the left-hand side of \eqref{enid2 reg}, there are several requirements that we needed to take into account:
\begin{itemize}
	\item First of all, we need to obtain a bound on $\rorel$ from the $\eta$ term in \eqref{enid2 reg} whose control in its turn enables non-degeneracy of $\afunc(\roreln)=1+2 \roreln$ and $\bfunc(\roreln)=1+\frac{B}{2A} \roreln$. Thus we require that $2\tfrac{y+1}{4}>\frac{d}{2}$. 
	\item Secondly, $\rhsone$, given in \eqref{rhsone}, contains the gradient of $\rorel_t$, which we have to control by the left-hand side term $2 \alpha_0 \tau \|(-\Delta_N)^{\frac{y}{4}} \rorelnt\|^2_{\Ltwo}$ in \eqref{enid2 reg}, resulting in the requirement $2\tfrac{y}{4}\geq1$.
	\item    Thirdly, to be able to absorb the $\gfunc(\bfun)$ terms in $\rhstwo$, defined in \eqref{rhstwo}, by the left-hand side, we need an upper bound on $y$: $y \leq 3$.
\end{itemize}  
  Altogether, we thus assume that
\begin{equation} \label{assumptions y}
	y>d-1 \text{ and } 2 \leq y\leq 3. 
\end{equation}
As mentioned before, the condition $y \leq 3$ can be removed if $\gfunc \equiv 0$. The case $\gfunc=\hfunc \equiv 0$ is analysed in Section~\ref{sec vanishing viscosity} in a $\mu$-uniform manner for which the lower bound on $y$ has to be strengthened, however. \\
\indent In the analysis below, we use the Poincar\'{e}--Friedrichs inequality as well as elliptic regularity of the Neumann problem \cite[Theorem 4, p 217]{Mikhajlov1978}
to conclude existence of constants $C_s$, $\tilde{C}_s$, such that 
\[
\|\phi\|_{H^s(\Omega)}\leq
C_s \|(-\Delta_N)^{s/2} \phi\|_{L^2(\Omega)} 
\leq \tilde{C}_s \|\phi\|_{H^s(\Omega)}
\text{ for all }\phi\in H^s(\Omega), \ \intO \phi =0, \quad s\in  \left\{\frac{y}{2},\frac{y+1}{2}\right\}.
\]
We note that, under the assumptions \eqref{assumptions y} made on $y$, we have continuity of the embeddings 
\begin{equation}\label{embeddings_y}
	H^{\frac{y}{2}}(\Omega)\to \Hone \to \Lsix, \quad
	H^{\frac{y+1}{2}}(\Omega)\to \Linf\cap \Wonethree
\end{equation} 
for $d\in\{2,3\}$.
We next derive a uniform bound for the sum of the semi-discrete energy and dissipation functionals at time $t$ given by
\begin{equation}
	\begin{aligned}
		\calE(t) = \|\bfun(t)\|^2_{\Hdiv} 
		+ \|\nabla \roreln(t)\|^2_{\Ltwo} + \|\roreln(t) \|^2_{\Hyplusonetwo}
	\end{aligned}
\end{equation}
and
\begin{equation}
	\begin{aligned}
		\calD(t) = \intt  \left(\mu \|\nabla(\nabla\cdot\bfun(s))\|^2_{\Ltwo} + \|\rorelnt(s)\|^2_{\Hytwo}+ \|\nabla \Is p\|^2_{\Ltwo} \right)\ds,
	\end{aligned}
\end{equation}
at first, for $t \in [0, \Tn]$. In the subsequent step, we will use this result to bootstrap the existence and the energy bounds to $[0,T]$.
\begin{proposition} \label{prop: Galerkin energy estimate} Let the assumptions of Proposition~\ref{prop: Galerkin energy id} hold and let the approximate initial velocity $\bfunzero \in \Hdiv$ satisfy
	\[
	\bfunzero \rightarrow \bfuzero \ \text{ in } \Hdiv \quad \text{ as } \  n \rightarrow \infty.
	\]
 Let 
the condition \eqref{assumptions y} on $y$ as well as 
			\begin{equation}\label{smallness condition romean cmean}
		\begin{aligned}
			\begin{multlined}[t]
				\|\nabla [\cmean^2 \nabla \romean] \|_{\Ltwo}+\|\cmean^2 \nabla \romean\|_{\Lthree}+\|\nabla \ln \romean\|_{\Hyplusonetwo}< \delta_{\romean, \cmean}
			\end{multlined}
		\end{aligned}
	\end{equation}
hold. 
	Furthermore, assume that there exist $r>0$, independent of $n$, such that
	\begin{equation}\label{smallness condition acoef bcoef}
		\begin{aligned}
			\begin{multlined}[t]
				\|\roreln\|_{L^\infty(0,\Tn; \Linf)}+	
{\|\afunc'(\roreln)\rorelnt\|_{\LtwonLthree}}
+\| \afunc'(\roreln) \nabla \roreln\|_{\LinfnLthree} +
{\|\cmean^2 \bfunc'(\roreln)\rorelnt\|_{\LtwonLsix}}
\\
				+\|\nabla [\cmean^2 \bfunc(\roreln)]\|_{\LinfnLtwo}+\|\cmean^2 \bfunc(\roreln)\nabla\ln\romean\|_{\LinfnLtwo}< r
			\end{multlined}
		\end{aligned}
	\end{equation}
with 
$\afunc'(\roreln)=2$ and $\bfunc'(\roreln)=\frac{B}{2A}$.	Then for sufficiently small $r$ and sufficiently small $\deltaromeancmean$, independently of $n$, the following bound holds: 
	\begin{equation}\label{uniform energy bound Galerkin}
		\begin{aligned}
			&\esssup_{t \in (0,\Tn)}\calE(t) + \esssup_{t \in (0,\Tn)}\calD(t)\\
			\leq&\, \begin{multlined}[t] C_1(\romean) \exp(C_2\Tn) \Bigl(\|\nabla \cdot (\romean^{-1} \bff)\|^2_{\LtwoLtwo}+\|\It \bff\|^2_{\LtwoLtwo} 
				+ \|\bfuzero\|^2_{\Hdiv} 
				+ \|\cmean\sqrt{\rorelzero}\, \nabla \rorelzero\|^2_{\Ltwo}
					+\|\rorelzero \|^2_{\Hyplusonetwo}\\ +\| \nabla [\cmean^2\bfdzero \cdot \nabla \romean] \|^2_{\Ltwo}	\Bigr), \end{multlined}
		\end{aligned}
	\end{equation}
	where $C_1$ and $C_2$ do not depend on $\Tn$ or $n$. 
\end{proposition}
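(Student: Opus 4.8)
The plan is to integrate the energy identity \eqref{energy identity} over $(0,t)$ for $t\in(0,\Tn]$ and turn it into a Grönwall inequality for
\[
E(t):=\esssup_{s\le t}\calE(s)+\esssup_{s\le t}\calD(s).
\]
On the left-hand side, the non-degeneracy bounds \eqref{nondegeneracy}, the positivity $\inf\cmean^{2}>0$ (from $\Xcmean$) and $\inf(1/\romean)>0$ (from $\Xromean$), together with the elliptic-regularity/Poincar\'e estimate $\|\phi\|_{H^{s}(\Omega)}\lesssim\|(-\Delta_N)^{s/2}\phi\|_{\Ltwo}$ for zero-mean $\phi$ with $s\in\{\tfrac y2,\tfrac{y+1}2\}$ (applicable since $\roreln(t),\rorelnt(t)\in\Wn$ have zero mean), bound the integrated left-hand side of \eqref{energy identity} from below by a fixed positive multiple of $\|\nabla\cdot\bfun(t)\|_{\Ltwo}^{2}+\|\nabla\roreln(t)\|_{\Ltwo}^{2}+\|\roreln(t)\|_{\Hyplusonetwo}^{2}+\mu\intt\|\nabla(\nabla\cdot\bfun)\|_{\Ltwo}^{2}\ds+\intt\|\rorelnt\|_{\Hytwo}^{2}\ds$; the terms evaluated at $t=0$ are bounded, uniformly in $n$, by $\|\bfunzero\|_{\Hdiv}^{2}+\|\cmean\sqrt{\rorelzero}\,\nabla\rorelzero\|_{\Ltwo}^{2}+\|\rorelzero\|_{\Hyplusonetwo}^{2}$, using that $\rorelnzero$ is the $\Hyplusonetwo$-projection of $\rorelzero$ (so its norms do not increase) and that $\bfunzero\to\bfuzero$ in $\Hdiv$.

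The bulk of the argument is to estimate $\intt(\rhsone+\rhstwo)\ds$ so that every contribution is of one of three types: \textbf{(i)} a data term of the form appearing on the right of \eqref{uniform energy bound Galerkin}; \textbf{(ii)} a term $\theta\,E(t)$ with $\theta$ proportional to $r$ or $\deltaromeancmean$, hence absorbed on the left once these are small; \textbf{(iii)} a term $C\intt\omega(s)E(s)\ds$ with $\omega\in L^{1}(0,\Tn)$, to be handled by Grönwall. The forcing term $-\intO\afunc(\roreln)\nabla\cdot(\romean^{-1}\bff)\,\nabla\cdot\bfun\dx$ is split by Young's inequality into the data term $\|\nabla\cdot(\romean^{-1}\bff)\|_{\LtwoLtwo}^{2}$ and $\intt\|\nabla\cdot\bfun\|_{\Ltwo}^{2}\ds$. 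The cubic terms $\tfrac12\intO\cmean^{2}\bfunc'(\roreln)\rorelnt|\nabla\roreln|^{2}\dx$ and $\tfrac12\intO\afunc'(\roreln)\rorelnt|\nabla\cdot\bfun|^{2}\dx$ are bounded by H\"older with exponent triples $(6,3,2)$ and $(3,3,3)$, using $H^{\frac y2}(\Omega)\hookrightarrow\Lsix$ and $H^{\frac{y+1}2}(\Omega)\hookrightarrow\Linf\cap\Wonethree$ from \eqref{embeddings_y}, so that the small factors $\|\cmean^{2}\bfunc'(\roreln)\rorelnt\|_{\LtwonLsix}$, $\|\afunc'(\roreln)\rorelnt\|_{\LtwonLthree}$, $\|\afunc'(\roreln)\nabla\roreln\|_{\LinfnLthree}$ from \eqref{smallness condition acoef bcoef} are produced (and $\|\nabla\cdot\bfun\|_{\Lthree}^{2}\lesssim\|\nabla\cdot\bfun\|_{\Ltwo}(\|\nabla\cdot\bfun\|_{\Ltwo}+\|\nabla(\nabla\cdot\bfun)\|_{\Ltwo})$ keeps these in types (ii)--(iii)). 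The term $-\intO\roreln(\nabla[\cmean^{2}\bfunc(\roreln)]+\cmean^{2}\bfunc(\roreln)\nabla\ln\romean)\cdot\nabla\rorelnt\dx$ is handled by Cauchy--Schwarz, Young against $\varepsilon\|\rorelnt\|_{\Hytwo}^{2}$, and the smallness of $\|\nabla[\cmean^{2}\bfunc(\roreln)]\|_{\LinfnLtwo}$, $\|\cmean^{2}\bfunc(\roreln)\nabla\ln\romean\|_{\LinfnLtwo}$; and the viscous cross-term $\mu\intO\tfrac1\romean\nabla(\nabla\cdot\bfun)\cdot\afunc'(\roreln)\nabla\roreln\,\nabla\cdot\bfun\dx$ is split by Young into a piece absorbed by $\mu\|\nabla(\nabla\cdot\bfun)\|_{\Ltwo}^{2}$ and a remainder $\lesssim\mu\|\afunc'(\roreln)\nabla\roreln\|_{\Lthree}^{2}(\|\nabla\cdot\bfun\|_{\Ltwo}^{2}+\|\nabla(\nabla\cdot\bfun)\|_{\Ltwo}^{2})$, again controlled by the smallness of $\|\afunc'(\roreln)\nabla\roreln\|_{\LinfnLthree}$.

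The delicate part is the Ritz-projection block in $\rhstwo$: writing $-2\alpha_0\tau\intO(-\Delta_N)^{\frac y2}\rorelnt\,\gprojection(\bfun)\dx=-2\alpha_0\tau\intO(-\Delta_N)^{\frac y4}\rorelnt\,(-\Delta_N)^{\frac y4}\gprojection(\bfun)\dx$, this is absorbed into $2\alpha_0\tau\|(-\Delta_N)^{\frac y4}\rorelnt\|_{\Ltwo}^{2}$ by Cauchy--Schwarz once one invokes the projection-stability bounds of Lemma~\ref{lemGprojg}, and likewise the $\eta$-term and $\intO\tfrac1\romean\nabla\gprojection(\bfun)\cdot\nabla[\cmean^{2}\romean\bfunc(\roreln)\roreln]\dx$ use \eqref{stability bounds grpojection}; what makes this possible is the smallness $\|\nabla\ln\romean\|_{\Hyplusonetwo}<\deltaromeancmean$ in \eqref{smallness condition romean cmean} together with the $n$-independence of the constants in Lemma~\ref{lemGprojg} (the $\bfun$-dependence on the right-hand sides there being absorbed into $E(t)$). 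The remaining $\hfunc(\bfun)$-terms $-\intO\tfrac1\romean\nabla\hfunc(\bfun)\cdot\nabla\rorelnt\dx$ and $-\intO\tfrac1\romean\nabla\gprojection(\bfun)\cdot\nabla\hfunc(\bfun)\dx$ are expanded through $\hfunc(\bfun)=\cmean^{2}(\It\bfun+\bfdzero)\cdot\nabla\romean$: each summand containing $\It\bfun$ carries a factor $t\le T$ and $\|\bfun\|_{\LinfLtwo}\lesssim\calE^{1/2}$ (type (iii)), while the $\bfdzero$-summands reproduce the data quantity $\|\nabla[\cmean^{2}\bfdzero\cdot\nabla\romean]\|_{\Ltwo}^{2}$ of \eqref{uniform energy bound Galerkin}, with $\|\nabla[\cmean^{2}\nabla\romean]\|_{\Ltwo}$ controlled by \eqref{smallness condition romean cmean}.

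Finally one closes the two missing pieces of $E$: testing $\moG$ in \eqref{Galerkin approximate system} with $\bfun$ gives $\tfrac12\ddt\|\sqrt{\romean}\bfun\|_{\Ltwo}^{2}+\mu\|\nabla\cdot\bfun\|_{\Ltwo}^{2}=\intO\pn\,\nabla\cdot\bfun\dx+\intO\bff\cdot\bfun\dx$, where $\|\pn\|_{\Ltwo}$ is controlled through $\pdG$ and elliptic regularity of $-\Deltaromean$ by $\lesssim\|\roreln\|_{\Ltwo}+\|\rorelnt\|_{\Hytwo}+\|\roreln\|_{\Hyplusonetwo}+\|\hfunc(\bfun)\|_{\Ltwo}$ (the bound $\modL\roreln\in\Ltwo$ being where $y\le3$ enters), yielding $\|\bfun\|_{\LinfLtwo}$; and integrating $\moG$ in time, the identity $\nabla\Is\pn=\Is\bff-\romean(\bfun-\bfunzero)+\mu\,\Is\nabla(\nabla\cdot\bfun)$ bounds $\|\nabla\It\pn\|_{\LinfLtwo}$, hence the $\intt\|\nabla\Is p\|_{\Ltwo}^{2}\ds$ contribution to $\calD$, by $\|\It\bff\|_{\LtwoLtwo}$, $\|\bfun\|_{\LinfLtwo}$, $\|\bfunzero\|_{\Ltwo}$ and $\sqrt{\mu T}\,\calD^{1/2}$. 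Summing all the estimates, fixing the Young parameters, and choosing $r$ and $\deltaromeancmean$ small enough that the type-(ii) terms are absorbed with a positive coefficient remaining on the left, produces an inequality $E(t)\le(\text{data})+C\intt\omega(s)E(s)\ds$ with $\omega\in L^{1}$, and Grönwall's inequality yields \eqref{uniform energy bound Galerkin} with $C_{1}$, $C_{2}$ independent of $n$ and $\Tn$. I expect the main obstacle to be precisely the $\nabla\pn$ contribution $\intO\tfrac1\romean\nabla\gfunc(\bfun)\cdot\nabla\pn\dx$: it cannot be bounded directly by the left-hand side of \eqref{energy identity} and only becomes tractable after the rewriting carried out in Proposition~\ref{prop: Galerkin energy id}, so the whole estimate hinges on the $n$-independence in Lemma~\ref{lemGprojg} and on the smallness of $\|\nabla\ln\romean\|_{\Hyplusonetwo}$; the accompanying bookkeeping — checking that \eqref{smallness condition romean cmean}--\eqref{smallness condition acoef bcoef} are exactly matched to the non-sign-definite terms and that no constant secretly depends on $n$ — is the labour-intensive core of the proof.
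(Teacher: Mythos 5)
Your proposal follows the same overall route as the paper: integrate the energy identity \eqref{energy identity} in time, split the right-hand side into data terms, terms absorbed for small $r$ and $\deltaromeancmean$, and Gr\"onwall-type terms, estimate the $\rhsone$ cubic and viscous cross-terms by H\"older/Young against the smallness constants, control the Ritz-projection block of $\rhstwo$ via Lemma~\ref{lemGprojg} and the smallness of $\|\nabla\ln\romean\|_{\Hyplusonetwo}$, bound $\|\nabla\It\pn\|_{\LtwoLtwo}$ by testing the time-integrated momentum equation with $\nabla\It\pn$, and close with Gr\"onwall. The one genuine addition is your separate test of $\moG$ with $\bfun$ (giving $\tfrac12\ddt\|\sqrt{\romean}\bfun\|^2_{\Ltwo}+\mu\|\nabla\cdot\bfun\|^2_{\Ltwo}=\intO\pn\,\nabla\cdot\bfun\dx+\intO\bff\cdot\bfun\dx$) to recover the $\|\bfun\|_{\LinfLtwo}$ contribution to $\calE(t)$: the paper's energy identity controls only $\|\sqrt{\afunc(\roreln)}\nabla\cdot\bfun\|_{\Ltwo}$, and the paper does not carry out this step explicitly, so your addition is a useful clarification; it comes at the modest cost of needing $\|\pn\|_{\Ltwo}\lesssim\|\roreln\|_{\Hyplusonetwo}+\|\rorelnt\|_{\Hytwo}+\|\hfunc(\bfun)\|_{\Ltwo}$, and the Young parameter in $\intt\intO\pn\,\nabla\cdot\bfun\dx$ must be coordinated with a small overall multiplier so as not to spoil the absorption of $\|\rorelnt\|^2_{\LtwotHytwo}$ into the $\tau$-damping term, as you note with the free parameter $\lambda$ in the corresponding step for $\nabla\It\pn$. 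One small inaccuracy in the bookkeeping: the paper attributes the constraint $y\leq3$ to the absorption of the $\gfunc(\bfun)$ terms in $\rhstwo$ by the left-hand side (see the discussion preceding \eqref{assumptions y}); your extra test-with-$\bfun$ step happens to use $y\le3$ again to give $\modL\roreln\in\Ltwo$, but that is an additional instance, not the paper's stated reason.
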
	
Note that the smallness assumption on the gradients of $\cmean$ and $\romean$ made in \eqref{smallness condition romean cmean} only restricts their variations but still allows for large absolute values of these quantities. 
\begin{proof}
	We start from the derived energy identity in \eqref{enid2 reg} and estimate the right-hand side terms within $\rhsone$ and $\rhstwo$. \\
	
\noindent \underline{Estimate of $\rhsone$}:	The time integral of the first right-hand side term $\rhsone$ can be bounded using H\"older's inequality and the fact that $a'(\roreln)=2$ as follows: 
	\begin{equation}\label{rhsone I}
		\begin{aligned}
			\int_0^t \rhsone(s)\ds
			\leq&\, \begin{multlined}[t] 
				\|\nabla \cdot (\romean^{-1} \bff)\|_{\LoneLtwo} \olacoef \|\divbfun \|_{\LinftLtwo}
				+ 
				\frac12\|\cmean^2 \bfunc'(\roreln)\rorelnt\|_{\LtwotLsix}\|\nabla\roreln\|_{\LtwotLthree}\|\nabla\roreln\|_{\LinftLtwo}
				\\	+\|\roreln\|_{\LtwotLinf} \|\nabla\rorelnt\|_{\LtwotLtwo} \left( \|\nabla [\cmean^2 \bfunc(\roreln)]\|_{\LinftLtwo}+\|\cmean^2 \bfunc(\roreln)\nabla\ln\romean\|_{\LinftLtwo} \right) \\
				+
{\|\rorelnt\|_{\LtwotLthree}}
\|\divbfun\|_{\LinftLtwo}\|\divbfun\|_{\LtwotLsix} 
				\\	+2\mu\|1/\romean\|_{\Linf}\|\nabla \roreln\|_{\LinftLthree}\|\divbfun\|_{\LtwotLsix}\|\nabla(\divbfun)\|_{\LtwotLtwo}
			\end{multlined}
		\end{aligned}
	\end{equation}
	for $t \in [0,\Tn]$.  By employing the assumed $r$ bound and Young's inequality, we have
	\begin{equation}
		\begin{aligned}
			\frac12\|\cmean^2 \bfunc'(\roreln)\rorelnt\|_{\LtwotLsix}\|\nabla\roreln\|_{\LtwotLthree}\|\nabla\roreln\|_{\LinftLtwo}
			\leq&\, \frac12 r \cdot\|\nabla\roreln\|_{\LtwotLthree}\|\nabla\roreln\|_{\LinftLtwo} \\
			\leq&\,  r^2 \|\roreln\|^2_{\LinftHyplusonetwo} + \frac14 C(\Omega) \|\roreln\|^2_{\LtwotHyplusonetwo}
		\end{aligned}
	\end{equation}
since $\|\cmean^2 \bfunc'(\roreln)\rorelnt\|_{\LtwonLsix} \leq r$.	Similarly,
	\begin{equation}
		\begin{aligned}
			&\|\roreln\|_{\LtwotLinf} \|\nabla\rorelnt\|_{\LtwotLtwo} \left( \|\nabla [\cmean^2 \bfunc(\roreln)]\|_{\LinftLtwo}+\|\cmean^2 \bfunc(\roreln)\nabla\ln\romean\|_{\LinftLtwo} \right) \\
			\leq&\, \begin{multlined}[t]
			\|\roreln\|_{\LtwotLinf} \|\nabla\rorelnt\|_{\LtwotLtwo} \cdot r
							\end{multlined}\\
				\leq&\,  \oneoverfoureps   C(\Omega)\|\roreln\|^2_{\LtwotHyplusonetwo}+\eps r^2 \|\rorelnt\|^2_{\LtwotHytwo}
 		\end{aligned}
	\end{equation}
for any $\eps>0$. Note that by the first embedding in \eqref{embeddings_y}, we have 
\begin{equation}
	\|\divbfun\|_{\LtwotLsix} \lesssim \|\divbfun\|_{\LtwotHone} \lesssim \|\divbfun\|_{\LtwotLtwo}+\|\nabla(\divbfun)\|_{\LtwotLtwo}.
\end{equation} Thus, we can estimate the last two terms in \eqref{rhsone I} using also the assumed $r$ bound as follows:
\begin{equation}
	\begin{aligned}
	&\begin{multlined}[t]	
{\|\rorelnt\|_{\LtwotLthree}}
\|\divbfun\|_{\LinftLtwo}\|\divbfun\|_{\LtwotLsix} \\
		\hspace*{3cm}	+2\mu \|1/\romean\|_{\Linf}\|\nabla \roreln\|_{\LinftLthree}\|\divbfun\|_{\LtwotLsix}\|\nabla(\divbfun)\|_{\LtwotLtwo} \end{multlined}\\
		\lesssim&\,\begin{multlined}[t]  r\|\divbfun\|_{\LinftLtwo}(\|\divbfun\|_{\LtwotLtwo} +\|\nabla \divbfun\|_{\LtwotLtwo})\\ \hspace*{3cm}
		+\mu\|1/\romean\|_{\Linf}r(\|\divbfun\|_{\LtwotLtwo} +\|\nabla \divbfun\|_{\LtwotLtwo})\|\nabla(\divbfun)\|_{\LtwotLtwo}.  \end{multlined}
	\end{aligned}
\end{equation}
Then by applying Young's inequality, we obtain
\begin{equation}
	\begin{aligned}
		&\begin{multlined}[t]	
			{\|\rorelnt\|_{\LtwotLthree}}
			\|\divbfun\|_{\LinftLtwo}\|\divbfun\|_{\LtwotLsix} \\
			\hspace*{3cm}	+2\mu \|1/\romean\|_{\Linf}\|\nabla \roreln\|_{\LinftLthree}\|\divbfun\|_{\LtwotLsix}\|\nabla(\divbfun)\|_{\LtwotLtwo} \end{multlined}\\
		\lesssim&\, \begin{multlined}[t] r^2 \|\divbfun\|_{\LinftLtwo}^2+ (\|\divbfun\|^2_{\LtwotLtwo} +\eps\|\nabla \divbfun\|^2_{\LtwotLtwo})+ \mu r\|\nabla \divbfun\|^2_{\LtwotLtwo}\\ \hspace*{3cm}
			+\|1/\romean\|^2_{\Linf}\|\divbfun\|^2_{\LtwotLtwo}+ \mu^2 r^2 \|\nabla \divbfun\|^2_{\LtwotLtwo} \end{multlined}	
	\end{aligned}
\end{equation}
for any $\eps>0$. Note that the presence of the $\eps$ term above will lead to a non-uniform estimate in $\mu$. In Section~\ref{sec vanishing viscosity}, we discuss a way to mend this by requiring more regularity from $\rorelnt$. By employing the derived bounds in \eqref{embeddings_y}, we arrive at
	\begin{equation}\label{est rhs1}
		\begin{aligned}
			\int_0^t \rhsone(s)\ds
			\lesssim&\, \begin{multlined}[t] 
				\frac{1}{4\eps}	\olacoef^2\|\nabla \cdot (\romean^{-1} \bff)\|^2_{\LoneLtwo} +\eps \|\divbfun \|^2_{\LinftLtwo}
				\\	+ 
			\|\roreln\|^2_{\LtwotHyplusonetwo}
				+  r^2 \|\roreln\|_{\LinftHyplusonetwo}+\eps r^2 \|\rorelnt\|_{\LtwotHytwo} 
				+r^2\|\divbfun\|_{\LinftLtwo}\\
					+(1+\|1/\romean\|^2_{\Linf})\|\divbfun\|^2_{\LtwotLtwo}+ (\mu r (1+\mu r) +\eps) \|\nabla(\divbfun)\|^2_{\LtwotLtwo} )
			\end{multlined}
		\end{aligned}
	\end{equation}
	for any $\eps>0$, where the hidden constant does not depend on $n$. The $\roreln$ and $\bfun$ terms on the right-hand side above will be either absorbed for small enough $\eps$ and $r$ or tackled via \Gronwall's inequality in the final stages of the proof. \\
	
	\noindent \underline{Estimate of $\rhstwo$}: Next, we estimate the time integral of $\rhstwo$, given in \eqref{rhstwo}, by employing H\"older's inequality as follows:
	\begin{equation}
		\begin{aligned}
			\intt\rhstwo(s)\ds
			\leq&\, \begin{multlined}[t]
				\|1/\romean\|_{\Linf} \Bigl\{	\Bigl(
				\|\nabla \hfunc(\bfun)\|_{\LtwotLtwo} \|\nabla\rorelnt\|_{\LtwotLtwo}
				\\	+\|\nabla \gprojection(\bfun)\|_{\LtwotLtwo}\|\nabla[\cmean^2\romean\bfunc(\roreln)\,\roreln]\|_{\LtwotLtwo}
				+\|\nabla \gprojection(\bfun)\|_{\LtwotLtwo}\|\nabla \hfunc(\bfun)\|_{\LtwotLtwo}\Bigr)\\
				+2\alpha_0 \Bigl(
				\tau\|(-\Delta)^{\frac{y}{4}} \rorelnt\|_{\LtwotLtwo} \|(-\Delta)^{\frac{y}{4}}\gprojection(\bfun)\|_{\LtwotLtwo}
				\\	+\eta \|(-\Delta)^{\frac{y+1}{4}} \roreln\|_{\LinftLtwo} \|(-\Delta)^{\frac{y+1}{4}}\gprojection(\bfun)\|_{\LonetLtwo}
				\Bigr) \Bigr\}.
			\end{multlined}
		\end{aligned}
	\end{equation}
	By employing Young's inequality we obtain 
	\begin{equation} \label{est rhs2}
		\begin{aligned}
			\intt\rhstwo(s)\ds
			\leq&\, \begin{multlined}[t]
				\|1/\romean\|_{\Linf} \Bigl\{	\Bigl(
				\frac{1}{4\eps} \|\nabla \hfunc(\bfun)\|^2_{\LtwotLtwo} +\eps\|\nabla\rorelnt\|^2_{\LtwotLtwo}
				\\	+\eps\|\nabla \gfunc(\bfun)\|^2_{\LtwotLtwo}+\oneoverfoureps\|\nabla[\cmean^2\romean\bfunc(\roreln)\,\roreln]\|^2_{\LtwotLtwo}
				+\eps\|\nabla \gfunc(\bfun)\|_{\LtwotLtwo}^2\\+\oneoverfoureps\|\nabla \hfunc(\bfun)\|^2_{\LtwotLtwo}\Bigr)
				+2\alpha_0 \Bigl(
				\eps \tau^2\|(-\Delta)^{\frac{y}{4}} \rorelnt\|^2_{\LtwotLtwo}+\oneoverfoureps \|(-\Delta)^{\frac{y}{4}}\gprojection(\bfun)\|^2_{\LtwotLtwo}
				\\	+\eta^2 \eps \|(-\Delta)^{\frac{y+1}{4}} \roreln\|^2_{\LinftLtwo} +\frac{1}{4\eps}\|(-\Delta)^{\frac{y+1}{4}}\gprojection(\bfun)\|^2_{\LonetLtwo}
				\Bigr) \Bigr\}
			\end{multlined}
		\end{aligned}
	\end{equation}
	for any $\eps>0$.   We further have
	\begin{equation}
	\begin{aligned}
		\eps \|\nabla \gfunc(\bfun)\|^2_{\LtwotLtwo} =& \eps \|\nabla (\bfun \cdot \nabla \ln \romean)\|^2_{\LtwotLtwo} \\
		\leq&\, 2\eps \|\nabla \bfun\|^2_{\LtwotLsix}\|\nabla \ln \romean\|^2_{\Lthree}+ 2\eps\|\bfun\|^2_{\LtwotLinf} \|\nabla(\nabla \ln \romean)\|^2_{\Ltwo}
	\end{aligned}	
	\end{equation}
	and
		\begin{equation}
		\begin{aligned}
			\oneoverfoureps \|\nabla \hfunc(\bfun)\|^2_{\LtwotLtwo} \leq&\, \begin{multlined}[t]	\frac{3}{4\eps} \|\nabla [\cmean^2 \nabla \romean] \|^2_{\Ltwo}\|\Is \bfun\|^2_{\LtwotLinf}+	\frac{3}{4\eps}\|\cmean^2 \nabla \romean\|^2_{\Lthree}\|  \Is \nabla \bfun\|^2_{\LtwotLsix}\\+\frac{3}{4\eps}\| \nabla [\cmean^2\bfdzero \cdot\nabla \romean] \|^2_{\LtwotLtwo}
				\end{multlined}
		\end{aligned}	
	\end{equation}
	and the arising $\bfun$ terms on the right-hand side can be absorbed by $\mu \intt \|\nabla(\nabla\cdot\bfun(s))\|^2_{\Ltwo} \ds$ for sufficiently small $\eps>0$ and $\deltaromeancmean$. Furthermore, 
	\begin{equation}
		\begin{aligned}
			& \oneoverfoureps\|\, \nabla[\cmean^2\romean\bfunc(\roreln)\,\roreln]\,\|^2_{\LtwotLtwo}\\
			\leq&\, \oneovertwoeps	\|\nabla[\cmean^2\romean] \bfunc(\roreln)+ \cmean^2 \romean \nabla \tfrac{B}{2A} \roreln+\cmean^2\romean\tfrac{B}{2A}\nabla \roreln\|^2_{\LinftLtwo}\|\roreln\|^2_{\LtwotLinf}+\oneovertwoeps  \|\cmean^2\romean\bfunc(\roreln)\|^2_{\LinftLinf}\|\nabla \roreln\|^2_{\LtwotLtwo}.
		\end{aligned}
	\end{equation}
	From here
		\begin{equation}
		\begin{aligned}
			& \oneoverfoureps\|\, \nabla[\cmean^2\romean\bfunc(\roreln)\,\roreln]\,\|^2_{\LtwotLtwo}\\
			\lesssim&\,\begin{multlined}[t] \left( \|\nabla[\cmean^2\romean]\|_{\Linf}^2 (1+\|\tfrac{B}{2A}\|_{\Linf}r)^2+\| \cmean^2 \romean \nabla \tfrac{B}{2A}\|_{\Lthree}^2r^2
				+\|\cmean^2 \romean \tfrac{B}{2A}\|^2_{\Linf}r^2 \right)\|\roreln\|^2_{\LtwotLinf}\\+ \|\cmean^2 \romean\|^2_{\Linf}(1+r)^2 \|\nabla \roreln\|^2_{\LtwotLtwo}
				\end{multlined}
		\end{aligned}
	\end{equation}
	and these terms can be handled via \Gronwall's inequality. \\ 
	\indent We can use the stability of $\gprojection(\bfun)$ according to Lemma~\ref{lemGprojg} to estimate 
	\begin{equation}
		\begin{aligned}
			&\oneoverfoureps \|(-\Delta)^{\frac{y}{4}}\gprojection(\bfun)\|^2_{\LtwotLtwo}+ \frac{1}{4\eps}\|(-\Delta)^{\frac{y+1}{4}}\gprojection(\bfun)\|^2_{\LonetLtwo} \\
			\leq&\, C \|\nabla\ln\romean\|_{\Linf\cap\Hyhalf}\|\bfun\|_{L^2_t(\Linf\cap\Hyhalf)}+C \sqrt{T}\|\nabla\ln\romean\|_{\Linf\cap\Hyplusonetwo}\|\bfun\|_{L^2_t(\Linf\cap\Hyplusonetwo)},
		\end{aligned}
	\end{equation}
	where $C$ does not depend on $n$, and absorb these terms for sufficiently small $\|\nabla\ln\romean\|_{\Linf\cap H^{\frac{y+1}{2}}(\Omega)}$ (that is, $\delta_{\romean, \cmean}$). \\
	
	\noindent \underline{Combining the bounds}:	By employing \eqref{est rhs1} and \eqref{est rhs2} in the time-integrated identity \eqref{energy identity}, taking the supremum over $t\in(0,\tau)$ for $\tau \in (0, \Tn)$ and reducing $\eps$ and $r$ (independently of $n$), we end up with
	\begin{equation}\label{enest}
		\begin{aligned}
			&\esssup_{t \in (0,\tau)}\calE(t) +  \int_0^\tau  \left(\mu \|\nabla(\nabla\cdot\bfun(s))\|^2_{\Ltwo} + \|(-\Delta_N)^{\frac{y}{4}} \, \rorelnt(s)\|^2_{\Ltwo} \right)\ds \\
			\leq&\, \begin{multlined}[t]C \Bigl(\|\bfuzero\|^2_{\Hdiv} 
				+ \|\cmean\sqrt{\rorelzero}\, \nabla \rorelzero\|^2_{\Ltwo}
				+\|\rorelzero \|^2_{\Hyplusonetwo}+\frac{1}{4\eps}	\|\nabla \cdot (\romean^{-1} \bff)\|^2_{\LoneLtwo}\\+\|\roreln\|^2_{L^2(0,\tau; \Hyplusonetwo)}+(1+\|1/\romean\|^2_{\Linf})\|\divbfun\|^2_{L^2(0, \tau; \Ltwo)} +\Tn\| \nabla [\cmean^2\bfdzero \cdot \nabla \romean] \|^2_{\Ltwo}\Bigr) \end{multlined}
		\end{aligned}
	\end{equation}
	for $\tau \in (0, \Tn)$, where $C$ does not depend on $\Tn$ or $n$.  Above, we have also used the boundedness of approximate initial data:
	\begin{equation}
		\begin{aligned}
			\|\nabla\cdot\bfunzero\|_{\Ltwo} \lesssim\|\bfuzero\|_{\Hdiv}, \quad \|\rorelnzero\|_{\Hyplusonetwo} \lesssim \|\rorelzero\|_{\Hyplusonetwo}.
		\end{aligned}
	\end{equation}
\indent Estimate \eqref{enest} does not contain a bound on $\pn$, which we obtain in the final step of the proof. 
	To this end, we test the time-integrated version of $\moG$\ with $\nabla(\It \pn) \in \Ltwod$ for $t \in [0, \Tn]$, which yields, after integration over $\Omega$,
	\begin{equation}
		\begin{aligned}
			\begin{multlined}[t] \int_\Omega \Bigl\{\bigl(\romean(\bfun-\bfunzero)+\nabla \It \pn-\mu\nabla(\nabla\cdot \It \bfu)-\It\bff\bigr)\cdot \nabla(\It \pn)\dx =0.
			\end{multlined}
		\end{aligned}
	\end{equation}
	From here we obtain
	\begin{equation}\label{est_Itp}
		\begin{aligned}
			\|\nabla \It \pn\|_{L^2(0, \tau; \Ltwo)}\leq&\, \begin{multlined}[t] \|\It \bff+\mu\nabla(\nabla\cdot \It \bfun)-\romean(\bfun-\bfunzero)\|_{L^2(0, \tau; \Ltwo)} 
					\end{multlined}\\
					\lesssim&\, \|\It \bff\|_{\LtwoLtwo}	+ \mu \|\nabla(\nabla\cdot \It \bfun)\|_{L^2(0, \tau; \Ltwo)}+\|\romean\|_{\Linf}\|\bfun\|_{L^2(0, \tau; \Ltwo)}+ \|\romean\|_{\Linf}\|\bfunzero\|_{\Ltwo}.
		\end{aligned}
	\end{equation}
	Note that we cannot obtain a bound on $\nabla \pn$ from $\moG$\ because we lack a bound on $\bfunt$. Squaring this estimate, multiplying it by $\lambda>0$ and adding it to \eqref{enest} with $\lambda$ sufficiently small (independently of $n$) leads to
		\begin{equation}\label{enest}
		\begin{aligned}
			&\esssup_{t \in (0,\tau)}\calE(t) +  \esssup_{t \in (0,\tau)}\calD(t) \\
			\leq&\, \begin{multlined}[t]C \Bigl((1+\|\romean\|^2_{\Linf})	 \|\bfuzero\|^2_{\Hdiv} 
				+ \|\cmean\sqrt{\rorelzero}\, \nabla \rorelzero\|^2_{\Ltwo}
				+\|\rorelzero \|^2_{\Hyplusonetwo}+	\|\nabla \cdot (\romean^{-1} \bff)\|^2_{\LoneLtwo}+\|\It \bff\|^2_{\LtwoLtwo} \\+\|\romean\|^2_{\Linf}\|\bfun\|^2_{L^2(0, \tau; \Ltwo)}+\|\roreln\|^2_{L^2(0,\tau; \Hyplusonetwo)}+(1+\|1/\romean\|_{\Linf})\|\divbfun\|^2_{L^2(0, \tau; \Ltwo)} \\+\Tn\| \nabla [\cmean^2\bfdzero \cdot \nabla \romean] \|^2_{\Ltwo}\Bigr) \end{multlined}
		\end{aligned}
	\end{equation}
	for $t \in [0, \Tn]$, where the constant $C$ does not depend on $n$. 
	By employing \Gronwall's inequality, we arrive at the claimed estimate.
\end{proof}

\subsection[\texorpdfstring{Extending the existence interval to [0,T]}{Extending the existence interval to [0,T]}]{Extending the existence interval to $[0,T]$}
Equipped with a uniform bound in \eqref{uniform energy bound Galerkin}, we can now extend the existence interval of Galerkin approximations to $[0,T]$. We do so by proving that for small enough data, the uniform boundedness assumption made in \eqref{smallness condition acoef bcoef} holds.
\begin{proposition}
	Let the assumptions of Lemma~\ref{lemma bfun} and Proposition~\ref{prop roreln pn} hold. Let assumption
\eqref{assumptions y} on $y$
as well as assumption
\eqref{smallness condition romean cmean} on the smallness of gradients of $\romean$ and $\cmean^2$ hold with the bound $\deltaromeancmean$. Further, let $(\bfun, \roreln, \pn)$ be the solution of \eqref{Galerkin approximate system} on $[0, \Tn]$.
Then there exists $\delta>0$, independent of $n$, such that if
	\begin{equation} \label{smallness condition data}
		\|\rorelzero\|^2_{\Hyplusonetwo} +\|\bfuzero\|^2_{\Hdiv} +\|\bfdzero\|^2_{\Linf\cap\Hone}+ \|\bff\|^2_{\Xf} \leq \delta,
	\end{equation}
and $\deltaromeancmean$ is small enough, independent of $n$, then the following uniform bound holds:
	\begin{equation}
		\begin{aligned}
			\begin{multlined}[t]
				\calL(\roreln, \bfun)(t) :=	\|\roreln(t)\|_{\Linf}+	2\|\rorelnt\|_{L^2(0,t; \Lthree)}+2\|\nabla \roreln(t)\|_{\Lthree} + \frac12 \|(B/A) c_0^2  \rorelnt\|_{L^2(0,t; \Lsix)}\\
				+\|\nabla [\cmean^2 \bfunc(\roreln)](t)\|_{\Ltwo}+\|\cmean^2 \bfunc(\roreln)(t)\nabla\ln\romean\|_{\Ltwo}< r
			\end{multlined}
		\end{aligned}
	\end{equation}
	for all $t \in [0,\Tn]$. 
	Consequently, $\Tn=T$ can be chosen independent of $n$.
\end{proposition}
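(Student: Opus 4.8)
The plan is to combine a continuity (bootstrap) argument with the uniform energy estimate of Proposition~\ref{prop: Galerkin energy estimate}. By the regularity of the Galerkin solution from Lemma~\ref{lemma bfun} and Proposition~\ref{prop roreln pn} ($\roreln\in H^1(0,\Tn;\Wn)$, $\pn\in L^2(0,\Tn;\Wn)$, $\bfun\in\Xumu$), the map $t\mapsto\calL(\roreln,\bfun)(t)$ is continuous on $[0,\Tn]$. At $t=0$ all the time-integral contributions to $\calL$ vanish, so $\calL(\roreln,\bfun)(0)$ is controlled, through the embeddings \eqref{embeddings_y}, by $\|\rorelnzero\|_{\Hyplusonetwo}\lesssim\|\rorelzero\|_{\Hyplusonetwo}$, by $\|\bfdzero\|_{\Linf\cap\Hone}$, and by the small variations of $\romean$ and $\cmean^2$ measured by $\deltaromeancmean$; hence, for $\delta$ and $\deltaromeancmean$ small enough, $\calL(\roreln,\bfun)(0)<r/2$. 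I would then set
\[
\Tn^\sharp:=\sup\Bigl\{\,t\in[0,\Tn]\ :\ \calL(\roreln,\bfun)(s)\le r\ \text{ for all }s\in[0,t]\,\Bigr\}\in(0,\Tn].
\]

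On $[0,\Tn^\sharp]$ the smallness hypothesis \eqref{smallness condition acoef bcoef} of Proposition~\ref{prop: Galerkin energy estimate} holds with this $r$, so \eqref{uniform energy bound Galerkin} (with $\Tn$ replaced by $\Tn^\sharp$) gives $\esssup_{(0,\Tn^\sharp)}\calE+\esssup_{(0,\Tn^\sharp)}\calD\le C_\ast\Sigma$, where $C_\ast=C_1(\romean)\exp(C_2T)$ is independent of $n$ and $\Sigma$ is the data expression on the right-hand side of \eqref{uniform energy bound Galerkin}. Using $\romean\in\Xromean$, $\cmean^2\in\Xcmean$, the bounds \eqref{smallness condition data}, and the identity $\nabla[\cmean^2\bfdzero\cdot\nabla\romean]=\nabla[\cmean^2\nabla\romean]\,\bfdzero+\cmean^2\nabla\romean\cdot\nabla\bfdzero$ combined with \eqref{smallness condition romean cmean}, one checks $\Sigma\lesssim\delta+\deltaromeancmean^2$, hence $\calE,\calD\lesssim\delta+\deltaromeancmean^2$ on $[0,\Tn^\sharp]$ with an implicit constant independent of $n$ and of $\Tn^\sharp$.

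Next I would re-estimate each summand of $\calL$ on $[0,\Tn^\sharp]$ via the embeddings \eqref{embeddings_y} (valid under \eqref{assumptions y}) and the bound just obtained: $\|\roreln(t)\|_{\Linf}+\|\nabla\roreln(t)\|_{\Lthree}\lesssim\|\roreln(t)\|_{\Hyplusonetwo}\lesssim\sqrt{\calE(t)}$; $\|\rorelnt\|_{L^2(0,t;\Lthree)}+\|(B/A)\cmean^2\rorelnt\|_{L^2(0,t;\Lsix)}\lesssim\bigl(1+\|B/A\|_{\Linf}\|\cmean^2\|_{\Linf}\bigr)\|\rorelnt\|_{L^2(0,t;\Hytwo)}\lesssim\sqrt{\calD(t)}$; while for $\nabla[\cmean^2\bfunc(\roreln)]$ and $\cmean^2\bfunc(\roreln)\nabla\ln\romean$ the product rule (with $\bfunc(\roreln)=1+\tfrac{B}{2A}\roreln$) splits off a part involving $\roreln$, bounded by $C\bigl(\|\cmean^2\|_{\Linf\cap\Wonethree},\|B/A\|_{\Linf\cap\Wonethree}\bigr)\sqrt{\calE(t)}$, from a remaining part controlled by the variations of $\cmean^2$ and $\romean$, hence $\lesssim\deltaromeancmean$. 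Collecting everything, $\calL(\roreln,\bfun)(t)\le C_{\star\star}\bigl(\sqrt{\delta}+\deltaromeancmean\bigr)$ on $[0,\Tn^\sharp]$ with $C_{\star\star}$ independent of $n$; choosing $\delta$ and $\deltaromeancmean$ small enough that the right-hand side is $\le r/2$, we get $\calL(\roreln,\bfun)(t)\le r/2<r$ there.

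It then remains to upgrade this to $\Tn^\sharp=\Tn$ and then to $\Tn=T$. If $\Tn^\sharp<\Tn$, continuity of $t\mapsto\calL(\roreln,\bfun)(t)$ together with the strict inequality $\calL(\roreln,\bfun)(\Tn^\sharp)<r$ would give $\calL(\roreln,\bfun)\le r$ on a slightly larger interval, contradicting the definition of $\Tn^\sharp$; hence $\Tn^\sharp=\Tn$, i.e.\ $\calL(\roreln,\bfun)(t)<r$ on all of $[0,\Tn]$. Since $\calL(\roreln,\bfun)<r$ in particular keeps $\roreln$ small (so $\afunc(\roreln)$ and $\bfunc(\roreln)$ stay non-degenerate, cf.\ Proposition~\ref{prop: Galerkin energy id}) and, via \eqref{uniform energy bound Galerkin}, keeps the Galerkin coefficients bounded, the fixed-point construction of Proposition~\ref{prop roreln pn} can be restarted at $t=\Tn$; as the $n$-uniform a priori bound persists and thereby rules out finite-time blow-up, finitely many such restarts exhaust $[0,T]$, so $\Tn=T$ may be chosen independently of $n$ and \eqref{uniform energy bound Galerkin} holds on $[0,T]$. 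I expect the crux to be precisely this self-improvement step: one must verify that the re-estimated $\calL$ drops \emph{strictly} below $r$ \emph{uniformly in} $n$, which forces every constant entering the estimate — the embeddings in \eqref{embeddings_y}, the \Gronwall\ factor $\exp(C_2T)$ in \eqref{uniform energy bound Galerkin}, and the $n$-uniform stability of the Ritz projection from Lemma~\ref{lemGprojg} — to be independent of the discretization parameter, and that the only genuinely non-small contributions, namely the $\roreln$-independent parts of $\nabla[\cmean^2\bfunc(\roreln)]$ and $\cmean^2\bfunc(\roreln)\nabla\ln\romean$, be absorbable into $\deltaromeancmean$.
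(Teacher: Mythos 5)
Your bootstrap/continuation argument is logically equivalent to the paper's proof by contradiction (the paper takes $t_*=\inf\{t:\calL(t)>r\}$, applies the energy bound of Proposition~\ref{prop: Galerkin energy estimate} and the embeddings \eqref{embeddings_y} to get $\calL^2(t_*)\leq CC_0\delta<r^2$, a contradiction), so the approach is essentially the same. Your version is slightly more careful in explicitly verifying $\calL(0)<r/2$ — a step the paper leaves implicit but needs for $\calL(t_*)=r$ to hold — and in flagging that the $\rorel$-independent parts of $\nabla[\cmean^2\bfunc(\roreln)]$ and $\cmean^2\bfunc(\roreln)\nabla\ln\romean$ must be absorbed by $\deltaromeancmean$ rather than by the energy.
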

\begin{proof}
	We argue by contradiction. Assume that there exists $t_0 \in [0, \Tn]$, such that
	\begin{equation}
		\calL(\roreln, \bfun)(t_0)>r.
	\end{equation}
	Let $t_*= \inf \{t: 	\calL(\roreln, \bfun)(t)>r\}$. By continuity of $\calL$, then
	\begin{equation}
		\calL(\roreln, \bfun)(t_*) = r.
	\end{equation} 
	However, since $\calL(\roreln, \bfun)(t_*) = r$, we know from the energy bound \eqref{uniform energy bound Galerkin} that
	\begin{equation} \label{est energy t star}
		\begin{aligned}
			\calE(t_*)+\calD(t_*) \leq C \delta.
		\end{aligned}
	\end{equation}
	Furthermore, by employing the Sobolev embeddings in \eqref{embeddings_y}, it follows that
	\begin{equation} \label{est L}
		\calL^2(\roreln, \bfun)(t_0) \leq C_0 (\calE(t_*)+\calD(t_*)),
	\end{equation}
	where (crucially) the constant $C_0$ does not depend on $\Tn$ or $n$. Combining estimates \eqref{est energy t star} and \eqref{est L} yields
	\begin{equation}
		\calL^2(\roreln, \bfun)(t_*) \leq C C_0 \delta.
	\end{equation}
	Choosing the size of data to be $\delta< \frac{r^2}{C C_0}$ leads to $ \calL(\roreln, \bfun)(t_*) < r$ and thus a contradiction. \\
	\indent By Proposition~\ref{prop: Galerkin energy estimate}, the uniform boundedness of $\calL$ in turn implies that the energy is uniformly bounded:
	\[
	\calE(t) \leq C, \quad  t \in [0, \Tn],
	\]
	and we can thus prolong Galerkin solutions until we reach the final time $T$.
\end{proof}		

\subsection[\texorpdfstring{Passing to the limit as $\boldsymbol{n \rightarrow \infty}$}{Passing to the limit as n -> inf}]{Passing to the limit as $\boldsymbol{n \rightarrow \infty}$}
Thanks to the established $n$-uniform bounds on Galerkin approximations on $[0,T]$, we may extract subsequences of $\{\bfun\}_{n\geq 1}$ and $\{\roreln\}_{n\geq 1}$, which we do not relabel, such that
\begin{equation} \label{weak limits bfu}
	\begin{alignedat}{4} 
		\bfun  &\relbar\joinrel\rightharpoonup \bfu &&\text{ weakly-$*$}  &&\text{ in } && \LinfTHdiv,  \\
		\divbfun  &\relbar\joinrel\rightharpoonup \divbfu  &&\text{ weakly-$*$}  &&\text{ in } && \LinfTLtwo,  \\
		\nabla (\divbfun)  &\relbar\joinrel\rightharpoonup \nabla (\divbfu)\quad &&\text{ weakly}  &&\text{ in } &&\LtwoTLtwo,  
	\end{alignedat} 
\end{equation}
and
\begin{equation} \label{weak limits roreln pn}
	\begin{alignedat}{4} 
		\roreln  &\relbar\joinrel\rightharpoonup \rorel &&\text{ weakly-$*$}  &&\text{ in } &&L^\infty(0,T; H^{\frac{y+1}{2}}(\Omega)),  \\
		\rorelnt  &\relbar\joinrel\rightharpoonup \rorelt &&\text{ weakly}  &&\text{ in } &&L^2(0,T; H^{\frac{y}{2}}(\Omega)).
	\end{alignedat} 
\end{equation}
By the compact embedding $\Xsigma \hookrightarrow \hookrightarrow C([0,T]; \Hone)$, we also know that there is a subsequence of $\{\roreln\}_{n \geq 1}$, not relabeled, such that
\begin{equation} \label{strong limits roreln}
	\begin{aligned} 
		\roreln  \rightarrow \rorel \quad \text{strongly in } \, C([0,T]; \Hone).
	\end{aligned} 
\end{equation}
Additionally, by the uniform boundedness of $\It \pn$, there is a subsequence of $\{\It \pn)\}_{n\geq 1}$, again not relabeled, such that
\begin{equation} \label{weak limits pn}
	\begin{aligned}
		\It \pn \relbar\joinrel\rightharpoonup \It p \quad \text{ weakly} \ \text{ in } \ \LtwoTHone.
	\end{aligned} 
\end{equation}
Thanks to \eqref{weak limits bfu} and \eqref{weak limits pn}, we can immediately pass to the limit as $n \rightarrow \infty$ in 
\begin{equation}
	\begin{aligned}
		\intTO \left(\romean(\bfun-\bfunzero)+\nabla\It \pn - \mu \nabla (\nabla \cdot \It \bfun) - \It \bff\right) \cdot \bfv \dxt =0, \quad \bfv \in \LtwoTLtwod
	\end{aligned}
\end{equation}
to conclude that 
\begin{equation}
	\begin{aligned}
		\intTO \left(\romean(\bfu-\bfuzero)+\nabla\It p - \mu \nabla (\nabla \cdot \It \bfu) - \It \bff\right) \cdot \bfv \dxt =0 \quad \text{for all }\ \bfv \in \LtwoTLtwod.
	\end{aligned}
\end{equation}
Next, to pass to the limit in $\maG$, we first note that for any $w \in \LtwoTLtwo$ we have 
\begin{equation} \label{weak conv nolinear terms}
	\begin{alignedat}{4} 
		\intTO	(\roreln \divbfun  - \rorel \divbfu) w \dxt = \intTO	(\roreln -\rorel) \divbfun  w \dxt+\intTO	(\divbfun  -  \divbfu) \rorel w \dxt.
	\end{alignedat} 
\end{equation}
The convergence of the second term to zero as $n \rightarrow \infty$ is immediate since $\rorel w \in \LtwoTLtwo$ for $w \in \LtwoTLtwo$. The convergence of the first term to zero follows by
\begin{equation}
	\begin{aligned}
		\intTO	(\roreln -\rorel) \divbfun  w \dxt \leq C\|\roreln-\rorel\|_{C([0,T]; \Hone)}\|\divbfun\|_{L^2(0,T; \Lfour)} \|w\|_{\LtwoTLtwo}
	\end{aligned}
\end{equation}
and \eqref{strong limits roreln}.
Next, we fix $N$ and choose 
\begin{equation} \label{functions v}
	v(t) = \displaystyle \sum_{i=1}^N \xisigma_i (t)w_i(x), \quad 	\phi(t) = \displaystyle \sum_{i=1}^N \xip_i (t)w_i(x)
\end{equation}
where $\{\xisigma\}_{i=1}^N$ and $\{\xip\}_{i=1}^N$ are given smooth functions. We choose $n \geq N$ and note that $\roreln$ satisfies
\begin{equation}
	\begin{aligned}
		\intTO \left( \rorelnt + \afunc(\roreln) \divbfun - \gfunc(\bfun)\right) v \dxt =0.
	\end{aligned}
\end{equation}
Thanks to the convergence of \eqref{weak conv nolinear terms} to zero as $n \rightarrow \infty$, we can then pass to the limit as $n \rightarrow \infty$ in the above equation and use the density of functions of the form \eqref{functions v} in $\LtwoTLtwo$ to conclude that
\begin{equation}
	\begin{aligned}
		\intTO \left( \rorelt + \afunc(\rorel) \divbfu - \gfunc(\bfu)\right) v \dxt =0 \quad \text{for any}\ v\in \LtwoTLtwo.
	\end{aligned}
\end{equation}
Similarly to the arguments in, e.g.,~\cite[Ch.\ 7]{evans_1998}, with $v(T)=0$, we have
\begin{equation}
	-	\intO \roreln(0) v(0) \dx-\intTO \rorel v_t \dxt+\intTO \left(\afunc(\roreln) \divbfun - \gfunc(\bfun)\right) v \dxt =0.
\end{equation}
By passing to the limit as $n \rightarrow \infty$ and using the analogous identity for $\rorel$, we can show that $\rorel(0)=\rorelzero$ since $\roreln(0) \rightarrow \rorelzero$ in $\Ltwo$. With similar reasoning to \eqref{weak conv nolinear terms}, 
\begin{equation} 
	\begin{aligned} 
		\intTO ( \It(\bfunc(\roreln)\roreln) -\It(\bfunc(\rorel)\rorel)) \phi\dxt = \intTO  \It(\roreln-\rorel) \phi \dxt+\frac{B}{2A}\intTO  \It((\roreln-\rorel)(\roreln+\rorel)) \phi \dxt \rightarrow 0
	\end{aligned} 
\end{equation}
as $n \rightarrow \infty$, thanks to \eqref{weak limits roreln pn} and \eqref{strong limits roreln}.
We can then pass to the limit also in 
\begin{equation}
	\begin{aligned}
	\begin{multlined}[t]	\intTO \left\{(\It \pn - \cmean^2 \romean \It (\bfunc(\roreln) \roreln)- \It \hfunc(\bfun)) \Deltaromean\phi 
		+ 2\alpha_0 \bigl(\tau(-\Delta)^{\frac{y}{4}} (\roreln-\rorelnzero) (-\Delta)^{\frac{y}{4}} \phi
		+\eta (-\Delta)^{\frac{y+1}{4}} \It \roreln (-\Delta)^{\frac{y+1}{4}} \phi \bigr) \right\}  \dx\\ =0, 
		\end{multlined}
	\end{aligned}
\end{equation}
using that
\begin{equation}
	\begin{aligned}
		((-\Delta)^{\frac{y}{4}} (\rorelzero -\rorelnzero), (-\Delta)^{\frac{y}{4}} \phi)_{\Ltwo}  \rightarrow 0 \quad \text{as } n \rightarrow \infty.
	\end{aligned}
\end{equation}
Altogether, we conclude that $(\bfu, \rorel, p)$ is a solution of the problem in the sense of Definition~\ref{def solution}.
~\\

By passing to the limit in the semi-discrete energy estimate \eqref{uniform energy bound Galerkin} and utilizing the lower semi-continuity of norms, we find that $(\bfu, \rorel, p)$ satisfies an  energy bound analogous to \eqref{uniform energy bound Galerkin} and arrive at the following existence result.

\begin{theorem}[Existence of solutions] \label{theorem wellposedness regularized problem} Let $T>0$. Let $\mu>0$ and $d-1 <y$, $2 \leq y \leq 3$ (cf. \eqref{assumptions y}) and assume that
	\begin{equation}
		\bfuzero \in \Hdiv, \quad \bfdzero \in \Linf \cap \Hone, \quad \rorelzero \in \Hyplusonetwo, \quad \bff \in \Xf,
	\end{equation}
	and	$B/A \in \XBonA$, $\romean \in \Xromean$, $\cmean^2 \in \Xcmean$, where the spaces $\Xf$, $\XBonA$, $\Xromean$, and $\Xcmean$ are defined in \eqref{Xf}, \eqref{X BA}, \eqref{Xromean}, and \eqref{Xcmean} respectively. There exist $\delta>0$ and $\deltaromeancmean>0$, such that if the smallness conditions \eqref{smallness condition romean cmean} and
	\eqref{smallness condition data} 
	hold, then there exists a solution $(\bfu, \rorel, p)$ of \eqref{regularized balance-state} in the sense of Definition~\ref{def solution}, which satisfies the following bound:
	\begin{equation}\label{nonuniform energy estimate}
	\begin{aligned}
		&\begin{multlined}[t]
			\|\bfu\|^2_{\LinfHdiv} 
			+ \|\nabla \rorel\|^2_{\LinfLtwo} + \|\rorel\|^2_{\LinfHyplusonetwo} 
			+ \intT  \left(\mu \|\nabla(\nabla\cdot\bfu(t))\|^2_{\Ltwo} + \|\rorelt(t)\|^2_{\Hytwo}+ \|\nabla \It p\|^2_{\Ltwo} \right)\dt
			\end{multlined}\\
		\leq&\, \begin{multlined}[t] C_1 \exp(C_2T) \Bigl(\|\nabla \cdot (\romean^{-1} \bff)\|^2_{\LtwoLtwo}+\|\It \bff\|^2_{\LtwoLtwo} 
			+ \|\bfuzero\|^2_{\Hdiv} 
			+ \|\cmean\sqrt{\rorelzero}\, \nabla \rorelzero\|^2_{\Ltwo}
			+\|\rorelzero \|^2_{\Hyplusonetwo}\\ +\| \nabla [\cmean^2\bfdzero \cdot \nabla \romean] \|^2_{\Ltwo}	\Bigr). \end{multlined}
	\end{aligned}
\end{equation}
\end{theorem}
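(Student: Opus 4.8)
The plan is to assemble the pieces established throughout Section~\ref{Sec:Analysis}. Under the stated hypotheses on $(\bfuzero, \bfdzero, \rorelzero)$, $\bff$, $B/A$, $\romean$, and $\cmean^2$, the assumptions of Lemma~\ref{lemma bfun} and Proposition~\ref{prop roreln pn} are satisfied, so for every $n \in \N$ the Galerkin system \eqref{Galerkin approximate system} admits a unique solution $(\bfun, \roreln, \pn)$ on an $n$-dependent interval $[0, \Tn]$. Next I would invoke the bootstrap/continuation argument established above: since the smallness conditions \eqref{smallness condition romean cmean} and \eqref{smallness condition data} hold, the a priori bound \eqref{uniform energy bound Galerkin} of Proposition~\ref{prop: Galerkin energy estimate} forces the uniform-smallness hypothesis \eqref{smallness condition acoef bcoef} to propagate in time, which lets us extend each Galerkin solution up to $\Tn = T$, with \eqref{uniform energy bound Galerkin} valid $n$-uniformly on $[0,T]$; crucially the constants $\delta$ and $\deltaromeancmean$ are $n$-independent.

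The second step is the passage to the limit $n \to \infty$. From the $n$-uniform energy bound I would extract subsequences (not relabeled) with the weak-$*$ and weak convergences \eqref{weak limits bfu}, \eqref{weak limits roreln pn}, and \eqref{weak limits pn}, and then use the compact embedding $\Xsigma \hookrightarrow\hookrightarrow C([0,T]; \Hone)$ to upgrade to the strong convergence $\roreln \to \rorel$ in $C([0,T]; \Hone)$ of \eqref{strong limits roreln}. With these in hand I would pass to the limit in each of the three components of the weak formulation of Definition~\ref{def solution}: the time-integrated momentum equation is linear in the unknowns and passes immediately; in the mass equation the single nonlinear term is handled through the splitting $\roreln\divbfun - \rorel\divbfu = (\roreln - \rorel)\divbfun + (\divbfun - \divbfu)\rorel$ together with the strong $C([0,T];\Hone)$ convergence and $\divbfun \rightharpoonup \divbfu$; the quadratic term $\bfunc(\roreln)\roreln$ in the pressure--density relation is treated the same way, while the fractional-damping terms are linear. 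Carrying this out first against fixed finite-dimensional test functions and then invoking density of the Galerkin basis in the relevant test spaces yields $(\bfu, \rorel, p) \in \solspacereg$ solving Definition~\ref{def solution}, and the initial condition $\rorel(0) = \rorelzero$ follows from $\roreln(0) \to \rorelzero$ in $\Ltwo$ via the usual argument with test functions vanishing at $t = T$.

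Finally, the energy estimate \eqref{nonuniform energy estimate} is obtained by passing to the limit in \eqref{uniform energy bound Galerkin} with $\Tn = T$ and using weak and weak-$*$ lower semicontinuity of the norms appearing on its left-hand side.

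I expect essentially all the difficulty to reside in the preceding propositions rather than in this final assembly. The genuinely delicate points are obtaining the $n$-uniform estimate of Proposition~\ref{prop: Galerkin energy estimate} --- which requires the nondegeneracy of $\afunc(\roreln)$ and $\bfunc(\roreln)$ and the careful absorption of the $\nabla\rorelt$ and $\gfunc(\bfun)$ contributions into the $y$-power dissipation, forcing $2 \le y \le 3$ together with $y > d-1$ --- and closing the continuation argument with $n$-independent smallness constants. Within the proof of the theorem itself, the one point needing care is securing the strong $C([0,T]; \Hone)$ convergence of $\roreln$, which rests on the embedding $\Hyplusonetwo \hookrightarrow \Hone$ valid under \eqref{assumptions y}, so that every nonlinear term converges to its expected limit.
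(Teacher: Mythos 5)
Your proposal is correct and follows the paper's approach exactly: you assemble the local Galerkin existence from Lemma~\ref{lemma bfun} and Proposition~\ref{prop roreln pn}, extend to $[0,T]$ via the bootstrap argument under the smallness conditions, extract the weak/weak-$*$ limits together with the strong convergence $\roreln \to \rorel$ in $C([0,T];\Hone)$, pass to the limit term by term in Definition~\ref{def solution} using the splitting of nonlinear products, and conclude the bound \eqref{nonuniform energy estimate} by weak lower semicontinuity applied to \eqref{uniform energy bound Galerkin}. You also correctly identify the delicate points (nondegeneracy of $\afunc$, $\bfunc$, the role of the $y$-power dissipation, and $n$-independence of the smallness constants) as residing in the preceding propositions rather than in this final assembly.
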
	
The estimate in \eqref{nonuniform energy estimate} is not uniform in $\mu$; that is, we cannot use this result to investigate the limit of solutions as $\mu \searrow 0$. As the setting $\mu=0$ is of interest for working with \eqref{original system}, we investigate it next by modifying the assumptions on $y$ as well as the functions $\gfunc$ and $\hfunc$.
%%%%%%%%%%%%%%%%%%%%%%%%%%%%%%%%%%%%%%%%%%
		\section{The vanishing viscosity limit under stronger assumptions} \label{sec vanishing viscosity}
	
	In this section, we discuss the vanishing $\mu$ limit of solutions to the problem with $\gfunc=\hfunc \equiv 0$:
		\begin{equation} \label{weak form gcoef hcoef zero}
		\begin{aligned}
			\begin{multlined}[t]		\intTO \Bigl\{\left(\romean(\bfu-\bfuzero)+\nabla\It p - \mu \nabla (\divbfu) - \It \bff\right) \cdot \bfv 
				+\left( \rorelt + \afunc(\rorel) \divbfu \right) v 
				\\+	\left(\It p - \cmean^2 \romean \It (\bfunc(\rorel) \rorel) \right) \Deltaromean\phi+2\alpha_0 \bigl(\tau(-\Delta)^{\frac{y}{4}} (\rorel-\rorelzero) (-\Delta)^{\frac{y}{4}} \phi
				+\eta (-\Delta)^{\frac{y+1}{4}} \It \rorel (-\Delta)^{\frac{y+1}{4}} \phi \bigr) \Bigr\}  \dxt =0 \end{multlined}
		\end{aligned}
	\end{equation}
which holds	for all $\bfv \in \LtwoTLtwod$, $v \in \LtwoTLtwo$, and $\phi \in L^2(0,T; \Hyplusonetwo)$, such that $\nabla \phi \cdot \nu =0$. \\
\indent Looking at the energy estimates in the previous section starting from the identity in \eqref{energy identity}, we see that we can simplify them because now $\rhstwo \equiv 0$. 	Since $\gfunc=\hfunc=0$, we can assume slightly less regularity of the coefficients as compared to \eqref{Xromean}, \eqref{Xcmean}, namely that 
\begin{equation} \label{weaker assumptions romean cmean}
	\romean,\romeaninv,\cmean, \frac{1}{\cmean}, B/A \in \Linf,\quad \romean\in\Htwo,\quad \cmean^2, B/A\in W^{1,3}(\Omega).
\end{equation} 
Furthermore, there is no need for the initial condition on $\bfd$. By then re-examining the derivation of the estimate of the time-integrated $\rhsone$, we observe that the culprit in \eqref{rhsone} for the non-uniform bounds in $\mu$ was the term
\begin{equation} \label{est rorelt}
	\intt \rorelnt |\divbfun| \ds \leq \intt \|\rorelnt\|_{\Lsix}\|\divbfun\|_{\Ltwo}\|\divbfun\|_{\Lthree}\ds,
\end{equation}
in particular, the need to further bound $\|\divbfun\|_{\Lthree}$; see \eqref{rhsone I}. We can obtain a $\mu$-uniform energy estimate if we have the following bound:
	\begin{equation} \label{embed_Linf_Hytwo}
		\| \rorelnt \|_{\Linf} \leq C \|\rorelnt \|_{\Hytwo} .
	\end{equation}
	because we can then replace estimate \eqref{est rorelt} by
	\begin{equation}
		\intt \rorelnt |\divbfun| \ds \leq \intt\|\rorelnt\|_{\Linf}\|\divbfun\|^2_{\Ltwo}\ds.
		\end{equation}
	For this reason, here we strengthen the lower bound on $y$ to $y>d$, so that embedding estimate \eqref{embed_Linf_Hytwo} holds. (Note that since $\gfunc=\hfunc=0$, we do not need the condition $y \leq 3$ any longer). By otherwise proceeding as in the previous section via the Faedo--Galerkin procedure, we arrive at the following uniform in $\mu$ result.
	\begin{proposition} \label{proposition uniform wellposedness  mu} Let $T>0$. Let $\mu>0$ and $y>d$ and assume that
		\begin{equation}
			\bfuzero \in \Hdiv, \quad \rorelzero \in \Hyplusonetwo, \quad \bff \in \Xf,
		\end{equation}
		and that \eqref{weaker assumptions romean cmean} holds. There exists $\delta>0$, such that if 
		\begin{equation}
			\|\rorelzero\|^2_{\Hyplusonetwo} +\|\bfuzero\|^2_{\Hdiv} + \|\bff\|^2_{\Xf} \leq \delta,
		\end{equation}
%		and
%		\begin{equation}
%			\begin{aligned}
%				\begin{multlined}[t]
%					\|\nabla [\cmean^2 \nabla \romean] \|_{\Ltwo}+\|\nabla \ln \romean\|_{\Hone}< \delta_{\romean, \cmean},
%				\end{multlined}
%			\end{aligned}
%		\end{equation}
		then there exists a solution $(\bfu, \rorel, p)$ of \eqref{weak form gcoef hcoef zero}, which satisfies the following bound:
		\begin{equation}\label{uniform energy bound}
			\begin{aligned}
				&\begin{multlined}[t]
					\|\bfu\|^2_{\LinfHdiv} 
					+ \|\nabla \rorel\|^2_{\LinfLtwo} + \|\rorel\|^2_{\LinfHyplusonetwo} 
					+ \intT  \left(\mu \|\nabla(\nabla\cdot\bfu(t))\|^2_{\Ltwo} + \|\rorelt(t)\|^2_{\Hytwo}+ \|\nabla \It p\|^2_{\Ltwo} \right)\dt
				\end{multlined}\\
				\leq&\, \begin{multlined}[t] C_1 \exp(C_2T) \Bigl(\|\nabla \cdot (\romean^{-1} \bff)\|^2_{\LtwoLtwo}+\|\It \bff\|^2_{\LtwoLtwo} 
					+ \|\bfuzero\|^2_{\Hdiv} 
					+ \|\cmean\sqrt{\rorelzero}\, \nabla \rorelzero\|^2_{\Ltwo}
					+\|\rorelzero \|^2_{\Hyplusonetwo}	\Bigr),\end{multlined}
			\end{aligned}
		\end{equation}
		where the constants $C_1$ and $C_2$ do not depend on $\mu$.
	\end{proposition}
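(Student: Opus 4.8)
The plan is to re-run the Faedo--Galerkin construction of Section~\ref{Sec:Analysis} for system \eqref{weak form gcoef hcoef zero}, exploiting the simplifications brought by $\gfunc=\hfunc\equiv0$ together with the stronger condition $y>d$, so that every estimate closes with constants that are independent of $\mu$. For the local step I would keep the same basis $\{w_i\}_{i\ge1}$ of eigenfunctions of $-\Deltaromean$, define $\Wn$ and seek $(\roreln,\pn)$ in the form \eqref{def Galerkin approximations}, with $\bfun=S(\pn)$ supplied by Lemma~\ref{lemma bfun} (whose hypotheses $\romean,\romeaninv\in\Linf$, $\bff\in\LtwoTLtwod$ are covered by \eqref{weaker assumptions romean cmean}). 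Since $\gfunc=\hfunc=0$, the fixed-point map in Proposition~\ref{prop roreln pn} loses its $\gfunc$- and $\hfunc$-terms, so its Banach contraction argument carries over (and in fact simplifies), producing a unique Galerkin triple $(\bfun,\roreln,\pn)$ on some interval $[0,\Tn]$.

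The core of the proof is the $\mu$-uniform energy estimate. I would derive the energy identity exactly as in Proposition~\ref{prop: Galerkin energy id}; with $\gfunc=\hfunc=0$ the term $\rhstwo$ vanishes identically and the $\hfunc$-contributions to $\rhsone$ in \eqref{rhsone} disappear. The only source of $\mu$-nonuniformity in Section~\ref{Sec:Analysis} was the term $\intt\intO\rorelnt|\divbfun|^2$: its treatment in \eqref{rhsone I}, \eqref{est rorelt} required the factor $\|\divbfun\|_{\Lthree}$ (or $\|\divbfun\|_{\Lsix}$), hence an $\eps\|\nabla(\divbfun)\|^2_{\LtwotLtwo}$ term that could be absorbed only for $\eps\lesssim\mu$. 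Strengthening the lower bound to $y>d$ gives the embedding \eqref{embed_Linf_Hytwo}, $\|\rorelnt\|_{\Linf}\le C\|(-\Delta_N)^{\frac{y}{4}}\rorelnt\|_{\Ltwo}$, so one can instead bound
\[
\intt\intO\rorelnt\,|\divbfun|^2\dx\ds
\le\intt\|\rorelnt\|_{\Linf}\|\divbfun\|_{\Ltwo}^2\ds
\le\eps\intt\|(-\Delta_N)^{\frac{y}{4}}\rorelnt\|_{\Ltwo}^2\ds+\frac{C}{4\eps}\intt\|\divbfun(s)\|_{\Ltwo}^4\ds,
\]
absorbing the first term into the dissipation $2\alpha_0\tau\|(-\Delta_N)^{\frac{y}{4}}\rorelnt\|^2_{\LtwotLtwo}$ on the left of \eqref{energy identity} with a $\mu$-free constant, and treating the second, $\le\frac{C}{4\eps}\bigl(\esssup_{[0,t]}\|\divbfun\|^2_{\Ltwo}\bigr)\intt\|\divbfun(s)\|^2_{\Ltwo}\ds$, by \Gronwall. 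The remaining viscous right-hand side term $\mu\intO\romeaninv\nabla(\divbfun)\cdot(\afunc'(\roreln)\nabla\roreln)\divbfun$ in \eqref{rhsone} carries an explicit prefactor $\mu$; estimating $\|\divbfun\|_{\Lsix}\lesssim\|\divbfun\|_{\Ltwo}+\|\nabla(\divbfun)\|_{\Ltwo}$ via \eqref{embeddings_y} and using Young's inequality leaves only $\mu r\|\nabla(\divbfun)\|^2_{\LtwotLtwo}$ and $\mu(\cdots)\|\divbfun\|^2_{\LtwotLtwo}$, which for small $r$ are absorbed into $\mu\|\sqrt{\afunc(\roreln)/\romean}\,\nabla(\divbfun)\|^2_{\LtwotLtwo}$ uniformly in $\mu$. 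The remaining $\rhsone$ terms --- the $\bff$ term and the two involving $\bfunc'(\roreln)\rorelnt$ and $\roreln(\nabla[\cmean^2\bfunc(\roreln)]+\cmean^2\bfunc(\roreln)\nabla\lnromean)\cdot\nabla\rorelnt$ --- I would estimate verbatim as in Proposition~\ref{prop: Galerkin energy estimate}, using only boundedness of $\romean,\romeaninv,\cmean,1/\cmean,B/A$, $\romean\in\Htwo$ and $\cmean^2,B/A\in W^{1,3}(\Omega)$ from \eqref{weaker assumptions romean cmean} together with smallness of the scalar $r$ controlling $\roreln$ and its derivatives. Adding the $\|\nabla\It\pn\|_{\LtwotLtwo}$ bound obtained by testing $\moG$ with $\nabla\It\pn$ (cf.\ \eqref{est_Itp}) and invoking \Gronwall yields \eqref{uniform energy bound} on $[0,\Tn]$ with $C_1,C_2$ independent of both $n$ and $\mu$.

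Because these bounds are uniform, the extension to $[0,T]$ proceeds by the same bootstrap/contradiction argument as in Section~\ref{Sec:Analysis}: if the functional $\calL$ controlling $\|\roreln\|_{\Linf}$, $\|\rorelnt\|_{\LtwotLthree}$, $\|\nabla\roreln\|_{\Lthree}$ etc.\ first hit the threshold $r$ at some $t_*\le\Tn$, the small-data hypothesis would force $\calL(t_*)<r$, a contradiction; here only smallness of $\|\rorelzero\|^2_{\Hyplusonetwo}+\|\bfuzero\|^2_{\Hdiv}+\|\bff\|^2_{\Xf}$ is needed, since the terms that previously required $\deltaromeancmean$ all came from $\rhstwo$ and the $\hfunc$-part of $\rhsone$, both absent now. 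Hence $\Tn=T$ for every $n$, and the uniform bounds give, along a subsequence, the weak-$*$ limits \eqref{weak limits bfu}--\eqref{weak limits pn} together with $\roreln\to\rorel$ strongly in $C([0,T];\Hone)$ via $\Xsigma\hookrightarrow\hookrightarrow C([0,T];\Hone)$ (cf.\ \eqref{strong limits roreln}). Passing to the limit in the Galerkin equations exactly as in Section~\ref{Sec:Analysis} (the products $\roreln\divbfun$ and $\bfunc(\roreln)\roreln$ are controlled through this strong convergence) shows $(\bfu,\rorel,p)$ solves \eqref{weak form gcoef hcoef zero}, and lower semicontinuity of the norms transfers \eqref{uniform energy bound} to the limit.

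The main obstacle is precisely the audit in the second step: one has to revisit \emph{every} term that Section~\ref{Sec:Analysis} handled through $\|\divbfun\|_{\Lthree}$ or $\|\divbfun\|_{\Lsix}$ bounded by $\|\nabla(\divbfun)\|_{\Ltwo}$ and verify that, under $y>d$, each can be rerouted so that $\mu$ enters only as a prefactor on a quantity absorbable by the left-hand-side viscous dissipation --- so that no residual $\mu$-dependence survives in the constants --- while simultaneously checking that the weaker coefficient regularity \eqref{weaker assumptions romean cmean} still suffices for the terms that remain.
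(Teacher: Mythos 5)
Your proposal is correct and follows essentially the same route as the paper: observe that $\gfunc=\hfunc\equiv0$ makes $\rhstwo\equiv 0$ and the $\hfunc$-parts of $\rhsone$ disappear, strengthen the lower bound on $y$ to $y>d$ so that $H^{y/2}(\Omega)\hookrightarrow\Linf$ gives \eqref{embed_Linf_Hytwo}, and use this to replace the $\|\divbfun\|_{\Lthree}$/$\|\divbfun\|_{\Lsix}$ factors (which had forced $\eps\lesssim\mu$) by an $\|\rorelnt\|_{\Linf}\|\divbfun\|^2_{\Ltwo}$ bound, after which the remaining $\mu$-weighted terms carry explicit prefactors absorbable by the viscous dissipation and the rest of the Galerkin/bootstrap/limit machinery of Section~\ref{Sec:Analysis} goes through unchanged with $\mu$-free constants. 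The only cosmetic difference is in the handling of the replacement term: you apply Young against the dissipation to produce $\eps\|(-\Delta_N)^{y/4}\rorelnt\|^2_{\LtwotLtwo}+\frac{C}{4\eps}\intt\|\divbfun\|^4_{\Ltwo}$ and then control the quartic factor via the bootstrap smallness of $\esssup\|\divbfun\|^2_{\Ltwo}$, whereas the paper's scheme instead includes (the analogue of) $\|\rorelnt\|_{L^2_t(\Linf)}$ directly in the bootstrap functional $\calL$ so the product factors linearly as $r\|\divbfun\|_{\LinftLtwo}\|\divbfun\|_{\LtwotLtwo}$ and Gr\"onwall applies in the usual linear form; both arrangements close under the same small-data hypothesis.
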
	
	As \eqref{uniform energy bound} provides a $\mu$-uniform bound on $(\bfu, \rorel, \It p)$, similarly to Section~\ref{Sec:Analysis},  we can find subsequences of $\{\bfu\}_{\mu>0}$ and $\{\rorel\}_{\mu>0}$, which we do not relabel, such that
	\begin{equation} \label{weak limits bfu mu}
		\begin{alignedat}{4} 
			\bfu  &\relbar\joinrel\rightharpoonup \bfumuzero \quad&&\text{ weakly-$*$}  &&\text{ in } && \LinfTHdiv,  \\
			\divbfu  &\relbar\joinrel\rightharpoonup \divbfumuzero  \quad&&\text{ weakly-$*$}  &&\text{ in } && \LinfTLtwo,  
		\end{alignedat} 
	\end{equation}
	and
	\begin{equation} \label{weak limits roreln pn mu}
		\begin{alignedat}{4} 
			\rorel  &\relbar\joinrel\rightharpoonup \rorelmuzero \quad &&\text{ weakly-$*$}  &&\text{ in } &&L^\infty(0,T; H^{\frac{y+1}{2}}(\Omega)),  \\
			\rorelt  &\relbar\joinrel\rightharpoonup \rorelmuzero_t \quad &&\text{ weakly}  &&\text{ in } &&L^2(0,T; H^{\frac{y}{2}}(\Omega))
		\end{alignedat} 
	\end{equation}
as $\mu \searrow 0$.	
	Additionally, 
	\begin{equation} \label{weak limits pn mu}
		\begin{aligned}
			\It p \relbar\joinrel\rightharpoonup \It \pmuzero \quad \text{ weakly} \ \text{ in } \ \LtwoTHone \ \text{as } \mu \searrow 0.
		\end{aligned} 
	\end{equation}
	Furthermore, from \eqref{uniform energy bound} we have the uniform bound
	\begin{equation}
		\sqrt{\mu} \|\nabla (\divbfu)\|_{\LtwoLtwo} \leq C
	\end{equation}
	and thus know that
	\begin{equation}
		\intTO \mu \nabla (\divbfu) \cdot \bfv \dxt \rightarrow 0 \quad \text{ as } \mu \searrow 0.
	\end{equation}
	These convergence results allow us to pass to the limit in \eqref{weak form gcoef hcoef zero} and prove existence of solutions for the problem without viscosity. To give the statement, we introduce the space
	\begin{equation} \label{def Xu}
		\Xu = \left\{\bfu\in \LinfTHdiv:  \ \bfu \cdot \nu=0 \text{ on }\partial \Omega \right\}.
	\end{equation}
Recall that the spaces $\Xrho$ and $\XItp$ are defined in \eqref{def Xromean} and \eqref{def XItp}, respectively.	We can thus state the second main result of this work.
		\begin{theorem}[Existence of solutions when $\mu=0$] \label{theorem wellposedness  mu ero} Under the assumptions of Proposition~\ref{proposition uniform wellposedness  mu}, there exists $(\bfumuzero, \rorelmuzero, \pmuzero) \in \calX= \Xu \times \Xsigma \times \XItp$ that satisfies
					\begin{equation} \label{weak form mu zero}
				\begin{aligned}
					\begin{multlined}[t]		\intTO \Bigl\{\left(\romean(\bfumuzero-\bfuzero)+\nabla\It \pmuzero - \It \bff\right) \cdot \bfv 
						+\left( \rorelmuzero_t + \afunc(\rorelmuzero) \divbfumuzero \right) w 
						\\+	\left(\It \pmuzero - \cmean^2 \romean \It (\bfunc(\rorelmuzero) \rorelmuzero) \right) \Deltaromean\phi+2\alpha_0 \bigl(\tau(-\Delta)^{\frac{y}{4}} (\rorelmuzero-\rorelzero) (-\Delta)^{\frac{y}{4}} \phi
						+\eta (-\Delta)^{\frac{y+1}{4}} \It \rorelmuzero (-\Delta)^{\frac{y+1}{4}} \phi \bigr) \Bigr\}  \dxt\\ =0 \end{multlined}
				\end{aligned}
			\end{equation}
			for all $\bfv \in \LtwoTLtwod$, $w \in \LtwoTLtwo$, and $\phi \in L^1(0,T; \Hyplusonetwo)$, such that $\nabla \phi \cdot \nu =0$ with $\rorelmuzero\vert_{t=0}=\rorelzero$. Furthermore, the following bound holds:
		\begin{equation}\label{uniform energy bound}
			\begin{aligned}
				&\begin{multlined}[t]
					\|\bfumuzero\|^2_{\LinfHdiv} 
					+ \|\nabla \rorelmuzero\|^2_{\LinfLtwo} + \|\rorelmuzero\|^2_{\LinfHyplusonetwo} 
					+ \intT  \left(\|\rorelmuzero_t(t)\|^2_{\Hytwo}+ \|\nabla \It \pmuzero\|^2_{\Ltwo} \right)\dt
				\end{multlined}\\
				\leq&\, \begin{multlined}[t] C_1 \exp(C_2T) \Bigl(\|\nabla \cdot (\romean^{-1} \bff)\|^2_{\LtwoLtwo}+\|\It \bff\|^2_{\LtwoLtwo} 
					+ \|\bfuzero\|^2_{\Hdiv} 
					+ \|\cmean\sqrt{\rorelzero}\, \nabla \rorelzero\|^2_{\Ltwo}
					+\|\rorelzero \|^2_{\Hyplusonetwo}	\Bigr),\end{multlined}
			\end{aligned}
		\end{equation}
		where the constants $C_1$ and $C_2$ do not depend on $\mu$.
	\end{theorem}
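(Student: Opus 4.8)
The plan is to take the vanishing-viscosity limit $\mu\searrow0$ along the family of solutions furnished by Proposition~\ref{proposition uniform wellposedness  mu}, mirroring the passage $n\to\infty$ in Section~\ref{Sec:Analysis}. The $\mu$-uniform estimate \eqref{uniform energy bound} (for $\mu>0$) already supplies the weak-$*$ and weak convergences recorded in \eqref{weak limits bfu mu}--\eqref{weak limits pn mu}. To those I would add, exactly as in the derivation of \eqref{strong limits roreln}, a further (non-relabeled) subsequence along which $\rorel\to\rorelmuzero$ strongly in $C([0,T];\Hone)$, obtained from the boundedness of $\{\rorel\}_{\mu>0}$ in $L^\infty(0,T;\Hyplusonetwo)$ and of $\{\rorelt\}_{\mu>0}$ in $L^2(0,T;\Hyhalf)$ together with the compact embedding $\Xsigma\hookrightarrow\hookrightarrow C([0,T];\Hone)$. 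Crucially, since now $y>d$, the chain $\Hyplusonetwo\hookrightarrow\hookrightarrow\Hyhalf\hookrightarrow\Linf$ upgrades this to $\rorel\to\rorelmuzero$ strongly in $C([0,T];\Linf)$, and $\rorelmuzero\in\LinfLinf$. The remaining task is to pass to the limit in every term of \eqref{weak form gcoef hcoef zero}.

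The affine terms pass directly. In the momentum balance, $\romean(\bfu-\bfuzero)$, $\nabla\It p$ and $\It\bff$ converge by \eqref{weak limits bfu mu} and \eqref{weak limits pn mu}, while the viscous term drops because $\|\mu\nabla(\divbfu)\|_{\LtwoLtwo}=\sqrt\mu\,\|\sqrt\mu\,\nabla(\divbfu)\|_{\LtwoLtwo}\le C\sqrt\mu\to0$; in $\maG$ the term $\rorelt$ converges weakly; in $\pdG$, $\It p$ converges weakly and the fractional-Laplacian contributions converge by \eqref{weak limits roreln pn mu} (the datum $\rorelzero$ there being $\mu$-independent). The nonlinear terms are the heart of the matter. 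For $\afunc(\rorel)\divbfu=\divbfu+2\rorel\divbfu$, the affine part passes by \eqref{weak limits bfu mu}; for the quadratic part, testing against $v\in\LtwoTLtwo$ and writing $\rorel\divbfu-\rorelmuzero\divbfumuzero=(\rorel-\rorelmuzero)\divbfu+\rorelmuzero(\divbfu-\divbfumuzero)$, the first contribution is controlled by $\|\rorel-\rorelmuzero\|_{C([0,T];\Linf)}\|\divbfu\|_{\LtwoLtwo}\|v\|_{\LtwoLtwo}\to0$ (strong convergence and the $\mu$-uniform bound $\|\divbfu\|_{\LinfLtwo}\le C$), and the second converges since $\divbfu\rightharpoonup\divbfumuzero$ weakly-$*$ in $\LinfLtwo$ while $\rorelmuzero v\in\LoneLtwo$ because $\rorelmuzero\in\LinfLinf$. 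For $\bfunc(\rorel)\rorel=\rorel+\tfrac{B}{2A}\rorel^2$, the strong convergence in $C([0,T];\Linf)$ gives $\rorel^2\to(\rorelmuzero)^2$ in $C([0,T];\Ltwo)$, hence $\It(\bfunc(\rorel)\rorel)\to\It(\bfunc(\rorelmuzero)\rorelmuzero)$ in $C([0,T];\Ltwo)$, so that $\cmean^2\romean\It(\bfunc(\rorel)\rorel)$ converges against $\Deltaromean\phi$ (integrating by parts onto $\tfrac1{\romean}\nabla\phi$ if convenient, using \eqref{weaker assumptions romean cmean}).

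Collecting these limits shows that $(\bfumuzero,\rorelmuzero,\pmuzero)$ satisfies \eqref{weak form mu zero}: I would first obtain the identity for $\phi$ a finite combination of Neumann-Laplacian eigenfunctions with smooth time coefficients, and then extend it to all $\phi\in L^1(0,T;\Hyplusonetwo)$ with $\nabla\phi\cdot\nu=0$ by density, the weaker $L^1$-in-time test class being admissible thanks to the algebraic structure of the pressure relation, which makes $\It\pmuzero-\cmean^2\romean\It(\bfunc(\rorelmuzero)\rorelmuzero)$ equal to $(-\Deltaromean)^{-1}$ applied to $\tau(-\Delta)^{y/2}(\rorelmuzero-\rorelzero)+\eta(-\Delta)^{(y+1)/2}\It\rorelmuzero$. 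The initial condition is then immediate: $\rorel\vert_{t=0}=\rorelzero$ for every $\mu>0$ and $\rorel\to\rorelmuzero$ in $C([0,T];\Hone)$ force $\rorelmuzero\vert_{t=0}=\rorelzero$. Membership $(\bfumuzero,\rorelmuzero,\pmuzero)\in\calX=\Xu\times\Xsigma\times\XItp$ follows because the defining constraints (vanishing normal traces of $\bfumuzero$ and $\nabla\rorelmuzero$, zero mean and Neumann condition on $\pmuzero$) are closed-subspace conditions, hence stable under weak limits; and the stated bound follows by passing to the limit in the $\mu>0$ version of \eqref{uniform energy bound}, discarding the nonnegative term $\mu\|\nabla(\divbfu)\|^2_{\LtwoLtwo}$ and invoking weak and weak-$*$ lower semicontinuity of the norms. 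I expect the main obstacle to be precisely the nonlinear terms $\afunc(\rorel)\divbfu$ and $\cmean^2\romean\It(\bfunc(\rorel)\rorel)$: since only $\sqrt\mu\,\nabla(\divbfu)$, and not $\nabla(\divbfu)$ itself, is bounded uniformly in $\mu$, the $n\to\infty$ argument based on $\divbfu\in\LtwoLfour$ is no longer available, and one must rely on the stronger convergence $\rorel\to\rorelmuzero$ in $C([0,T];\Linf)$ made available by the reinforced hypothesis $y>d$; a secondary technical point is checking that all products paired with $\Deltaromean\phi$ stay well defined under the weakened data hypotheses \eqref{weaker assumptions romean cmean}.
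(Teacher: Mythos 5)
Your proposal is essentially the paper's approach (pass to the limit $\mu\searrow 0$ along the convergences recorded after Proposition~\ref{proposition uniform wellposedness  mu}, drop the $O(\sqrt\mu)$ viscous term, and use lower semicontinuity for the energy bound), but you have correctly identified and carefully filled in the one step the paper leaves essentially unexplained: the limit in the nonlinear terms. Your diagnosis is exactly right — the $n\to\infty$ argument used the bound
\[
\|\divbfun\|_{\LtwoLfour}\lesssim\|\divbfun\|_{\LtwoLtwo}+\|\nabla\divbfun\|_{\LtwoLtwo},
\]
which is available for fixed $\mu>0$ but only yields $\|\nabla(\divbfu)\|_{\LtwoLtwo}\le C/\sqrt\mu$ uniformly, so that route degenerates as $\mu\searrow 0$. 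Your replacement, namely upgrading the strong convergence of $\rorel$ to $C([0,T];\Linf)$ using the strengthened hypothesis $y>d$ (so that $\Hyplusonetwo\hookrightarrow\hookrightarrow\Linf$ and the Aubin–Lions–Simon argument applies with target $\Linf$), and then splitting $\rorel\divbfu-\rorelmuzero\divbfumuzero$ so that each piece is controlled by either $C([0,T];\Linf)$ strong convergence paired with the $\LinfLtwo$ bound, or by weak-$*$ convergence of $\divbfu$ paired with $\rorelmuzero v\in\LoneLtwo$, is correct and is the natural resolution. The same mechanism handles $\rorel^2$ in the pressure relation. The remarks on the closedness of the trace/mean constraints, on the initial condition, and on obtaining the bound by lower semicontinuity are all standard and correct. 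One minor point: for the strong convergence to $C([0,T];\Linf)$ one only needs $\Hyplusonetwo\hookrightarrow\hookrightarrow\Linf$, which already holds for $y>d-1$; the strict requirement $y>d$ originates from the $\mu$-uniform energy estimate (the embedding $\Hytwo\hookrightarrow\Linf$ applied to $\rorelnt$), not from the compactness step you invoke, but since $y>d$ is a standing hypothesis here this is inconsequential.
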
	
	With this result, we have established sufficient conditions for the existence of solutions to \eqref{original system} with the modified absorption operator \eqref{L_almostoriginal}, where the problem is understood in the sense of \eqref{weak form mu zero}. 	As previously mentioned, the theory can also be adapted to allow for having the original absorption operator \eqref{L_JRT16},
	however at the cost of higher smoothness of the coefficients $\romean$ and $\cmean$.

%%%%%%%%%%%%%%%%%%%
\section*{Acknowledgements}
The contribution of B.K.\ to this work was supported by the Austrian Science Fund (FWF) [10.55776/P36318]. B.C.\ acknowledges the support of the Engineering and Physical Sciences Research Council, UK [EP/W029324/1, EP/T014369/1].

%%%%%%%%%%%%%%%%%%%
\section*{Appendix} \label{appendix}

We here provide the proof of Lemma~\ref{lemGprojg}, which is partly based on the following consequence of the Courant--Fischer $\max-\min$ formula of eigenvalues of compact selfadjoint operators, adapted from~\cite{aaoBayes}.
\begin{lemma}\label{lem:eigvaltransf}
Let $V$ and $H$ be Hilbert spaces with $C:V \to V$ self-adjoint and compact and $\mathcal{M}:V \to H$ boundedly invertible with $\mathcal{M}\in L(V,H)$ and $\mathcal{M}^{-1} \in L(H,V)$. Then the operator $\tilde{C}:= (\mathcal{M}^{-1})^*C \mathcal{M}^{-1} : H \to H$ is self-adjoint and compact and the eigenvalues $\lambda_k$ of $C$ and $\mu_k$ of $\tilde{C}$ decay at the same rate; more precisely, it holds
\begin{equation*}
\frac{1}{\|\mathcal{M}^{-1}\|^2}\mu_k \leq \lambda_k \leq \|\mathcal{M}\|^2 \mu_k,
\end{equation*}
with $\lambda_1 \geq \lambda_2 \geq \cdots \geq 0$ and $\mu_1 \geq \mu_2 \geq \cdots \geq 0$.
\end{lemma}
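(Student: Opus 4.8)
The plan is to derive both inequalities from the Courant--Fischer $\max$--$\min$ characterisation of the ordered eigenvalues of a compact self-adjoint operator, combined with the two-sided comparison of the norms on $V$ induced by $\mathcal{M}$.

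First I would dispose of the structural claims. Self-adjointness of $\tilde{C}$ is immediate, since $\tilde{C}^{*}=\bigl((\mathcal{M}^{-1})^{*}C\mathcal{M}^{-1}\bigr)^{*}=(\mathcal{M}^{-1})^{*}C^{*}\mathcal{M}^{-1}=\tilde{C}$ by $C^{*}=C$ and $\bigl((\mathcal{M}^{-1})^{*}\bigr)^{*}=\mathcal{M}^{-1}$; and $\tilde{C}$ is compact because it is the composition of the compact operator $C$ with the bounded operators $\mathcal{M}^{-1}$ and $(\mathcal{M}^{-1})^{*}$. Hence $\tilde{C}$ has a discrete spectrum accumulating only at $0$, and, with the eigenvalues enumerated in decreasing order as in the statement, the Courant--Fischer theorem gives
\[
\lambda_k=\sup_{\substack{S\subseteq V\\ \dim S=k}}\ \inf_{0\neq v\in S}\frac{\langle Cv,v\rangle_V}{\|v\|_V^{2}},
\qquad
\mu_k=\sup_{\substack{T\subseteq H\\ \dim T=k}}\ \inf_{0\neq h\in T}\frac{\langle \tilde{C} h,h\rangle_H}{\|h\|_H^{2}}.
\]

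Next I would change variables by $v=\mathcal{M}^{-1}h$. Because $\langle\tilde{C} h,h\rangle_H=\langle C\mathcal{M}^{-1}h,\mathcal{M}^{-1}h\rangle_V$ and $\mathcal{M}^{-1}\colon H\to V$ is a linear bijection carrying $k$-dimensional subspaces onto $k$-dimensional subspaces, the formula for $\mu_k$ rewrites as $\mu_k=\sup_{\dim S=k}\inf_{0\neq v\in S}\langle Cv,v\rangle_V/\|\mathcal{M}v\|_H^{2}$. I would then invoke the elementary inequalities $\|v\|_V\le\|\mathcal{M}^{-1}\|\,\|\mathcal{M}v\|_H$ and $\|\mathcal{M}v\|_H\le\|\mathcal{M}\|\,\|v\|_V$, which imply that on any vector with $\langle Cv,v\rangle_V\ge 0$,
\[
\frac{1}{\|\mathcal{M}\|^{2}}\cdot\frac{\langle Cv,v\rangle_V}{\|v\|_V^{2}}\ \le\ \frac{\langle Cv,v\rangle_V}{\|\mathcal{M}v\|_H^{2}}\ \le\ \|\mathcal{M}^{-1}\|^{2}\cdot\frac{\langle Cv,v\rangle_V}{\|v\|_V^{2}}.
\]
To get $\lambda_k\le\|\mathcal{M}\|^{2}\mu_k$ I would test the supremum for $\mu_k$ with $S^{\circ}=\operatorname{span}\{e_1,\dots,e_k\}$, where $e_i$ are eigenvectors of $C$ for the $k$ largest eigenvalues; on $S^{\circ}$ one has $\langle Cv,v\rangle_V\ge\lambda_k\|v\|_V^{2}\ge 0$, so the left-hand inequality yields $\mu_k\ge\lambda_k/\|\mathcal{M}\|^{2}$. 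Symmetrically, to get $\lambda_k\ge\mu_k/\|\mathcal{M}^{-1}\|^{2}$ I would test the supremum for $\lambda_k$ with $S^{\star}=\mathcal{M}^{-1}\operatorname{span}\{f_1,\dots,f_k\}$, where $f_i$ are eigenvectors of $\tilde{C}$ for its $k$ largest eigenvalues; for $v\in S^{\star}$, $h=\mathcal{M}v$ lies in $\operatorname{span}\{f_1,\dots,f_k\}$, so $\langle Cv,v\rangle_V=\langle\tilde{C} h,h\rangle_H\ge\mu_k\|h\|_H^{2}\ge 0$, and the right-hand inequality gives the bound. The degenerate cases $\lambda_k=0$ or $\mu_k=0$ are consistent with both estimates, since the above shows $\mu_k>0\Rightarrow\lambda_k>0$ and, by the symmetric identity $C=\mathcal{M}^{*}\tilde{C}\mathcal{M}$, also $\lambda_k>0\Rightarrow\mu_k>0$.

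The only genuinely delicate point is the sign requirement $\langle Cv,v\rangle_V\ge 0$ needed for the two-sided Rayleigh-quotient comparison; this is precisely why the test subspaces must be taken to be spans of the leading eigenvectors, on which the quadratic form is bounded below by a nonnegative eigenvalue. If one wishes to cover a genuinely indefinite self-adjoint $C$, the same reasoning applied separately to the positive part $C^{+}$ and to $-C^{-}$ gives the matching decay of the positive and of the negative eigenvalues. Everything else amounts to routine bookkeeping with the variational formulas, so I would expect no further obstacle.
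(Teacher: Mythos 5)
Your proof is correct and follows essentially the same route as the paper's: the Courant--Fischer max--min characterisation combined with the two-sided norm comparison $\|v\|_V/\|\mathcal{M}^{-1}\| \le \|\mathcal{M}v\|_H \le \|\mathcal{M}\|\,\|v\|_V$ under the change of variables $h=\mathcal{M}v$. The only minor difference is that you test the variational formulas with explicit leading-eigenvector subspaces so that the quadratic form is manifestly nonnegative there, whereas the paper compares the full max--min expressions directly, implicitly relying on the nonnegativity of $C$ built into the ordering $\lambda_1\ge\lambda_2\ge\cdots\ge 0$.
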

\begin{proof} 
Recall that by the Courant--Fischer Theorem, the eigenvalues in decreasing order obey the following variational characterization:
%which we quote here for the convenience of the reader
%\begin{quote}
%\textsc{Theorem (Courant-Fischer).} Let $C:V\to V$ be a selfadjoint and compact operator. Then the eigenvalues of $C$ fulfill
\begin{equation*}
\lambda_k = \max \{ \min\{(Cx,x)_V: x\in S_k, \|x\|=1\}: \dim (S_k)=k, S_k \mbox{ subspace of V}\}. 
\end{equation*}
%\end{quote}
From this characterization, we obtain
\begin{align*}
\lambda_k %&= \max \{ \min \{ (Cx,x)_V : x \in S_k, \|x\|=1 \}: \dim (S_k)=k, S_k \mbox{ subspace of }V \} = \\
=&\, \max_{\dim(S_k)=k} \min_{x \in S_k, \|x\| = 1} (Cx,x)_V \\
=&\,  \max_{\dim(S_k)=k} \min_{x \in S_k, \|x\|=1} ((\mathcal{M}^{-1})^*C\mathcal{M}^{-1}\, \mathcal{M}x, \mathcal{M}x)_V  \\
=&\,\max_{\dim(S_k)=k} \min_{x \in S_k, \hat{x} = \mathcal{M}x / \|\mathcal{M}x\|, \|x\|=1} (\tilde{C}\hat{x}, \hat{x})_H \|\mathcal{M}x\|^2 \\
\geq&\,\frac{1}{\|\mathcal{M}^{-1}\|}
\max_{\dim(S_k)=k} \min_{x \in S_k, \hat{x} = \mathcal{M}x / \|\mathcal{M}x\|, \|x\|=1} (\tilde{C}\hat{x}, \hat{x})_H=
\ (\star),
\end{align*}
using $\|\mathcal{M}x\|\geq\frac{1}{\|\mathcal{M}^{-1}\|}\|x\|=\frac{1}{\|\mathcal{M}^{-1}\|}$.
Due to the fact that
\begin{equation*}
\hat{x} = \frac{\mathcal{M}x}{\|\mathcal{M}x\|} \in \hat{S}_k = \mathcal{M}S_k,
\end{equation*}
and the dimension of $S_k$ being $k$, due to regularity of $\mathcal{M}$, $\hat{S}_k$ is of dimension $k$ as well. Therefore, taking the minimum over a superset by dropping the constraint $\|x\|=1$ results in
\begin{equation*}
 (\star) \geq \frac{1}{\|\mathcal{M}^{-1}\|^2} \max_{\dim(\hat{S}_k)=k} \min_{ \hat{x} \in \hat{S}_k, \|\hat{x}\| =1}(\tilde{C}\hat{x},\hat{x})_H = \frac{1}{\|\mathcal{M}^{-1}\|^2} \mu_k.
\end{equation*}
Analogously, it holds
\begin{equation*}
\mu_k  \geq \frac{1}{\|(\mathcal{M}^{-1})^{-1}\|^2} \lambda_k = \frac{1}{\|\mathcal{M}\|^2} \lambda_k,
\end{equation*}
which concludes the proof.
\end{proof}    

Proof of Lemma ~\ref{lemGprojg}.
\begin{proof}
The first estimate in \eqref{stability bounds grpojection} follows by testing \eqref{GalProj} with $v^n=\gcoefprojection$ and using the Cauchy--Schwarz inequality as well as the estimate 
\[
\|\nabla \gfunc(\bfun)\|_{\Ltwo}\leq \|\nabla\nabla\ln\romean\|_{\Lthree}\|\bfun\|_{\Lsix} + \|\nabla\ln\romean\|_{\Linf}\|\nabla\bfun\|_{\Ltwo},
\]
where $\nabla\nabla$ denotes the Hessian. For the second and third bounds in \eqref{stability bounds grpojection}, we recall the definition of the fractional power of a symmetric nonnegative  operator $A$ with eigensystem $\{(\lambda_i,\wi)\}_{i \geq 1}$ as 
\begin{equation} \label{def Agamma}
A^\gamma v=\sum_{i\in\mathbb{N}}\lambda_i^\gamma (v,\wi).
\end{equation}
We also note that
\[
\begin{aligned}
	\|(-\Delta_N)^\gamma\gcoefprojection\|_{\Ltwo}
	\leq \|\romean\|_{\Linf}^\gamma\|(-\Deltaromean)^\gamma\gcoefprojection\|_{\Ltwo}, \quad \gamma\in \left\{\frac{y}{4},\frac{y+1}{4}\right\}.
\end{aligned}
\]
We test \eqref{GalProj} with 
\[
v^n=(-\Deltaromean)^{\gamma-\frac12}\gcoefprojection \in \Wn \quad \text{for} \ \, \gamma\in \left\{\frac{y}{4},\frac{y+1}{4}\right\}
\]
 to obtain 
\[
\|(-\Deltaromean)^\gamma\gcoefprojection\|_{\Ltwo}\leq \|(-\Deltaromean)^\gamma\gcoef\|_{\Ltwo}.
\]
Then we make use of Lemma~\ref{lem:eigvaltransf} with $V=H=\Ltwo$, $C=(-\Deltaromean)^{-1}$, $\tilde{C}=(-\Delta_N)^{-1}$, $\mathcal{M}=(-\Delta_N)^{1/2}(-\Deltaromean)^{-1/2}$, and
\[
	\begin{aligned}
\|\mathcal{M}\|
=&\,\sup_{v\in \Ltwo\setminus\{0\}}\frac{\|(-\Delta_N)^{1/2}(-\Deltaromean)^{-1/2}v\|_{\Ltwo}}{\|v\|_{\Ltwo}}\\
=&\,\sup_{w\in H^1_\diamondsuit(\Omega)\setminus\{0\}}\frac{\|(-\Delta_N)^{1/2}w\|_{\Ltwo}}{\|(-\Deltaromean)^{1/2}w\|_{\Ltwo}}\\
=&\, \sup_{w\in H^1_\diamondsuit(\Omega)\setminus\{0\}}\frac{\|\nabla w\|_{\Ltwo}}{\|\sqrt{\romeaninv}\nabla w\|_{\Ltwo}}
\leq \|\romean\|_{\Linf},
	\end{aligned}
\]
where $H^1_\diamondsuit(\Omega)$ denotes the space of zero mean functions in $H^1(\Omega)$. Likewise 
$\|\mathcal{M}^{-1}\|\leq \|\romeaninv\|_{\Linf}$. Using \eqref{def Agamma}, we thus obtain
\[
\begin{aligned}
\|(-\Deltaromean)^\gamma\gcoefprojection\|_{\Ltwo}
	\leq \|(-\Deltaromean)^\gamma\gcoef\|_{\Ltwo}.
\end{aligned}
\]
Combining the bounds leads to
\[
	\begin{aligned}
\|(-\Delta_N)^\gamma\gcoefprojection\|_{\Ltwo}
&\leq \|\romean\|_{\Linf}^\gamma\|(-\Deltaromean)^\gamma\gcoefprojection\|_{\Ltwo}\\
&\leq \|\romean\|_{\Linf}^\gamma\|(-\Deltaromean)^\gamma\gcoef\|_{\Ltwo}
\leq \|\romean\|_{\Linf}^\gamma \left\|\romeaninv\right\|_{\Linf}^\gamma\|(-\Delta_N)^\gamma\gcoef\|_{\Ltwo}.
	\end{aligned}
\]
Finally, we apply the Kato--Ponce-type estimate to further infer
\[
\|(-\Delta_N)^\gamma\gcoef\|_{\Ltwo}=\|(-\Delta_N)^\gamma[\nabla\ln\romean\cdot\bfun]\|_{\Ltwo}\lesssim 
\|\nabla\ln\romean\|_{\Linf} \|\bfun\|_{H^{2\gamma}(\Omega)}
+\|\nabla\ln\romean\|_{H^{2\gamma}(\Omega)} \|\bfun\|_{\Linf},
\]
from which then the second and third estimates in \eqref{stability bounds grpojection} follow.
\end{proof}
\vspace*{-5mm}
\nocite{*}% Show all bib entries - both cited and uncited; comment this line to view only cited bib entries;
%\nocite{TitlesOn}
%\bibliography{wileyNJD-ACS}%

%\bibliographystyle{wileyNJD-ACS}
\bibliography{lit}

\end{document}